\theoremstyle{plain}
\newtheorem*{corollary}{Corollary}
\newtheorem{lemma}{Lemma}
\newtheorem{theorem}{Theorem}
\newtheorem*{conjecture}{Conjecture}
\theoremstyle{remark}
\newtheorem*{remark}{Remark}
\theoremstyle{definition}
\newtheorem{example}{Example}
\DeclareMathOperator{\Id}{Id}
\DeclareMathOperator{\ad}{ad}
\DeclareMathOperator{\tr}{tr}
\DeclareMathOperator{\height}{ht}
\DeclareMathOperator{\End}{End}
\DeclareMathOperator{\Aut}{Aut}
\DeclareMathOperator{\length}{length}
\DeclareMathOperator{\sign}{sign}
\DeclareMathOperator{\Alt}{Alt}
\DeclareMathOperator{\Ann}{Ann}
\DeclareMathOperator{\Hom}{Hom}
\DeclareMathOperator{\PIexp}{PIexp}
\newcommand{\hatotimes}{\mathbin{\widehat{\otimes}}}
\begin{document}

\title[Graded polynomial
identities]{Graded polynomial
identities, group actions, and exponential growth of Lie algebras}

\author{A.\,S.~Gordienko}

\address{Memorial University of Newfoundland, St. John's, NL, Canada}
\email{asgordienko@mun.ca}
\keywords{Lie algebra, polynomial identity, grading, codimension, cocharacter, symmetric group, Young diagram, group action}

\begin{abstract} Consider a finite dimensional Lie algebra $L$
with an action of a finite group~$G$ over a field of characteristic~$0$.
We prove the analog of Amitsur's conjecture
on asymptotic behavior for codimensions of polynomial $G$-identities of $L$.
As a consequence, we prove the analog of Amitsur's conjecture
for graded codimensions of any finite
dimensional Lie algebra graded by a finite Abelian group.
\end{abstract}

\subjclass[2010]{Primary 17B01; Secondary 17B70, 17B40, 20C30.}
\thanks{
Supported by post doctoral fellowship
from Atlantic Association for Research
in Mathematical Sciences (AARMS), Atlantic Algebra Centre (AAC),
Memorial University of Newfoundland (MUN), and
Natural Sciences and Engineering Research Council of Canada (NSERC)}

\maketitle

\section{Introduction}

In the 1980's, a conjecture about the asymptotic behaviour
of codimensions of ordinary polynomial identities was made
by S.A.~Amitsur. Amitsur's conjecture was proved in 1999 by
A.~Giambruno and M.V.~Zaicev~\cite[Theorem~6.5.2]{ZaiGia} for associative algebras, in 2002 by M.V.~Zaicev~\cite{ZaiLie}
 for finite dimensional Lie algebras, and in 2011 by A.~Giambruno,
 I.P.~Shestakov, M.V. Zaicev for finite dimensional Jordan and alternative
 algebras~\cite{GiaSheZai}. In 2011 the author proved its analog
 for polynomial identities of finite dimensional representations of Lie
 algebras~\cite{ASGordienko}.
  Alongside with ordinary polynomial
identities of algebras, graded polynomial identities
\cite{BahtZaiGraded, BahtZaiGradedExp}  and $G$-identities are
important too~\cite{BahtZaiSehgal, BahtGiaZai}.
Therefore the question arises whether the conjecture
holds for graded and $G$-codimensions. E.~Aljadeff,  A.~Giambruno, and D.~La~Mattina
proved~\cite{AljaGiaLa, GiaLa} the analog of Amitsur's conjecture
for codimensions of graded polynomial identities of associative algebras
graded by a finite Abelian group (or, equivalently, for codimensions
of $G$-identities where $G$ is a finite Abelian group).

This article is concerned with graded codimensions
 (Theorem~\ref{TheoremMain}) and $G$-codimensions (Theorem~\ref{TheoremMainG})
 of Lie algebras.
 
 \subsection{Graded polynomial identities and their codimensions}\label{SubsectionGraded}
Let $G$ be an Abelian group.
Denote by $L(X^{\mathrm{gr}}) $ the free $G$-graded Lie
 algebra on the countable set $X^{\mathrm{gr}}=\bigcup_{g \in G}X^{(g)}$, $X^{(g)} = \{ x^{(g)}_1,
x^{(g)}_2, \ldots \}$,
over a field $F$ of characteristic~$0$,  i.e. the algebra of Lie polynomials
 in variables from $X^{\mathrm{gr}}$. The indeterminates from $X^{(g)}$ are said to be homogeneous of degree
$g$. The $G$-degree of a monomial $[x^{(g_1)}_{i_1}, \dots, x^{(g_t)}_{i_t}] \in L(X^{\mathrm{gr}})$  (all long commutators in the article are left-normed) is defined to
be $g_1 g_2 \dots g_t$, as opposed to its total degree, which is defined to be $t$. Denote by
$L(X^{\mathrm{gr}})^{(g)}$ the subspace of the algebra $L(X^{\mathrm{gr}})$ spanned by all the monomials having
$G$-degree $g$. Notice that $[L(X^{\mathrm{gr}})^{(g)}, L(X^{\mathrm{gr}})^{(h)}] \subseteq L(X^{\mathrm{gr}})^{(gh)}$, for every $g, h \in G$. It follows that
$$L(X^{\mathrm{gr}})=\bigoplus_{g\in G} L(X^{\mathrm{gr}})^{(g)}$$ is a $G$-grading.
  Let $f=f(x^{(g_1)}_{i_1}, \dots, x^{(g_t)}_{i_t}) \in L (X^{\mathrm{gr}})$.
We say that $f$ is
a \textit{graded polynomial identity} of
 a $G$-graded Lie algebra $L=\bigoplus_{g\in G}
L^{(g)}$
and write $f\equiv 0$
if $f(a^{(g_1)}_{i_1}, \dots, a^{(g_t)}_{i_t})=0$
for all $a^{(g_j)}_{i_j} \in L^{(g_j)}$, $1 \leqslant j \leqslant t$.
  The set $\Id^{\mathrm{gr}}(L)$ of graded polynomial identities of
   $L$ is
a graded ideal of $L(X^{\mathrm{gr}})$.
The case of ordinary polynomial identities is included
for the trivial group $G=\lbrace e \rbrace$.

\begin{example}\label{ExampleIdGr}
 Let $G=\mathbb Z_2 = \lbrace \bar 0, \bar 1 \rbrace$,
$\mathfrak{gl}_2(F)=\mathfrak{gl}_2(F)^{(\bar 0)}\oplus \mathfrak{gl}_2(F)^{(\bar 1)}$
where $\mathfrak{gl}_2(F)^{(\bar 0)}=\left(
\begin{array}{cc}
F & 0 \\
0 & F
\end{array}
 \right)$ and $\mathfrak{gl}_2(F)^{(\bar 1)}=\left(
\begin{array}{cc}
0 & F \\
F & 0
\end{array}
 \right)$. Then  $[x^{(\bar 0)},y^{(\bar 0)}]
\in \Id^{\mathrm{gr}}(\mathfrak{gl}_2(F))$.
\end{example}

Let $S_n$ be the $n$th symmetric group, $n\in\mathbb N$, and
 $$V^{\mathrm{gr}}_n:=\langle [x^{(g_1)}_{\sigma(1)}, x^{(g_2)}_{\sigma(2)},
 \ldots, x^{(g_n)}_{\sigma(n)}] \mid g_i \in G, \sigma \in S_n \rangle_F.$$ The non-negative integer
 $c^{\mathrm{gr}}_n(L) := \dim
  \left(\frac {V^{\mathrm{gr}}_n}{V^{\mathrm{gr}}_n \cap
   \Id^{\mathrm{gr}}(L)}\right)$
is called the $n$th {\itshape codimension of graded polynomial identities} or the $n$th {\itshape graded codimension} of~$L$.

The analog of Amitsur's conjecture for graded codimensions can be formulated
as follows.

\begin{conjecture} There exists
 $\PIexp^{\mathrm{gr}}(L):=\lim\limits_{n\to\infty} \sqrt[n]{c^\mathrm{gr}_n(L)} \in \mathbb Z_+$.
\end{conjecture}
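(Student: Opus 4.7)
The plan is to reduce the graded statement to a $G$-equivariant version and then prove that version directly, mimicking Zaicev's strategy~\cite{ZaiLie} adapted to the $G$-equivariant setting.

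First I would establish the equivalence between the $G$-graded and the $\hat G$-action settings. Over a field $F$ of characteristic $0$ containing a primitive $|G|$-th root of unity, gradings of a finite-dimensional Lie algebra $L$ by a finite Abelian group $G$ correspond naturally to actions of the dual group $\hat G$; the homogeneous components are precisely the common eigenspaces. Under this dictionary, graded polynomial identities translate into $\hat G$-identities, and the sequences $c_n^{\mathrm{gr}}(L)$ and $c_n^{\hat G}(L)$ can be compared by polynomial factors (each multilinear graded monomial is specified by assigning a group element to every variable, which is exactly the type of data encoded in the wreath-product action relevant for $G$-identities). Since multiplication by a polynomial in $n$ does not affect $\limsup\sqrt[n]{\cdot}$, it suffices to prove Theorem~\ref{TheoremMainG} for finite (Abelian) $G$, from which Theorem~\ref{TheoremMain} will follow. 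If the base field is not large enough, a standard base-change argument shows that codimensions are unchanged upon extension.

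Next I would develop the structure theory needed for the $G$-equivariant bounds. Every finite-dimensional $G$-invariant Lie algebra $L$ admits a $G$-invariant Levi-type decomposition $L = B \oplus N$ where $N$ is the solvable radical and $B$ is a $G$-invariant maximal semisimple subalgebra; $B$ in turn decomposes into $G$-simple components. The candidate integer for $\PIexp^{\mathrm{gr}}(L)$ will be
\[
d := \max\bigl\{\dim(B_1 \oplus \cdots \oplus B_r)\bigr\},
\]
the maximum taken over tuples of distinct $G$-simple components whose ``alternating product'' with the $N$-module structure does not vanish identically. For the upper bound I would decompose the cocharacter of $L$ under the action of $S_n$ (or a suitable wreath product $G \wr S_n$), partition-by-partition, and apply Regev/Giambruno--Zaicev-type hook estimates together with a bound on multiplicities coming from the fact that $L$ is finite-dimensional; standard Young-tableau combinatorics give $c_n^{G}(L) \le C n^s d^n$ for suitable constants. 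For the lower bound I would produce, for each $n$, an explicit multilinear $G$-polynomial that alternates in $d$ groups of variables evaluated in the $B_i$, together with ``coupling'' variables evaluated in $N$ that witness non-vanishing; counting these yields $c_n^G(L) \ge C' n^{t} d^n$.

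The main obstacle is the lower bound: one must produce enough multilinear non-identities to match the exponent coming from the upper estimate. The construction requires simultaneously (i) choosing the correct subsystem $B_1,\dots,B_r$ of $G$-simple components that genuinely contributes (equivalently, whose ``product'' is non-nilpotent modulo identities), (ii) building alternating polynomials in the spirit of the Giambruno--Zaicev method but respecting the $G$-action, which must be done in the Lie, not associative, setting where the classical trace trick is not available, and (iii) verifying non-vanishing by evaluating on a specific tuple constructed from a Cartan-like basis of each $B_i$. Once these ingredients are in place, combining the polynomial-order upper and lower bounds gives
\[
\lim_{n\to\infty}\sqrt[n]{c_n^{\mathrm{gr}}(L)} = d \in \mathbb Z_+,
\]
proving the conjecture.
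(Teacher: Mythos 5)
Your high-level reduction (graded $\leftrightarrow$ $\widehat G$-action duality, then mimic Zaicev for $G$-actions, with a polynomially-controlled cocharacter upper bound and an alternating-polynomial lower bound) is exactly what the paper does. But your candidate formula for the exponent,
\[
d := \max\bigl\{\dim(B_1 \oplus \cdots \oplus B_r)\bigr\},
\]
built only from $G$-simple components of the maximal semisimple subalgebra, is wrong, and this is not a cosmetic slip — it invalidates the lower-bound plan. The correct invariant in the paper is
\[
d(L) = \max \dim\frac{L}{\Ann(I_1/J_1) \cap \cdots \cap \Ann(I_r/J_r)},
\]
and via Lemma~\ref{LemmaChooseReduct} this dimension is realised as $\sum_k \dim(B_k \oplus \tilde R_k)$ where the $\tilde R_k$ live inside a $G$-invariant complement $S$ of the nilpotent radical $N$ in the \emph{solvable} radical $R$ (so $L = B \oplus S \oplus N$ with $[B,S]=0$). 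The ``toral'' piece $S$ contributes to the exponent and cannot be dropped. Example~\ref{ExampleMetabelian} in the paper is a solvable (indeed metabelian) Lie algebra, so $B = 0$; your formula gives $d=0$, yet $c_n^{G}(L)$ grows like $m^n$ with $m = \max_i|O_i|$ arbitrarily large. You also conflate $R$ with $N$ by writing ``$L = B \oplus N$ where $N$ is the solvable radical''; the distinction between the solvable radical and the nilpotent radical, and the existence of the intermediate $G$-invariant $S$ with $[B,S]=0$ (Lemma~\ref{LemmaRS}), is structurally essential.

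Two smaller issues. First, the paper's Lemma~\ref{LemmaGradAction} gives an exact equality $c_n^{\widehat G}(L) = c_n^{\mathrm{gr}}(L)$, not merely comparability up to polynomial factors; this is cleaner and avoids any loss. Second, your remark that ``the classical trace trick is not available'' in the Lie setting is inaccurate for this proof: the paper's Theorem~\ref{TheoremAlternateFinal} works with \emph{associative} $G$-polynomials evaluated in $\End_F(M)$ and uses the nondegeneracy of the Killing form $\tr(\ad x\, \ad y)$ of the semisimple part precisely via such trace computations; the Lie-theoretic difficulty is resolved by building the alternating polynomials in the associative enveloping picture (Lemmas~\ref{LemmaS}, \ref{LemmaAlternateFirst}, Theorem~\ref{TheoremAlternateFinal}) and then gluing them into a Lie polynomial (Lemma~\ref{LemmaAlt}). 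To repair the proposal you would need to (i) replace your $d$ by the annihilator-intersection definition, allowing contributions from $S$, not only from $B$; (ii) prove the $G$-equivariant analogues of the splitting $L = B\oplus S\oplus N$ and of the annihilator decomposition (Lemmas~\ref{LemmaRS}, \ref{LemmaIrrAnnBS}); and (iii) construct the alternating non-identity on faithful irreducible modules over \emph{reductive} (not merely semisimple) $G$-subalgebras $(\ad B_k)\oplus R_k$, handling the central part with a separate alternating device (Lemma~\ref{LemmaS}).
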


\begin{remark}
I.B.~Volichenko~\cite{Volichenko} gave an example
of an infinite dimensional Lie algebra~$L$ with
a non-trivial polynomial identity for which the
 growth of codimensions~$c_n(L)$ of ordinary polynomial identities
 is overexponential.
 M.V.~Zaicev and S.P.~Mishchenko~\cite{ZaiMishch, VerZaiMishch}
  gave an example
of an infinite dimensional Lie algebra~$L$
 with a non-trivial polynomial identity
 such that
there exists fractional
  $\PIexp(L):=\lim\limits_{n\to\infty} \sqrt[n]{c_n(L)}$.
\end{remark}

\begin{theorem}\label{TheoremMain}
Let $L$ be a finite dimensional non-nilpotent Lie algebra
over a field $F$ of characteristic $0$, graded by a finite Abelian group $G$. Then
there exist constants $C_1, C_2 > 0$, $r_1, r_2 \in \mathbb R$, $d \in \mathbb N$
such that $C_1 n^{r_1} d^n \leqslant c^{\mathrm{gr}}_n(L) \leqslant C_2 n^{r_2} d^n$
for all $n \in \mathbb N$.
\end{theorem}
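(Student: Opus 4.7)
My plan is to deduce Theorem~\ref{TheoremMain} from Theorem~\ref{TheoremMainG} via Pontryagin duality. After extending scalars from $F$ to its algebraic closure (which leaves graded codimensions and the hypothesis of non-nilpotence unchanged, by the standard Vandermonde argument on multilinear components), I may assume that $F$ contains $|G|$-th roots of unity. Then the character group $\hat G := \Hom(G, F^{\times})$ is finite Abelian of the same order as $G$, and the $G$-grading $L = \bigoplus_{g \in G} L^{(g)}$ is equivalent to an action of $\hat G$ on $L$ by automorphisms, defined by $\chi \cdot a = \chi(g)\, a$ for $a \in L^{(g)}$; the inverse correspondence recovers the $g$-component as the common $\chi(g)$-eigenspace of all $\chi \in \hat G$.

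The key technical step is to upgrade this structural equivalence to an identification of codimensions. I would construct a linear isomorphism between the graded multilinear space $V_n^{\mathrm{gr}}$ and the analogous $\hat G$-multilinear space (spanned by $[\chi_1 x_{\sigma(1)}, \ldots, \chi_n x_{\sigma(n)}]$ with $\chi_i \in \hat G$ and $\sigma \in S_n$) by the tensorised discrete Fourier transform that sends $x_{\sigma(i)}^{(g_i)}$ to $|\hat G|^{-1} \sum_{\chi \in \hat G} \overline{\chi(g_i)}\, \chi x_{\sigma(i)}$. Orthogonality of characters shows this map is a bijection, and the relation $\chi \cdot a = \chi(g) a$ for $a \in L^{(g)}$ shows that evaluations of graded polynomials on $L$ correspond precisely to evaluations of the transformed $\hat G$-polynomials on $L$. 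Hence $V_n^{\mathrm{gr}} \cap \Id^{\mathrm{gr}}(L)$ is carried bijectively onto $V_n^{\hat G} \cap \Id^{\hat G}(L)$, and $c_n^{\mathrm{gr}}(L) = c_n^{\hat G}(L)$ for every $n$. Theorem~\ref{TheoremMainG}, applied to $L$ as a $\hat G$-algebra, then supplies constants $C_1, C_2, r_1, r_2$ and an integer $d$ such that $C_1 n^{r_1} d^n \leqslant c_n^{\hat G}(L) \leqslant C_2 n^{r_2} d^n$, and combined with the equality above this yields the theorem.

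The main obstacle lies not in the reduction itself but inside Theorem~\ref{TheoremMainG}: the substantive PI-theoretic work there will involve the representation theory of $S_n \wr \hat G$-modules on multilinear cocharacters, a $\hat G$-equivariant Wedderburn decomposition of a Levi subalgebra of $L$, and identification of $d$ as a concrete combinatorial invariant of that decomposition. A secondary bookkeeping point in the reduction is to verify that ordinary non-nilpotence of the underlying Lie algebra $L$ (the hypothesis of Theorem~\ref{TheoremMain}) is sufficient for $L$ to satisfy the hypothesis of Theorem~\ref{TheoremMainG} under which $d \geqslant 1$, so that the lower bound is non-trivial.
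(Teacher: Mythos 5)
Your proposal is correct and follows essentially the same route as the paper: the paper extends scalars to an algebraically closed field, then invokes Lemma~\ref{LemmaGradAction}, whose proof uses the isomorphism $\varepsilon \colon L(X \mid \widehat G) \to L(X^{\mathrm{gr}})$ with $\varepsilon((x_j)^{e_g}) = x_j^{(g)}$, where $e_g = \frac{1}{|G|}\sum_\psi \psi(g)^{-1}\psi$; this is precisely the discrete Fourier transform you describe, so $c_n^{\mathrm{gr}}(L) = c_n^{\widehat G}(L)$ and Theorem~\ref{TheoremMainG} finishes the argument. Your closing bookkeeping concern is also resolved immediately: non-nilpotence of $L$ as a Lie algebra is exactly the hypothesis of Theorem~\ref{TheoremMainG} as well, so no further check is needed.
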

\begin{corollary}
The above analog of Amitsur's conjecture holds for such codimensions.
\end{corollary}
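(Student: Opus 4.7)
The plan is to deduce the corollary directly from the bounds supplied by Theorem~\ref{TheoremMain}, separating the nilpotent case (which the theorem excludes) as a trivial sub-case. The whole argument reduces to taking $n$th roots of a sandwich inequality and observing that polynomial prefactors contribute a factor tending to $1$.

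First I would dispose of the nilpotent case. If $L$ is nilpotent of class $c$, then every left-normed commutator of length $n > c$ vanishes on $L$ regardless of how the entries are homogenized. Hence $V_n^{\mathrm{gr}} \subseteq \Id^{\mathrm{gr}}(L)$ for all $n > c$, so $c_n^{\mathrm{gr}}(L) = 0$ for such $n$ and $\lim_{n\to\infty} \sqrt[n]{c_n^{\mathrm{gr}}(L)} = 0 \in \mathbb{Z}_+$.

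Now assume $L$ is non-nilpotent. By Theorem~\ref{TheoremMain} there exist constants $C_1, C_2 > 0$, $r_1, r_2 \in \mathbb{R}$, and $d \in \mathbb{N}$ such that
\[
C_1 n^{r_1} d^n \leqslant c_n^{\mathrm{gr}}(L) \leqslant C_2 n^{r_2} d^n \qquad \text{for all } n \in \mathbb{N}.
\]
(Note that $d \geqslant 1$, since otherwise the lower bound would force $c_n^{\mathrm{gr}}(L) \to 0$, contradicting the existence of at least one non-zero multilinear graded commutator when $L$ is not nilpotent.) Taking $n$th roots gives
\[
\sqrt[n]{C_1}\, n^{r_1/n}\, d \;\leqslant\; \sqrt[n]{c_n^{\mathrm{gr}}(L)} \;\leqslant\; \sqrt[n]{C_2}\, n^{r_2/n}\, d.
\]
Since $\sqrt[n]{C_i} \to 1$ and $n^{r_i/n} = \exp(r_i \ln n / n) \to 1$ as $n \to \infty$, the squeeze theorem yields
\[
\lim_{n \to \infty} \sqrt[n]{c_n^{\mathrm{gr}}(L)} = d \in \mathbb{N} \subseteq \mathbb{Z}_+.
\]

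There is essentially no obstacle here; the corollary is a purely elementary consequence of the quantitative sandwich in Theorem~\ref{TheoremMain}. The only point requiring a sentence of justification is the integrality of the exponent in the nilpotent case (handled by the zero codimension observation) and the non-negativity of $d$ in the non-nilpotent case (forced by the lower bound together with the non-vanishing of $c_n^{\mathrm{gr}}(L)$).
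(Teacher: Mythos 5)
Your argument is correct and is exactly the deduction the paper intends: the corollary is left as an immediate consequence of the sandwich bounds in Theorem~\ref{TheoremMain}, with the nilpotent case disposed of by the adjacent remark that $c_n^{\mathrm{gr}}(L)=0$ eventually. Your extra justification that $d\geqslant 1$ is harmless but unnecessary, since the theorem already asserts $d\in\mathbb N$.
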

\begin{remark}
If $L$ is nilpotent, i.e. $[x_1, \ldots, x_p]\equiv 0$ for some $p\in\mathbb N$, then  $V^{\mathrm{gr}}_n \subseteq \Id^{\mathrm{gr}}(L)$ and $c^{\mathrm{gr}}_n(L)=0$ for all $n \geqslant p$.
\end{remark}

Theorem~\ref{TheoremMain} will be obtained as a consequence of Theorem~\ref{TheoremMainG}
in Subsection~\ref{SubsectionDuality}.

\subsection{Polynomial $G$-identities and their codimensions}\label{SubsectionG}
Analogously, one can consider polynomial $G$-identities
for any group $G$.
We use the exponential notation
  for the action of a group and its group algebra.
  We say that a Lie algebra $L$ is a \textit{Lie algebra with $G$-action}
  or a \textit{Lie $G$-algebra}
   if there is a fixed linear representation $G \to
  \mathrm{GL}(L)$ such that $[a,b]^g=[a^g,b^g]$ for all $a,b \in L$ and $g \in G$.
 Denote by $L( X | G)$
the free Lie algebra over $F$ with free formal
 generators $x^g_j$, $j\in\mathbb N$,
 $g \in G$.
  Define $(x^g_j)^h := x^{hg}_j$
 for $h \in G$.
 Let $X := \lbrace x_1, x_2, x_3, \ldots \rbrace$
 where $x_j := x_j^1$, $1\in G$.
 Then $L( X | G)$ becomes the free $G$-algebra with
 free generators $x_j$, $j \in \mathbb N$.
 Let $L$ be a Lie $G$-algebra over $F$. A polynomial
  $f(x_1, \ldots, x_n)\in L( X|G)$
 is a \textit{$G$-identity} of $L$ if $f(a_1, \ldots, a_n)=0$
for all $a_i \in L$. The set $\Id^{G}(L)$ of all $G$-identities
of $L$ is an ideal in $L( X | G)$ invariant under $G$-action.

\begin{example}\label{ExampleIdG} Consider $\psi \in \Aut(\mathfrak{gl}_2(F))$
defined by the formula $$\left(
\begin{array}{cc}
a & b \\
c & d
\end{array}
 \right)^\psi := \left(
\begin{array}{rr}
a & -b \\
-c & d
\end{array}
 \right).$$
Then $[x+x^{\psi},y+y^{\psi}]\in \Id^{G}(\mathfrak{gl}_2(F))$
where
$G=\langle \psi \rangle \cong \mathbb Z_2$.
\end{example}

Denote by $V^G_n$ the space of all multilinear $G$-polynomials
in $x_1, \ldots, x_n$, i.e.
$$V^{G}_n = \langle [x^{g_1}_{\sigma(1)},
x^{g_2}_{\sigma(2)},\ldots, x^{g_n}_{\sigma(n)}]
\mid g_i \in G, \sigma\in S_n \rangle_F.$$
Then the number $c^G_n(L):=\dim\left(\frac{V^G_n}{V^G_n \cap \Id^G(L)}\right)$
is called the $n$th \textit{codimension of polynomial $G$-identities} 
or the $n$th \textit{$G$-codimension} of $L$.

\begin{remark}
As in the case of associative algebras~\cite[Lemma~10.1.3]{ZaiGia},
 we have
$$c_n(L) \leqslant c_n^G(L) \leqslant |G|^n c_n(L).$$
 Here
$c_n(L)=c_n^{\{e\}}(L)$ are ordinary codimensions.
\end{remark}

Also we have the following upper bound:

\begin{lemma}\label{LemmaCodimDim}
Let $L$ be a finite dimensional Lie algebra with $G$-action
over any field $F$ and let $G$ be any group. Then
$c_n^G(L) \leqslant (\dim L)^{n+1}$.
\end{lemma}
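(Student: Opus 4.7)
The plan is to realize the quotient $V_n^G/(V_n^G\cap \Id^G(L))$ as a subspace of the space of multilinear maps from $L^n$ to $L$, and then just count dimensions.

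Concretely, I would define an evaluation map
$$\varphi\colon V_n^G \longrightarrow \Hom_F(L^{\otimes n}, L), \qquad \varphi(f)(a_1,\ldots,a_n) := f(a_1,\ldots,a_n),$$
where, in computing $f(a_1,\ldots,a_n)$, each formal generator $x_i^{g}$ appearing in $f$ is replaced by $a_i^{g}$ (using the fixed $G$-action on $L$). Since every element of $V_n^G$ is a linear combination of left-normed commutators in which each $x_i$ (with some exponent $g_i \in G$) appears exactly once, the expression $f(a_1,\ldots,a_n)$ is $F$-multilinear in $(a_1,\ldots,a_n)$, so $\varphi(f)$ is a well-defined element of $\Hom_F(L^{\otimes n}, L)$, and $\varphi$ is $F$-linear.

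The key point is to identify $\ker \varphi$ with $V_n^G\cap \Id^G(L)$. By definition, $f\in \Id^G(L)$ iff $f(a_1,\ldots,a_n)=0$ for all $a_i\in L$, so $V_n^G\cap \Id^G(L)\subseteq \ker\varphi$ is immediate. Conversely, if $f\in V_n^G$ with $\varphi(f)=0$, then the multilinearity of the evaluation implies that $f$ vanishes on all substitutions from $L$, hence $f\in \Id^G(L)$.

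Consequently $\varphi$ induces an injection $V_n^G/(V_n^G\cap \Id^G(L)) \hookrightarrow \Hom_F(L^{\otimes n}, L)$, so
$$c_n^G(L) \;\leqslant\; \dim_F \Hom_F(L^{\otimes n}, L) \;=\; (\dim L)^{n+1}.$$
There is no real obstacle here; the only thing to check carefully is that substituting $a_i^{g_i}$ for $x_i^{g_i}$ is a genuinely $F$-multilinear operation in the arguments $a_1,\ldots,a_n$, which is clear because each $x_i$ occurs linearly in every monomial of $V_n^G$ and the $G$-action on $L$ is $F$-linear.
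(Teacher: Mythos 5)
Your argument is exactly the paper's: both realize $V_n^G/(V_n^G\cap\Id^G(L))$ inside $\Hom_F(L^{\otimes n},L)$ via the evaluation map and then count dimensions. You simply spell out the multilinearity and the identification of the kernel, which the paper leaves implicit.
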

\begin{proof}
Consider $G$-polynomials as $n$-linear maps from $L$ to $L$.
Then we have a natural map $V^{G}_n \to \Hom_{F}(L^{{}\otimes n}; L)$
with the kernel $V^{G}_n \cap \Id^G(L)$
that leads to the embedding $$\frac{V^{G}_n}{V^{G}_n \cap \Id^G(L)}
\hookrightarrow \Hom_{F}(L^{{}\otimes n}; L).$$
Thus $$c^G_n(L)=\dim \left(\frac{V^{G}_n}{V^{G}_n \cap \Id^G(L)}\right)
\leqslant \dim \Hom_{F}(L^{{}\otimes n}; L)=(\dim L)^{n+1}.$$
\end{proof}

The analog of Amitsur's conjecture for $G$-codimensions can be formulated
as follows.

\begin{conjecture} There exists
 $\PIexp^G(L):=\lim\limits_{n\to\infty} \sqrt[n]{c^G_n(L)} \in \mathbb Z_+$.
\end{conjecture}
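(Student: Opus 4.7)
The plan is to establish matching polynomial--exponential bounds
\[
C_1 n^{r_1} d^n \leqslant c_n^G(L) \leqslant C_2 n^{r_2} d^n
\]
for some $d \in \mathbb Z_+$, from which the existence of $\PIexp^G(L) = d$ asserted by the conjecture is immediate. The nilpotent case gives $c_n^G(L)=0$ eventually and $d=0$, so I assume $L$ is non-nilpotent. First I reduce to the case of an algebraically closed base field (scalar extensions do not change codimensions) and invoke a $G$-invariant Levi--Mal'cev decomposition $L = B \oplus R$ of vector spaces, where $B$ is a maximal semisimple $G$-subalgebra and $R = \mathrm{Rad}(L)$. Such a decomposition exists since $G$ is finite and $\mathrm{char}(F)=0$: complete reducibility of the $G$-action lets me average a fixed Levi complement into a $G$-stable one. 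I then decompose $B = B_1 \oplus \cdots \oplus B_q$ into minimal $G$-invariant semisimple ideals (each a sum of simple ideals permuted transitively by $G$) and define the candidate exponent $d$ in the spirit of Zaicev as the maximum of $\dim(B_{i_1} \oplus \cdots \oplus B_{i_s})$ over subsets for which there exist $r_k \in R$ and $a_k \in B_{j_k}$ with $[r_0, a_1, r_1, \ldots, a_s, r_s] \neq 0$ for some permutation $(j_1,\ldots,j_s)$ of $(i_1,\ldots,i_s)$.

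For the upper bound I would expand the $S_n$-cocharacter of $V_n^G / (V_n^G \cap \Id^G(L))$ into irreducibles $\chi_\lambda$ indexed by partitions $\lambda \vdash n$, control multiplicities polynomially via Lemma~\ref{LemmaCodimDim} (using that the relatively free $G$-algebra embeds into $\Hom_F(L^{\otimes n}, L)$), and then show through a tableau--trace argument that the multiplicity of $\chi_\lambda$ vanishes unless $\lambda$ is contained in a region whose hook-length estimate produces the factor $d^n$. Combining this with the polynomial multiplicity bound gives $c_n^G(L) \leqslant C_2 n^{r_2} d^n$.

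For the lower bound I would construct an explicit multilinear $G$-polynomial that is a non-identity of $L$ and whose $S_n$-decomposition forces a sufficiently wide Young diagram. Starting from a fixed non-vanishing iterated commutator $[r_0, a_1, r_1, \ldots, a_s, r_s]$ realising the definition of $d$, I would build a polynomial alternating in $k$ independent blocks of $d$ variables each, evaluated on bases of $B_{i_1} \oplus \cdots \oplus B_{i_s}$, using the non-degeneracy of the Killing form on each simple summand to ensure survival. The associated irreducible $S_{kd}$-module then has dimension of order $n^{r_1} d^n$, supplying the matching lower bound after absorbing the leftover variables from $R$.

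The main obstacle is producing this non-identity in the $G$-equivariant Lie setting: $G$-simple components of $B$ need not themselves be simple Lie algebras, and the alternating polynomial must be compatible with the $G$-action assigned to its arguments. I would overcome this by working inside the adjoint associative $G$-algebra $\ad L \subseteq \End_F(L)$, which inherits a $G$-action from $L$, and applying the associative $G$-graded analog of Amitsur's conjecture of Aljadeff, Giambruno, and La Mattina there, then lifting the resulting non-identity back to a Lie $G$-polynomial via the faithfulness of the adjoint representation modulo the centre. The centre contributes only to the polynomial prefactors $n^{r_1}, n^{r_2}$ and not to the exponent $d$, so the main estimate is unaffected.
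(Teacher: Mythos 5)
Your overall plan is sound in outline---reduce to algebraically closed base field, take a $G$-invariant Levi decomposition, identify a candidate integer exponent $d$, prove the upper bound through a cocharacter/multiplicity analysis, and construct an alternating non-identity for the lower bound---and this matches the architecture of the paper (Theorem~\ref{TheoremMainG}). But there is a genuine gap in the central step: your candidate formula for $d$ is wrong.

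You define $d$ as $\max\dim(B_{i_1}\oplus\cdots\oplus B_{i_s})$ over $G$-minimal semisimple components linked by non-vanishing bracket chains through the radical. This is the Giambruno--Zaicev exponent formula for \emph{associative} algebras, transplanted into the Lie setting; it does not work here. The paper's $d(L)$ is $\max\dim\bigl(L/\bigcap_k\Ann(I_k/J_k)\bigr)$ over $G$-invariant ideal pairs $J_k\subseteq I_k$ with $I_k/J_k$ irreducible as $L$-module with $G$-action and satisfying a non-vanishing gluing condition. These are not the same: the image of $L$ in $\mathfrak{gl}(I_k/J_k)$ is reductive, and its centre can contribute to the exponent even when $B=0$. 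Concretely, Example~\ref{ExampleMetabelian} in the paper gives a solvable (hence $B=0$) but non-nilpotent Lie algebra with $\mathbb{Z}_m$-action for which $\PIexp^G(L)=m$, while the ordinary $\PIexp(L)=1$. Your formula yields $d=0$ for any solvable $L$ regardless of $G$, so it cannot produce the correct exponent, and the matching bounds you set out to prove would be false with that $d$.

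Two further issues are smaller but worth flagging. First, you propose to bound multiplicities polynomially ``via Lemma~\ref{LemmaCodimDim},'' but that lemma only yields the exponential total bound $c_n^G(L)\leqslant(\dim L)^{n+1}$; a polynomial bound on $\sum_\lambda m(L,G,\lambda)$ requires a separate argument. The paper does this in Theorem~\ref{TheoremMult} by restricting $\chi_{n|G|}(L)$ to subgroups of $S_{n|G|}$ and invoking the Littlewood--Richardson rule (Lemmas~\ref{LemmaEmb}--\ref{LemmaIrrTensor}). Second, your plan for the lower bound---invoke Aljadeff--Giambruno--La~Mattina for the associative algebra generated by $\ad L$ and then ``lift'' back to a Lie $G$-identity---is not justified: a multilinear non-identity of $\ad L$ as an associative $G$-algebra need not arise from a Lie $G$-polynomial evaluated on $L$, and the associative and Lie $G$-exponents of the relevant algebras differ in general. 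The paper instead builds the alternating polynomial by hand from Regev's central polynomial applied to irreducible modules over reductive $G$-subalgebras chosen to realize $d(L)$ (Theorem~\ref{TheoremAlternateFinal} and Lemmas~\ref{LemmaChooseReduct}--\ref{LemmaAlt}), and this construction is essential precisely because the reduction to associative codimensions is unavailable.
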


 \begin{theorem}\label{TheoremMainG}
Let $L$ be a finite dimensional non-nilpotent Lie algebra
over a field $F$ of characteristic $0$. Suppose a finite
  group $G$ not necessarily
Abelian acts on $L$. Then there exist
 constants $C_1, C_2 > 0$, $r_1, r_2 \in \mathbb R$,
  $d \in \mathbb N$ such that
   $C_1 n^{r_1} d^n \leqslant c^{G}_n(L)
    \leqslant C_2 n^{r_2} d^n$ for all $n \in \mathbb N$.
\end{theorem}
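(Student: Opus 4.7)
The plan is to adapt M.V.~Zaicev's proof of the ordinary Lie-algebra case~\cite{ZaiLie} to the $G$-equivariant setting, along the lines of~\cite{ASGordienko} for polynomial identities of representations. After extending scalars to the algebraic closure of $F$, which preserves $c^G_n(L)$, the key structural input is a $G$-invariant Levi decomposition $L = B \oplus R$, where $R = \mathrm{Rad}(L)$ is the solvable radical, automatically $G$-invariant as a characteristic ideal, and $B$ is a $G$-stable semisimple Levi subalgebra; for a finite group $G$ acting on a finite dimensional Lie algebra in characteristic zero, a $G$-stable $B$ exists by a Mostow-type averaging argument. Splitting $B = B_1 \oplus \dots \oplus B_q$ into sums of $G$-orbits of simple ideals, the candidate exponent $d$ is defined as the maximum of $\dim(B_{i_1} \oplus \dots \oplus B_{i_k})$ over tuples of distinct indices for which some commutator $[b_1, r_1, b_2, \dots, r_{k-1}, b_k]$ with $b_j \in B_{i_j}$ and $r_j \in R$ is nonzero.

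For the upper bound, I let $S_n$ permute the subscripts $1, \dots, n$ of the variables $x^g_i$ (preserving the $G$-labels $g$) and decompose the cocharacter $\chi^G_n(L) = \sum_{\lambda \vdash n} m_\lambda \chi_\lambda$ of the $S_n$-module $V^G_n / (V^G_n \cap \Id^G(L))$. A Kemer-type argument, evaluating multilinear $G$-polynomials on $L = B \oplus R$, shows that $m_\lambda = 0$ unless the Young diagram $\lambda$ fits inside a $d$-row strip together with a bounded tail: solvability of $R$ forbids arbitrarily long alternations in $R$-variables, and $\dim B < \infty$ truncates the height. Combined with Lemma~\ref{LemmaCodimDim} and Stirling's formula applied to the hook length sum, this yields $c^G_n(L) \leqslant C_2 n^{r_2} d^n$.

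For the lower bound, fix $b_j \in B_{i_j}$ and $r_j \in R$ realizing the maximum defining $d$. Following Zaicev, for each large $n$ I build a multilinear $G$-polynomial $f_n \in V^G_n$ by applying a Young symmetrizer of almost-rectangular shape with $d$ columns to a template commutator whose arguments alternate across the $G$-isotypic components of each $B_{i_j}$ and are separated by repeated copies of variables specialised to the $r_j$. Evaluating $f_n$ on the fixed $b_j, r_j$ and detecting non-vanishing through a Razmyslov--Zaicev trace-type invariant on each $B_{i_j}$, one obtains $f_n \notin \Id^G(L)$; the hook length formula then produces at least $C_1 n^{r_1} d^n$ linearly independent such polynomials modulo $\Id^G(L)$.

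The decisive obstacle is the lower-bound construction: the Young symmetrizer must respect the $G$-orbit structure on the simple ideals of $B$. When a $G$-simple ideal $B_i$ splits into $\ell > 1$ simple summands cyclically permuted by $G$, alternations must be performed simultaneously across all $\ell$ summands to avoid a trivial zero, which forces one to work with the $F[G]$-endomorphism algebra of the associated isotypic module and to coordinate the decorations $g$ on the variables $x^g_j$ with positions in the Young diagram. Once this compatibility between the $G$-action and the Young symmetrizer is established, the matching of upper and lower bounds at the same integer $d$ reduces to the classical Regev-type counting estimate.
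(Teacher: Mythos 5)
Your proposed exponent $d$ is not the right invariant, and this is the fatal gap. You define $d$ as the maximum of $\dim(B_{i_1}\oplus\dots\oplus B_{i_k})$ over tuples of $G$-orbits of simple Levi summands admitting a nonvanishing alternating commutator, so $d$ records only the semisimple contribution. The quantity that actually governs the growth is a quotient of the \emph{whole} algebra by an intersection of annihilators: $d(L)=\max\dim\bigl(L/\bigcap_k\Ann(I_k/J_k)\bigr)$ over $G$-invariant ideal pairs $J_k\subseteq I_k$ with $I_k/J_k$ an irreducible $L$-module with $G$-action and a nonvanishing commutator condition on $B$-complements. By Lemma~\ref{LemmaIrrAnnBS} this quotient decomposes as a $B$-part plus a part coming from the non-nilpotent piece $S$ of the radical (where $R=S\oplus N$, $[B,S]=0$, Lemma~\ref{LemmaRS}), and the $S$-contribution is essential: Example~\ref{ExampleMetabelian} is a \emph{solvable} Lie $G$-algebra (so $B=0$ and your $d$ would be $0$), yet $\PIexp^G(L)=\max_i|O_i|$ can be arbitrarily large. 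With your $d$, neither bound can hold for that example.

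There are two further gaps even if $d$ is fixed. For the upper bound, knowing that $m(L,G,\lambda)=0$ outside a $d$-strip with a bounded tail (Lemma~\ref{LemmaUpper}) together with Lemma~\ref{LemmaCodimDim} is not enough: you also need a polynomial bound on the colength $\sum_\lambda m(L,G,\lambda)$, which is a genuine technical step (the paper proves it as Theorem~\ref{TheoremMult}, reducing via the diagonal embedding $S_n\hookrightarrow S_{n|G|}$ and the Littlewood--Richardson rule to the ordinary colength bound of Giambruno--Regev--Zaicev). Without it, $\max_\lambda\dim M(\lambda)\approx d^n$ only yields $c^G_n(L)\leqslant(\sum_\lambda m_\lambda)\cdot O(n^r d^n)$, and the first factor could a priori be exponential. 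For the lower bound, you name the decisive obstacle --- coordinating the Young symmetrizer and the $G$-labels $x^g_j$ with the $G$-orbit structure on the simple summands of $B$ --- but do not resolve it; the paper does so through Lemma~\ref{LemmaRedIrr} (permutation of $L_0$-summands by $G$), Lemma~\ref{LemmaS} (a $G$-polynomial alternating on the center and nondegenerate on the module), and the gluing construction of Theorem~\ref{TheoremAlternateFinal}. Moreover your construction never evaluates on the radical, so even with the $G$-orbit issue fixed it would not reach the correct exponent; the paper's lower bound has to pick faithful irreducible $(\ad B_k)\oplus R_k$-modules using the Jordan decomposition of Lemma~\ref{LemmaJordanDecomp} to bring in the $S$-contribution, which has no counterpart in your outline.
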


\begin{corollary}
The above analog of Amitsur's conjecture holds for
 such codimensions.
\end{corollary}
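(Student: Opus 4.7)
The strategy mirrors the Giambruno--Zaicev--Zaicev scheme for ordinary Lie codimensions, in the $G$-equivariant form developed by the author in \cite{ASGordienko} for representations. Since $\mathrm{char}\,F = 0$ and $G$ is finite, the group algebra $FG$ is semisimple, so the $G$-action on $L$ is completely reducible; consequently the solvable radical $R$ of $L$ is $G$-invariant (by uniqueness), and a $G$-equivariant version of Levi's theorem produces a $G$-invariant Levi subalgebra $B$ with $L = B \oplus R$. Decompose $B = B_1 \oplus \cdots \oplus B_s$ into $G$-simple ideals, each a sum of ordinary simple Lie ideals transitively permuted by $G$. I would then pass to the cocharacter $\chi^G_n(L) = \sum_{\lambda \vdash n} m_\lambda \chi_\lambda$ of the $FS_n$-module $V^G_n / (V^G_n \cap \Id^G(L))$.

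The exponent $d$ of the theorem will be
$$d := \max\bigl\{\dim(B_{i_1} \oplus \cdots \oplus B_{i_q}) \bigm| [B_{i_1}, R, B_{i_2}, R, \ldots, R, B_{i_q}] \neq 0 \text{ for some bracketing and some } i_1,\ldots,i_q\bigr\}.$$
For the upper bound I would establish a $G$-equivariant Kemer--Razmyslov type alternation lemma: a multilinear Lie $G$-polynomial that alternates in more than $\dim B_i$ variables of a fixed $G$-homogeneous type inside a single $G$-isotypical component of $B_i$ becomes a $G$-identity of $B$; the nilpotency of $R$ then bounds the number of positions ``used up'' by $R$-substitutions. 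Together these force $m_\lambda = 0$ whenever the Young diagram of $\lambda$ has more than a constant number of boxes outside the first $d$ rows. The hook length formula combined with a crude uniform control on $m_\lambda$ coming from Lemma~\ref{LemmaCodimDim} then yields $c^G_n(L) = \sum_\lambda m_\lambda \dim M_\lambda \leqslant C_2 n^{r_2} d^n$. For the lower bound I would choose $G$-simple summands $B_{i_1}, \ldots, B_{i_q}$ realizing $d$, together with elements $r_j \in R$ producing a nonvanishing bracket $[B_{i_1}, r_1, \ldots, r_{q-1}, B_{i_q}]$, and construct an explicit multilinear Lie $G$-polynomial $f_n$ of degree $n$ that alternates in $\lfloor n/d \rfloor$ disjoint groups of $d$ variables linked by connector variables modelling the $r_j$. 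Evaluation on an adapted basis of $L$ shows $f_n \notin \Id^G(L)$, and the $FS_n$-module generated by $f_n$ contains irreducibles indexed by rectangular Young diagrams of height $d$, giving $c^G_n(L) \geqslant C_1 n^{r_1} d^n$ by the hook length formula.

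The main obstacle will be the upper bound, specifically the $G$-equivariant alternation lemma. In the Lie setting there are no matrix units to substitute, so this lemma must be proved through the adjoint action of $B$ on $L$ combined with root and weight decompositions, and then adapted to the fact that a single $G$-simple summand $B_i$ may itself split into several $G$-conjugate ordinary simple ideals. Handling this wreath-product structure while simultaneously tracking the $S_n$-symmetry carried by $V^G_n$ is the delicate combinatorial step, and is where the assumption that $G$ may be non-Abelian introduces new difficulty over the classical case. Once the alternation lemma is in place, the matching upper and lower bounds immediately yield the corollary on existence of $\PIexp^G(L) = d \in \mathbb{Z}_+$.
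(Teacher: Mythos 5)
The corollary is an immediate consequence of Theorem~\ref{TheoremMainG} together with the remark that precedes it: if $L$ is non-nilpotent, the two-sided bound $C_1 n^{r_1} d^n \leqslant c^G_n(L) \leqslant C_2 n^{r_2} d^n$ forces $\sqrt[n]{c^G_n(L)} \to d \in \mathbb N$ since $C_i^{1/n} \to 1$ and $n^{r_i/n} \to 1$; if $L$ is nilpotent, the remark gives $c^G_n(L)=0$ eventually, so $\PIexp^G(L)=0 \in \mathbb Z_+$. That two-line squeeze argument is all that is required. Your proposal instead sets out to re-derive the whole of Theorem~\ref{TheoremMainG}, which is both unnecessary (the theorem is already available at this point) and, as sketched, incorrect.

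The concrete error is in your proposed formula for the exponent,
\[
d \mathrel{\stackrel{?}{:=}} \max\bigl\{\dim(B_{i_1}\oplus\cdots\oplus B_{i_q}) \mid [B_{i_1},R,\ldots,R,B_{i_q}]\neq 0\bigr\},
\]
which is modelled on the Giambruno--Zaicev associative formula (simple components linked through the Jacobson radical). For Lie $G$-algebras this is wrong: it returns $d=0$ whenever the Levi part $B$ is zero, yet the paper's Example~\ref{ExampleMetabelian} exhibits a metabelian (hence solvable, $B=0$) Lie $G$-algebra with $\PIexp^G(L)=m$ for arbitrarily large $m$. The correct invariant, both here and already in Zaicev's treatment of the ordinary Lie case, is
\[
d(L)=\max\ \dim\frac{L}{\Ann(I_1/J_1)\cap\cdots\cap\Ann(I_r/J_r)},
\]
taken over chains of $G$-invariant ideals $J_k\subseteq I_k$ whose subquotients $I_k/J_k$ are irreducible $L$-modules with $G$-action and satisfy a non-vanishing linking condition. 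This is phrased in terms of annihilators of composition factors of the adjoint $G$-module, not of semisimple ideals, precisely so that contributions coming from inside the solvable radical are captured. Consequently your upper-bound strategy (an ``alternation lemma for $B_i$'') aims at the wrong target; the paper's Section~\ref{SectionUpper} instead works with a composition chain of $G$-invariant ideals of the whole of $L$ and Lemma~\ref{LemmaReduct}. Your lower-bound sketch (gluing alternating polynomials for faithful irreducible modules over reductive pieces) is closer in spirit to Section~\ref{SectionLower}, but again it must be keyed to the reductive quotients $(\ad B_k)\oplus R_k$ acting on $I_k/J_k$, not to the $B_{i}$'s alone. In short: the corollary itself needs only the squeeze argument; the rest of your proposal is a re-proof of the theorem with an exponent formula that the paper's own examples refute.
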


\begin{remark}
If $L$ is nilpotent, i.e. $[x_1, \ldots, x_p]\equiv 0$ for some $p\in\mathbb N$, then, by the Jacobi identity,
$V^{G}_n \subseteq \Id^{G}(L)$ and $c^G_n(L)=0$ for all $n \geqslant p$.
\end{remark}

\begin{remark}
The theorem is still true if we allow $G$ to act not only by automorphisms,
but by anti-automorphisms too, i.e. if $G = G_0 \cup G_1$ such that
$[a,b]^g=[a^g, b^g]$ for all $a,b \in L$, $g \in G_0$ and
$[a,b]^g=[b^g, a^g]$ for all $a,b \in L$, $g \in G_1$. Indeed,
we can replace $G$ with $\tilde G = G_0 \cup (-G_1)$
where $[a,b]^{-g}=-[a,b]^g=-[b^g,a^g]=[a^{-g},b^{-g}]$ for all $(-g) \in (-G_1)$.
Then $\tilde G$ acts on $L$ by automorphisms only. Moreover,
$n$-linear functions from $L$ to $L$ that correspond to polynomials from $P^G_n$ and $P^{\tilde G}_n$, are the same. Thus
$$c^G_n(L)=\dim \left(\frac{V^{G}_n}{V^{G}_n \cap \Id^G(L)}\right)=
\dim \left(\frac{V^{\tilde G}_n}{ V^{\tilde G}_n \cap \Id^{\tilde G}(L)}\right)
= c^{\tilde G}_n(L)
$$ has the desired asymptotics.
\end{remark}

Theorem~\ref{TheoremMainG} is proved in Sections~\ref{SectionUpper}--\ref{SectionLower}.

\subsection{Duality between group gradings and group actions}\label{SubsectionDuality}

If $F$ is an algebraically closed field of
characteristic $0$ and $G$ is finite Abelian,
 there exists a well known duality between
 $G$-gradings and
  $\widehat G$-actions
  where $\widehat G = \Hom(G,F^{*}) \cong G$.
    Details of the application of this duality
    to polynomial identities
  can be found, e.g., in~\cite[Chapters~3 and 10]{ZaiGia}.

    A character $\psi \in \widehat G$ acts on $L$ in the natural
way: $(a_g)^\psi = \psi(g) a_g$ for all $g \in G$
and $a_g \in L^{(g)}$. Conversely, if $L$ is a $\widehat G$-algebra,
then $L^{(g)} = \lbrace a \in L \mid a^\psi = \psi(g)a \text{ for all } \psi \in
\widehat G \rbrace$ defines a $G$-grading on $L$.

Note that if $G$ is finite Abelian, then $L(X^{\mathrm{gr}})$ is a free $\widehat G$-algebra with
 free generators $y_j = \sum_{g\in G} x^{(g)}_j$.
Thus there exists an isomorphism $\varepsilon \colon L( X | \widehat G)
 \to L(X^{\mathrm{gr}})$
 defined by $\varepsilon(x_j)=\sum_{g\in G} x^{(g)}_j$,
that preserves $\widehat G$-action and $G$-grading.
The isomorphism has the property $\varepsilon((x_j)^{e_g})
=x^{(g)}_j$ where $e_g := \frac{1}{|G|}\sum_{\psi}(\psi(g))^{-1}\psi$
 is one of the minimal idempotents of $F\widehat G$ defined above.

\begin{lemma}\label{LemmaGradAction}
Let $L$ be a $G$-graded Lie algebra where $G$ is a finite Abelian group.
Consider the corresponding $\widehat G$-action on $L$. Then
 \begin{enumerate}
\item $\varepsilon(\Id^{\widehat G}(L))=\Id^{\mathrm{gr}}(L)$;
\item $c^{\widehat G}_n(L)=c^{\mathrm{gr}}_n(L)$.
\end{enumerate}
\end{lemma}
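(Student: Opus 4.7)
The plan is to exploit the isomorphism $\varepsilon \colon L(X \mid \widehat G) \to L(X^{\mathrm{gr}})$ already set up, after first recording the key observation that the idempotent $e_g = \frac{1}{|G|}\sum_\psi \psi(g)^{-1}\psi$ extracts the $g$-homogeneous component of any element of $L$. Indeed, for $a = \sum_{h \in G} a^{(h)}$ with $a^{(h)} \in L^{(h)}$, a direct computation using orthogonality of the characters of $\widehat G$ gives $a^{e_g} = a^{(g)}$. This identity pins down, on both sides, the values that the two kinds of polynomials take under substitution.

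For part (1), I would rewrite an arbitrary $f \in L(X \mid \widehat G)$ in the idempotent basis $x_j^h = \sum_{g} h(g)\, x_j^{e_g}$, so that $f$ becomes a Lie polynomial in the elements $x_j^{e_g}$. Since $\varepsilon(x_j^{e_g}) = x_j^{(g)}$, the image $\varepsilon(f)$ is the corresponding graded Lie polynomial in the $x_j^{(g)}$. The claim is then that, for any $a_1, \dots, a_n \in L$, the value $f(a_1, \dots, a_n)$ coincides with $\varepsilon(f)$ evaluated at the homogeneous components $a_j^{(g)}$, because each $x_j^{e_g}$ evaluated at $a_j$ becomes precisely $a_j^{(g)}$. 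Conversely, any tuple of homogeneous substitutions $b_j^{(g)} \in L^{(g)}$ for the graded variables arises this way by setting $a_j := \sum_g b_j^{(g)}$. Hence $f \in \Id^{\widehat G}(L)$ if and only if $\varepsilon(f) \in \Id^{\mathrm{gr}}(L)$.

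For part (2), I would check that $\varepsilon$ restricts to a linear isomorphism $V^{\widehat G}_n \to V^{\mathrm{gr}}_n$. The space $V^{\widehat G}_n$ is spanned by commutators $[x^{h_1}_{\sigma(1)}, \dots, x^{h_n}_{\sigma(n)}]$; expanding each $x_j^{h}$ in the idempotent basis yields a spanning set consisting of commutators $[x^{e_{g_1}}_{\sigma(1)}, \dots, x^{e_{g_n}}_{\sigma(n)}]$, which $\varepsilon$ sends bijectively onto the defining spanning set $[x^{(g_1)}_{\sigma(1)}, \dots, x^{(g_n)}_{\sigma(n)}]$ of $V^{\mathrm{gr}}_n$. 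Combined with (1), the induced map $V^{\widehat G}_n /(V^{\widehat G}_n \cap \Id^{\widehat G}(L)) \to V^{\mathrm{gr}}_n / (V^{\mathrm{gr}}_n \cap \Id^{\mathrm{gr}}(L))$ is a linear isomorphism, giving $c^{\widehat G}_n(L) = c^{\mathrm{gr}}_n(L)$.

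The only delicate point, and really the heart of the argument, lies in (1): one has to verify that vanishing of $\varepsilon(f)$ on all graded substitutions, where the variables $x_j^{(g)}$ are logically independent, matches vanishing of $f$ on all $a_j \in L$, where the components $a_j^{(g)}$ are coupled as parts of a single $a_j$. This equivalence rests precisely on $L = \bigoplus_g L^{(g)}$, which both gives a unique decomposition of any $a_j$ into homogeneous parts and allows us to freely prescribe those parts.
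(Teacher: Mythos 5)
Your proof is correct and follows the same route as the paper: the paper declares part (1) evident and deduces part (2) from (1) together with the equality $\varepsilon(V^{\widehat G}_n)=V^{\mathrm{gr}}_n$, which is exactly what you establish. Your write-up simply supplies the details behind "evident," in particular the observation $a^{e_g}=a^{(g)}$ and the decoupling argument via $L=\bigoplus_g L^{(g)}$, both of which are accurate.
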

\begin{proof}
The first assertion is evident. The second assertion follows
from the first one and the equality
$\varepsilon(V^{\widehat G}_n)=V^{\mathrm{gr}}_n$.
\end{proof}
\begin{remark}
Note that $\mathbb Z_2$-grading in Example~\ref{ExampleIdGr}
corresponds to $\mathbb Z_2$-action in Example~\ref{ExampleIdG}.
\end{remark}

\begin{proof}[Proof of Theorem~\ref{TheoremMain}]
Codimensions do not change upon an extension of the base field.
The proof is analogous to the cases of ordinary codimensions of
associative~\cite[Theorem~4.1.9]{ZaiGia} and
Lie algebras~\cite[Section~2]{ZaiLie}.
Thus without loss of generality we may assume
 $F$ to be algebraically closed.
In virtue of Lemma~\ref{LemmaGradAction}, Theorem~\ref{TheoremMain}
 is an immediate consequence of Theorem~\ref{TheoremMainG}.
\end{proof}

\subsection{Formula for the PI-exponent}
Theorem~\ref{TheoremMainG} is formulated for an arbitrary
field $F$ of characteristic $0$, but without loss of generality
we may assume that $F$ is algebraically closed.

Fix a Levi decomposition $L=B\oplus R$ where $B$ is a maximal semisimple
subalgebra of $L$ and $R$ is the solvable radical of $L$.
Note that $R$ is invariant under $G$-action.
By~\cite[Theorem~1, Remark~3]{Taft}, we can choose $B$
invariant under $G$-action too.

We say that $M$ is an \textit{$L$-module with $G$-action}
if $M$ is both left $L$- and $FG$-module, and
 $(a\cdot v)^g = a^g \cdot v^g$ for all $a\in L$, $v \in M$
and $g \in G$. There is a natural $G$-action on $\End_F(M)$
defined by $\psi^g m = (\psi m^{g^{-1}})^g$, $m\in M$, $g\in G$,
$\psi \in \End_F(M)$.
Note that $L \to \mathfrak{gl}(M)$ is a homomorphism
of $FG$-modules.
 Such module $M$ is \textit{irreducible}
if for any $G$- and $L$-invariant
subspace $M_1 \subseteq M$ we have either $M_1=0$ or $M_1=M$.
Each $G$-invariant ideal in $L$ can be regarded as
a left $L$-module with $G$-action under the adjoint representation
of $L$.

Consider $G$-invariant ideals $I_1, I_2, \ldots, I_r$,
$J_1, J_2, \ldots, J_r$, $r \in \mathbb Z_+$, of the algebra $L$ such that $J_k \subseteq I_k$,
satisfying the conditions
\begin{enumerate}
\item $I_k/J_k$ is an irreducible $L$-module with $G$-action;
\item for any $G$-invariant $B$-submodules $T_k$
such that $I_k = J_k\oplus T_k$, there exist numbers
$q_i \geqslant 0$ such that $$[[T_1, \underbrace{L, \ldots, L}_{q_1}], [T_2, \underbrace{L, \ldots, L}_{q_2}], \ldots, [T_r,
 \underbrace{L, \ldots, L}_{q_r}]] \ne 0.$$
\end{enumerate}

Let $M$ be an $L$-module. Denote by $\Ann M$ its annihilator in $L$.
Let $$d(L) := \max \left(\dim \frac{L}{\Ann(I_1/J_1) \cap \dots \cap \Ann(I_r/J_r)}
\right)$$
where the maximum is found among all $r \in \mathbb Z_+$ and all $I_1, \ldots, I_r$, $J_1, \ldots, J_r$
satisfying Conditions 1--2. We claim
that $\PIexp^G(L)=d(L)$ and prove
Theorem~\ref{TheoremMainG} for $d=d(L)$.

\subsection{Examples} Now we give several examples.
\begin{example}\label{ExampleGSimple}
Let $L$ be a finite dimensional $G$-simple Lie algebra
over an algebraically closed field $F$ of characteristic $0$
where
$G$ is a finite group.
Then there exist $C > 0$ and $r \in \mathbb R$ such that
$C n^r (\dim L)^n \leqslant c_n^G(L) \leqslant (\dim L)^{n+1}$.
\end{example}
\begin{proof}
The upper bound follows from Lemma~\ref{LemmaCodimDim}.
Consider $G$-invariant $L$-modules $I_1 = L$ and $J_1=0$.
Then  $I_1/J_1$ is an irreducible $L$-module,
 $\Ann(I_1/J_1)=0$ since a $G$-simple algebra has zero center,
 and $\dim(L/\Ann(I_1/J_1))=\dim L$. Thus $d(L) \geqslant \dim L$
 and by Theorem~\ref{TheoremMainG} we obtain the lower bound.
\end{proof}

\begin{example}\label{ExampleGrSimple}
Let $L$ be a finite dimensional simple $G$-graded Lie algebra
over an algebraically closed field $F$ of characteristic $0$ where
$G$ is a finite Abelian group.
Then there exist $C > 0$ and $r \in \mathbb R$ such that
$C n^r (\dim L)^n \leqslant c_n^{\mathrm{gr}}(L) \leqslant (\dim L)^{n+1}$.
\end{example}
\begin{proof}
This follows from~Example~\ref{ExampleGSimple}
and Lemma~\ref{LemmaGradAction}.
\end{proof}

\begin{example}\label{ExampleGSolvable}
Let $L$ be a finite dimensional Lie algebra with $G$-action
over any field $F$ of characteristic $0$ such
 that $\PIexp^G(L) \leqslant 2$ where
$G$ is a finite group. Then $L$ is solvable.
\end{example}
\begin{proof}
It is sufficient to prove the statement for an algebraically
closed field $F$.
(See the remark before Theorem~\ref{TheoremMainG}.)
 Consider the $G$-invariant Levi decomposition
$L=B\oplus R$. If $B \ne 0$, there exists a $G$-simple Lie subalgebra $B_1 \subseteq L$,
$\dim B_1 \geqslant 3$ and $\PIexp^G(L) = d(L) \geqslant 3$
by Example~\ref{ExampleGSimple}.
We get a contradiction. Hence $L=R$ is a solvable algebra.
\end{proof}

Analogously, we derive Example~\ref{ExampleGrSolvable}
from Example~\ref{ExampleGrSimple}.

\begin{example}\label{ExampleGrSolvable}
Let $L$ be a finite dimensional $G$-graded Lie algebra
over any field $F$ of characteristic $0$
 such that $\PIexp^\mathrm{gr}(L)
 \leqslant 2$ where
$G$ is a finite Abelian group. Then $L$ is solvable.
\end{example}

\begin{example}\label{ExampleGSemiSimple}
Let $L=B_1 \oplus \ldots \oplus B_s$ be a finite dimensional semisimple Lie $G$-algebra
over an algebraically closed field $F$ of characteristic $0$
where
$G$ is a finite group and $B_i$ are $G$-minimal ideals.
 Let $d:=\max_{1 \leqslant i \leqslant s} \dim B_i$.
Then there exist $C_1, C_2 > 0$ and $r_1, r_2 \in \mathbb R$ such that
$C_1 n^{r_1} d^n \leqslant c_n^G(L) \leqslant C_2 n^{r_2} d^n$.
\end{example}
\begin{proof}
Note that if $I$ is a $G$-simple ideal of $L$, then $[I,L] \ne 0$ and
hence $[I,B_i] \ne 0$
for some $1 \leqslant i \leqslant s$.
However $[I,B_i] \subseteq B_i \cap I$ is a $G$-invariant ideal.
Thus $I=B_i$. And if $I$ is a $G$-invariant ideal of $L$,
then it is semisimple and each of its simple components coincides with
one of $B_i$. Thus if $I \subseteq J$ are $G$-invariant ideals of $L$
and $I/J$ is irreducible,
then $I = B_i \oplus J$ for some $1 \leqslant i \leqslant s$
and $\dim(L/\Ann(I/J))=\dim B_i$. Suppose $I_1, \ldots, I_r$, $J_1, \ldots, J_r$ satisfy Conditions 1--2.
Let $I_k = B_{i_k}\oplus J_k$,
 $1 \leqslant k \leqslant r$.
 Then
  $$[[B_{i_1}, L, \ldots, L], [B_{i_2}, L, \ldots, L], \ldots, [B_{i_r}, L, \ldots, L]]\ne 0$$
  for some number of copies of $L$.
 Hence $i_1 = \ldots = i_r$ and $$\dim \frac{L}{\Ann(I_1/J_1) \cap \dots \cap \Ann(I_r/J_r)} = \dim B_{i_1}.$$ Therefore, $d(L)=\max_{1 \leqslant i \leqslant s} \dim B_i$
and the result follows from Theorem~\ref{TheoremMainG}.
\end{proof}

\begin{example}\label{ExampleGradSemiSimple}
Let $L=B_1 \oplus \ldots \oplus B_s$ be a finite dimensional semisimple
$G$-graded Lie algebra
over an algebraically closed field $F$ of characteristic $0$ where
$G$ is a finite Abelian group and $B_i$ are minimal graded ideals.
 Let $d:=\max_{1 \leqslant i \leqslant s} \dim B_i$.
Then there exist $C_1, C_2 > 0$ and $r_1, r_2 \in \mathbb R$ such that
$C_1 n^{r_1} d^n \leqslant c_n^{\mathrm{gr}}(L) \leqslant C_2 n^{r_2} d^n$.
\end{example}
\begin{proof}
This follows from~Example~\ref{ExampleGSemiSimple}
and Lemma~\ref{LemmaGradAction}.
\end{proof}

\begin{example}\label{ExampleMetabelian}
Let $m\in \mathbb N$, $G\subseteq S_m$
and $O_i$ be the orbits of $G$-action on
$$\lbrace1, 2, \ldots, m \rbrace = \coprod_{i=1}^s O_i.$$
 Denote $$d:=\max_{1\leqslant i \leqslant s} |O_i|.$$
Let $L$ be the Lie algebra
over any field $F$ of characteristic $0$ with basis $a_1, \ldots, a_m$, $b_1,
\ldots, b_m$, $\dim L = 2m$, and multiplication
defined by formulas $[a_i, a_j]=[b_i,b_j]=0$ and
 $$[a_i, b_j] = \left\lbrace \begin{array}{rrr} b_j & \text{if} &
  i = j,\\
0 & \text{if} & i \ne j.
\end{array} \right.$$
 Suppose $G$ acts on $L$
as follows:
 $(a_i)^{\sigma} =a_{\sigma(i)}$ and
 $(b_j)^{\sigma} =b_{\sigma(j)}$ for $\sigma \in G$.
Then there exist $C_1, C_2 > 0$ and $r_1, r_2 \in \mathbb R$ such that
$$C_1 n^{r_1} d^n \leqslant c_n^{G}(L) \leqslant C_2 n^{r_2} d^n.$$
In particular, if $$G=\langle\tau\rangle \cong \mathbb Z_m = \mathbb Z/(m\mathbb Z)
=\lbrace \bar 0, \bar 1, \ldots, \overline{m-1} \rbrace$$
where $\tau = (1\,2\, 3\, \ldots\, m)$ (a cycle),
then
$$C_1 n^{r_1} m^n \leqslant c_n^{G}(L) \leqslant C_2 n^{r_2} m^n.$$
However, $c_n(L)=n-1$ for all $n\in\mathbb N$.
\end{example}
\begin{proof} If $K \supseteq F$ is a larger field, then $K \mathbin{\otimes_F} L$
is defined by the same formulas as $L$.
Since $c^G_n(L)=c^{G,K}_n(K \mathbin{\otimes_F} L)$
(see the remark before Theorem~\ref{TheoremMainG}),
we may assume $F$ to be
algebraically closed.

Let $B_i:=\langle b_j \mid j \in O_i \rangle_F$,
$1 \leqslant i \leqslant s$.
Suppose $I$ is a $G$-invariant ideal of $L$.
 If $b_i \in I$, then
 $b_{\sigma(i)}=(b_i)^\sigma \in I$ for all $\sigma \in G$.
 Thus if $i \in O_j$, then $b_k \in I$ for all $k \in O_j$.
 Let $c:=\sum\limits_{i=1}^m (\alpha_i a_i + \beta_i b_i) \in I$
 for some $\alpha_i, \beta_i \in F$.
 Then $\beta_i b_i = [a_i, c] \in I$ for all $1\leqslant i \leqslant m$ too.
Therefore, $I=A_0 \oplus B_{i_1} \oplus \ldots \oplus B_{i_k}$
for some $1 \leqslant i_j \leqslant s$ and $A_0 \subseteq \langle
a_1, \ldots, a_m \rangle_F$.

If $I,J \subseteq L$ are $G$-invariant ideals, then
$J \subseteq J+[L,L]\cap I \subseteq I$ is a $G$-invariant ideal
too. Suppose $I/J$ is irreducible. Then either $[L,L]\cap I \subseteq J$
and $\Ann(I/J)=L$ or $I \subseteq J + [L,L]$
where $[L,L]=\langle b_1, \ldots, b_m \rangle_F$.
Thus $\Ann(I/J)\ne L$ implies $J=A_0 \oplus B_{i_1} \oplus \ldots \oplus B_{i_k}$
and $I=B_\ell \oplus J$ for some $1 \leqslant \ell \leqslant s$.
In this case $\dim(L/\Ann(I/J))=|O_\ell|$.

Note that if $I_1 = B_{i_1}\oplus J_1$
and $I_2 = B_{i_2}\oplus J_2$, then $$[[B_{i_1}, L, \ldots, L], [B_{i_2}, L, \ldots, L]]=0.$$
Thus $I_1, \ldots, I_r$, $J_1, \ldots, J_r$ can satisfy Conditions 1--2
only if $r=1$. Hence $$d(L)=\max_{1 \leqslant i \leqslant s} |O_\ell|$$
and by Theorem~\ref{TheoremMainG} we obtain the bounds.

Consider the ordinary polynomial identities.
Using the Jacobi identity, any monomial in $V_n$
can be rewritten as a linear combination of
left-normed commutators $[x_1, x_j, x_{i_3},
\ldots, x_{i_n}]$. Since the polynomial identity $$[[x,y],[z,t]]\equiv 0$$
holds in $L$, we may assume that $i_3 < i_4 < \ldots < i_n$.
Note that $f_j=[x_1, x_j, x_{i_3},
\ldots, x_{i_n}]$, $2\leqslant j \leqslant n$, are linearly independent modulo $\Id(L)$.
Indeed, if $\sum_{k=2}^n \alpha_k f_k \equiv 0$, $\alpha_k \in F$,
 then we substitute $x_j=b_1$ and $x_i=a_1$ for $i\ne j$.
 Only $f_j$ does not vanish.
 Hence $\alpha_j=0$ and $c_n(L)=n-1$.
\end{proof}

\begin{example}\label{ExampleMetabelianGr}
Let $m\in \mathbb N$, $L=\bigoplus\limits_{\bar k \in \mathbb Z_m} L^{(\bar k)}$
 be the $\mathbb Z_m$-graded Lie algebra with
 $L^{(\bar k)}= \langle c_{\bar k}, d_{\bar k} \rangle_F$,
  $\dim L^{(\bar k)} = 2$, multiplication
  $[c_{\bar\imath}, c_{\bar\jmath}]=
  [d_{\bar\imath},d_{\bar\jmath}]=0$ and
  $[c_{\bar\imath}, d_{\bar\jmath}] =
   d_{\bar\imath+\bar\jmath}$ where $F$ is any field of
    characteristic $0$.
Then there exist $C_1, C_2 > 0$ and $r_1, r_2 \in \mathbb R$ such that
$$C_1 n^{r_1} m^n \leqslant c_n^{\mathrm{gr}}(L) \leqslant C_2 n^{r_2} m^n.$$
\end{example}
\begin{proof}
Again, we may assume $F$ to be
algebraically closed.
Let $\zeta \in F$ be an $m$th primitive root of $1$.
Then $\widehat G
=\lbrace \psi_0, \ldots, \psi_{m-1}\rbrace$
 for $G=\mathbb Z_m$
where $\psi_\ell(\bar\jmath):=\zeta^{\ell j}$.
 We can identify the algebras from Example~\ref{ExampleMetabelian}
 and Example~\ref{ExampleMetabelianGr}
by formulas $c_{\bar\jmath} = \sum_{k=1}^m \zeta^{-jk} a_k$ and
$d_{\bar\jmath} = \sum_{k=1}^m \zeta^{-jk} b_k$. The
$\mathbb Z_m$-grading and $\langle\tau\rangle$-action
correspond to each other since $(c_{\bar\jmath})^{\tau^\ell} = \zeta^{\ell j} c_{\bar\jmath}
=\psi_\ell(\bar\jmath) c_{\bar\jmath}$
and $(d_{\bar\jmath})^{\tau^\ell} = \zeta^{\ell j} d_{\bar\jmath}=\psi_\ell(\bar\jmath)
 d_{\bar\jmath}$.
By Lemma~\ref{LemmaGradAction}, $c_n^{\mathrm{gr}}(L)=c_n^{\langle\tau\rangle}(L)$
and  the bounds follow from~Example~\ref{ExampleMetabelian}.
\end{proof}

\subsection{$S_n$-cocharacters}
One of the main tools in the investigation of polynomial
identities is provided by the representation theory of symmetric groups.
 The symmetric group $S_n$ acts
 on the space $\frac {V^G_n}{V^G_n \cap \Id^G(L)}$
  by permuting the variables.
  Irreducible $FS_n$-modules are described by partitions
  $\lambda=(\lambda_1, \ldots, \lambda_s)\vdash n$ and their
  Young diagrams $D_\lambda$.
   The character $\chi^G_n(L)$ of the
  $FS_n$-module $\frac {V^G_n}{V^G_n \cap \Id^G(L)}$ is called the $n$th
  \textit{cocharacter} of polynomial $G$-identities of $L$.
  We can rewrite it as
  a sum $\chi^G_n(L)=\sum_{\lambda\vdash n} m(L, G, \lambda) \chi(\lambda)$ of
  irreducible characters $\chi(\lambda)$.
Let  $e_{T_{\lambda}}=a_{T_{\lambda}} b_{T_{\lambda}}$
and
$e^{*}_{T_{\lambda}}=b_{T_{\lambda}} a_{T_{\lambda}}$
where
$a_{T_{\lambda}} = \sum_{\pi \in R_{T_\lambda}} \pi$
and
$b_{T_{\lambda}} = \sum_{\sigma \in C_{T_\lambda}} (\sign \sigma) \sigma$,
be the Young symmetrizers corresponding to a Young tableau~$T_\lambda$.
Then $M(\lambda) = FS e_{T_\lambda} \cong FS e^{*}_{T_\lambda}$
is an irreducible $FS_n$-module corresponding to the partition~$\lambda \vdash n$.
  We refer the reader to~\cite{ZaiGia, Bahturin, DrenKurs} for an account
  of $S_n$-representations and their applications to polynomial
  identities.

\medskip

Our proof of Theorem~\ref{TheoremMainG} follows the outline of the proof by M.V.~Zaicev~\cite{ZaiLie}.
However, in many cases we need to apply new ideas.

In Section~\ref{SectionAux} we discuss modules with
$G$-action over Lie $G$-algebras, their
annihilators and complete reducibility.

In Section~\ref{SectionMult} we prove that $m(L, G, \lambda)$
is polynomially bounded. In Section~\ref{SectionUpper}
we prove that if $m(L, G, \lambda) \ne 0$, then
the corresponding Young diagram $D_\lambda$
has at most $d$ long rows. This implies the upper bound.

In Section~\ref{SectionAlt} we
consider faithful irreducible $L_0$-modules with $G$-action
where $L_0$ is a reductive Lie $G$-algebra.
For an arbitrary $k\in\mathbb N$,
we construct an associative $G$-polynomial that is
alternating in $2k$ sets,
 each consisting of $\dim L_0$ variables. This polynomial
 is not an identity of the corresponding representation
 of $L_0$. In Section~\ref{SectionLower} we choose reductive
 algebras and faithful irreducible modules with $G$-action,
 and glue the corresponding alternating polynomials.
 This allows us to find $\lambda \vdash n$
 with $m(L, G, \lambda)\ne 0$ such that $\dim M(\lambda)$
 has the desired asymptotic behavior and the lower bound is proved.

\section{Lie algebras and modules with $G$-action}
\label{SectionAux}

We need several auxiliary lemmas. First, the Weyl
theorem~\cite[Theorem 6.3]{Humphreys} on complete
 reducibility of representations can be easily extended
to the case of Lie algebras with $G$-action.

\begin{lemma}\label{LemmaComplIrrGInv} Let $M$ be a finite dimensional
module with $G$-action over a Lie $G$-algebra $L_0$. Suppose $M$ is a
completely reducible $L_0$-module
disregarding the $G$-action. Then
$M$ is completely reducible $L_0$-module with $G$-action.
\end{lemma}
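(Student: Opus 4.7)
The plan is to reduce the statement to showing that every $G$-invariant $L_0$-submodule $N\subseteq M$ admits a $G$-invariant $L_0$-complement, which is the usual reformulation of complete reducibility. I would then produce such a complement by the standard Maschke/Weyl averaging trick. The paper works throughout with finite $G$ over a field of characteristic $0$, so $|G|$ is invertible in $F$ and averaging over $G$ is available.

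Given $N$ as above, I would first use the hypothesis (complete reducibility of $M$ as a plain $L_0$-module) to fix an $L_0$-linear projection $\pi\colon M\to N$, with no regard to the $G$-action. Then I would define
\[
\tilde\pi(m) := \frac{1}{|G|}\sum_{g\in G}\bigl(\pi(m^{g^{-1}})\bigr)^{g}.
\]
$G$-invariance of $N$ gives $\tilde\pi(M)\subseteq N$; for $m\in N$, each summand equals $(m^{g^{-1}})^g=m$, so $\tilde\pi|_N=\id_N$ and $\tilde\pi$ is a projection onto $N$. Reindexing $g\mapsto gh$ yields $\tilde\pi(m^h)=(\tilde\pi(m))^h$, i.e.\ $G$-equivariance.

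The one calculation worth writing out is $L_0$-linearity of $\tilde\pi$. For $a\in L_0$ and $m\in M$, the compatibility $(a\cdot v)^h=a^h\cdot v^h$ rewrites $(a\cdot m)^{g^{-1}}$ as $a^{g^{-1}}\cdot m^{g^{-1}}$; $L_0$-linearity of $\pi$ gives $\pi(a^{g^{-1}}\cdot m^{g^{-1}})=a^{g^{-1}}\cdot\pi(m^{g^{-1}})$; and applying the exponent $g$ together with $(a^{g^{-1}})^g=a$ factors $a$ out of every summand. Thus $\tilde\pi(a\cdot m)=a\cdot\tilde\pi(m)$, and $\ker\tilde\pi$ is the desired $G$-invariant $L_0$-complement to $N$.

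There is no real conceptual obstacle here; the argument is a routine Maschke-style averaging. The only point where the Lie-action hypothesis genuinely enters (as opposed to the $G$-action being treated in isolation) is the combined use of $(a\cdot v)^g = a^g\cdot v^g$ and $(a^{g^{-1}})^g=a$, which is precisely what the definition of a module with $G$-action is designed to supply.
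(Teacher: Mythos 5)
Your proposal is correct and coincides with the paper's proof: both reduce to finding a $G$-invariant $L_0$-complement, build the same averaged projection $\tilde\pi(m)=\frac{1}{|G|}\sum_{g\in G}\pi(m^{g^{-1}})^g$, verify it is an $L_0$- and $G$-equivariant projection onto the submodule, and take its kernel. There is nothing to add.
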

\begin{corollary}
If $M$ is a finite dimensional module with $G$-action over a semisimple
Lie $G$-algebra $B_0$, then $M$ is a completely reducible module with $G$-action.
\end{corollary}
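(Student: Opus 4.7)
The plan is to reduce, as usual, to showing that every $L_0$- and $G$-invariant submodule $N \subseteq M$ admits an $L_0$- and $G$-invariant complementary subspace, and then to produce such a complement by a twisted averaging procedure over $G$, which is finite and whose order is invertible in $F$.

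First I would use the hypothesis that $M$ is completely reducible as an $L_0$-module (ignoring $G$) to fix an $L_0$-invariant projection $\pi\colon M\to M$ with image $N$ and $\pi|_N=\id_N$. This $\pi$ need not commute with the $G$-action. To repair this, define the twisted average
\[
\tilde\pi(v) := \frac{1}{|G|}\sum_{g\in G} g\cdot \pi(g^{-1}\cdot v),\qquad v\in M.
\]
Since $N$ is $G$-invariant, each summand $g\cdot\pi(g^{-1}\cdot v)$ lies in $N$, so $\tilde\pi$ maps $M$ into $N$. For $v\in N$ we have $g^{-1}\cdot v\in N$, hence $\pi(g^{-1}\cdot v)=g^{-1}\cdot v$ and $g\cdot\pi(g^{-1}\cdot v)=v$, so $\tilde\pi|_N=\id_N$ and $\tilde\pi$ is a projection onto $N$. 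A direct substitution $g=hk$ in the sum shows $\tilde\pi(h\cdot v)=h\cdot\tilde\pi(v)$, i.e.\ $\tilde\pi$ is $G$-equivariant.

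The main technical point I expect to have to verify carefully is that $\tilde\pi$ is $L_0$-linear. The issue is that the $L_0$- and $G$-actions do not strictly commute; they satisfy $(a\cdot v)^g=a^g\cdot v^g$, which for a projection means that conjugating an $L_0$-action by $g\in G$ twists $a$ into $a^g$. Using $g^{-1}\cdot(a\cdot v)=a^{g^{-1}}\cdot(g^{-1}\cdot v)$, the $L_0$-linearity of $\pi$, and $(a^{g^{-1}})^{g}=a$, one computes
\[
g\cdot\pi(g^{-1}\cdot(a\cdot v))=g\cdot\bigl(a^{g^{-1}}\cdot\pi(g^{-1}\cdot v)\bigr)=a\cdot\bigl(g\cdot\pi(g^{-1}\cdot v)\bigr),
\]
so each summand, and hence $\tilde\pi$, is $L_0$-linear.

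Once $\tilde\pi$ is shown to be a projection onto $N$ that is both $L_0$-linear and $G$-equivariant, its kernel $N':=\ker\tilde\pi$ is automatically an $L_0$- and $G$-invariant complement to $N$ in $M$, establishing the lemma. The corollary for semisimple $B_0$ is then immediate from Weyl's theorem applied to $M$ as an ordinary $B_0$-module. I do not anticipate any obstacle beyond the bookkeeping in the twisted-average computation; the characteristic zero assumption and the finiteness of $G$ are used only to divide by $|G|$.
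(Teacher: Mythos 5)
Your argument is correct and is essentially the same as the paper's: the paper also proves the corollary by first establishing Lemma~\ref{LemmaComplIrrGInv} via exactly this twisted averaging $\tilde\pi(v)=\frac{1}{|G|}\sum_{g\in G}\pi(v^{g^{-1}})^g$, checking $G$-equivariance and $L_0$-linearity using the compatibility $(a\cdot v)^g=a^g\cdot v^g$, and then invoking Weyl's theorem to supply the hypothesis of ordinary complete reducibility when $B_0$ is semisimple.
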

\begin{proof}[Proof of Lemma~\ref{LemmaComplIrrGInv}]
 Suppose $M_1 \subseteq M$ is a $G$-invariant $L_0$-submodule of $M$.
Then it is sufficient
to prove that there exists a $G$-invariant $L_0$-submodule
 $M_2 \subseteq M$
such that $M =M_1 \oplus M_2$.

Since $M$ is completely reducible, there exists an $L_0$-homomorphism $\pi \colon M \to M_1$
such that $\pi(v)=v$ for all $v \in M_1$. Consider a homomorphism
$\tilde \pi \colon M \to M_1$,
 $\tilde\pi(v)=\frac{1}{|G|}\sum_{g\in G} \pi(v^{g^{-1}})^g$.
 Then $\tilde\pi(v)=v$ for all $v \in M_1$ too and for all
 $a\in L_0$, $h \in G$ we have
 $$\tilde\pi(a\cdot v)=\frac{1}{|G|}\sum_{g\in G} \pi((a \cdot v)^{g^{-1}})^g
 =\frac{1}{|G|}\sum_{g\in G} \pi(a^{g^{-1}} \cdot v^{g^{-1}})^g
 =\frac{1}{|G|}\sum_{g\in G} a \cdot \pi(v^{g^{-1}})^g
 = a\cdot \tilde\pi(v),$$
 $$\tilde\pi(v^h)=\frac{1}{|G|}\sum_{g\in G} \pi((v^h)^{g^{-1}})^g
 =\frac{1}{|G|}\sum_{g\in G} \pi(v^{(h^{-1}g)^{-1}})^{h(h^{-1}g)}
 =\frac{1}{|G|}\sum_{g'\in G} (\pi(v^{{g'}^{-1}})^{g'})^h
 = \tilde\pi(v)^h$$
 where $g'=h^{-1}g$.
 Thus we can take $M_2=\ker \tilde \pi$.
\end{proof}

 Note that $[L, R] \subseteq N$
by \cite[Proposition 2.1.7]{GotoGrosshans} where $N$ is the nilpotent radical,
which is a $G$-invariant ideal.

\begin{lemma}\label{LemmaRS}
There exists a $G$-invariant subspace $S \subseteq R$ such that
 $R=S\oplus N$ is the direct sum of subspaces and $[B, S]=0$.
\end{lemma}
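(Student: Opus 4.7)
The plan is to realize $R$ as a finite-dimensional $B$-module with $G$-action (via the adjoint representation), exhibit $N$ as a submodule, and invoke the corollary to Lemma~\ref{LemmaComplIrrGInv} to split off a $G$-invariant $B$-complement; then use the key fact $[L, R] \subseteq N$ to upgrade $B$-invariance of $S$ into $[B, S] = 0$.

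More precisely, first I would observe that $N$ is both $G$-invariant (being a characteristic ideal of $L$) and $B$-invariant (as an ideal of $L$), and similarly for $R$. Thus $R$ is a finite-dimensional $B$-module with $G$-action containing $N$ as a submodule with $G$-action: the compatibility $[a,r]^g = [a^g, r^g]$ for $a\in B$, $r\in R$, $g\in G$ is immediate because $G$ acts on $L$ by automorphisms.

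Next, since $B$ is a semisimple Lie $G$-algebra, the corollary to Lemma~\ref{LemmaComplIrrGInv} gives complete reducibility of $R$ as a $B$-module with $G$-action. In particular, there exists a $G$-invariant $B$-submodule $S \subseteq R$ with $R = N \oplus S$ as a direct sum of subspaces. This takes care of the decomposition and the $G$-invariance simultaneously.

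Finally I would verify $[B, S] = 0$. On the one hand, $S$ being a $B$-submodule gives $[B, S] \subseteq S$. On the other hand, the fact (recalled just before the lemma from~\cite[Proposition 2.1.7]{GotoGrosshans}) that $[L, R] \subseteq N$ yields $[B, S] \subseteq [L, R] \subseteq N$. Hence $[B, S] \subseteq S \cap N = 0$, as required. I do not anticipate a serious obstacle here: the only real ingredient beyond bookkeeping is the $G$-equivariant Weyl theorem already established in Lemma~\ref{LemmaComplIrrGInv}, and the trick of combining $B$-invariance of $S$ with $[L, R] \subseteq N$ to collapse $[B, S]$ to zero is standard.
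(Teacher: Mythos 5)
Your proof is correct and follows essentially the same route as the paper: apply the corollary of Lemma~\ref{LemmaComplIrrGInv} to the inclusion $N \subseteq R$ of $B$-modules with $G$-action to obtain a $G$-invariant $B$-complement $S$, then combine $[B,S]\subseteq S$ with $[B,S]\subseteq[L,R]\subseteq N$ to conclude $[B,S]=0$.
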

\begin{proof}
Note that $R$ is a $B$-submodule under the adjoint representation
of $B$ on $L$.
Applying the corollary of Lemma~\ref{LemmaComplIrrGInv} to $N \subseteq R$,
we obtain a $G$-invariant complementary subspace $S \subseteq R$
such that $[B, S] \subseteq S$.
Thus $[B, S] \subseteq S \cap [L, R] \subseteq S \cap N = 0$.
\end{proof}

Therefore, $L=B\oplus S\oplus N$ (direct sum of subspaces).

Let $M$ be an $L$-module and  let $T$ be a subspace of $L$.
Denote $\Ann_T M := (\Ann M) \cap T$.
Lemma~\ref{LemmaIrrAnnBS} is a $G$-invariant analog of~\cite[Lemma 4]{ZaiLie}.

\begin{lemma} \label{LemmaIrrAnnBS}
Let $J \subseteq I \subseteq L$ be $G$-invariant ideals
such that  $I/J$ is an irreducible $L$-module with $G$-action.
Then \begin{enumerate}
\item $\Ann_B (I/J)$ and $\Ann_S (I/J)$ are $G$-invariant subspaces
of $L$; \label{BSInv}
\item $\Ann (I/J)=\Ann_B (I/J)\oplus \Ann_S (I/J)
\oplus N$. \label{BSDecomp}
\end{enumerate}
\end{lemma}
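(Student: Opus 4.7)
The plan is to first establish that $\Ann(I/J)$ is itself a $G$-invariant subspace of $L$, from which part~(\ref{BSInv}) follows. Indeed, for $a\in\Ann(I/J)$ and $g\in G$, the relation $a^g\cdot v = (a\cdot v^{g^{-1}})^g = 0$ shows $a^g \in \Ann(I/J)$. Then $\Ann_B(I/J)=\Ann(I/J)\cap B$ and $\Ann_S(I/J)=\Ann(I/J)\cap S$ are intersections of two $G$-invariant subspaces (using that $B$ is $G$-invariant by Taft's theorem and $S$ is $G$-invariant by Lemma~\ref{LemmaRS}), hence are $G$-invariant.

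For part~(\ref{BSDecomp}), my first step would be to prove that $N\subseteq \Ann(I/J)$. Consider $U := N\cdot (I/J)$. Since $N$ is a $G$-invariant ideal of $L$, for any $a\in L$, $n\in N$, $v\in I/J$, $g\in G$, one has $a\cdot(n\cdot v)=[a,n]\cdot v + n\cdot(a\cdot v) \in U$ and $(n\cdot v)^g = n^g\cdot v^g \in U$, so $U$ is a $G$-invariant $L$-submodule of $I/J$. Irreducibility forces $U=0$ or $U=I/J$; iterating the latter gives $N^k(I/J)=0$ by nilpotency of $N$, contradicting $I/J\ne 0$. Therefore $U=0$, i.e., $N\subseteq\Ann(I/J)$.

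Next I would exploit the structure of $L/N$. By Lemma~\ref{LemmaRS}, $[B,S]=0$, and $[S,S]\subseteq [R,R]\subseteq [L,R]\subseteq N$, so $L/N$ is the Lie algebra direct sum $\bar B\oplus\bar S$ with $\bar B\cong B$ semisimple and $\bar S\cong S$ abelian. Since $N\subseteq\Ann(I/J)$, the representation $\pi\colon L\to\End_F(I/J)$ factors through $L/N$, and consequently $[\pi(B),\pi(S)]=0$ inside $\End_F(I/J)$.

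Finally, take any $x\in\Ann(I/J)$ and decompose it uniquely as $x=b+s+n$ with $b\in B$, $s\in S$, $n\in N$. Since $n\in\Ann(I/J)$ by the previous step, $\pi(b)+\pi(s)=0$. Because $\pi(s)$ commutes with $\pi(B)$, so does $\pi(b)=-\pi(s)$; hence $\pi(b)$ lies in the center of the Lie subalgebra $\pi(B)\subseteq\End_F(I/J)$. But $\pi(B)$ is a homomorphic image of the semisimple Lie algebra $B$, so it is semisimple and has trivial center. Therefore $\pi(b)=0$ and $\pi(s)=0$, i.e., $b\in\Ann_B(I/J)$ and $s\in\Ann_S(I/J)$. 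Combined with the directness of $L=B\oplus S\oplus N$, this gives~(\ref{BSDecomp}). The only non-bookkeeping step is the last Schur-type argument that $\pi(b)$ is central in a semisimple Lie subalgebra of $\End_F(I/J)$; the rest rests on Lemmas~\ref{LemmaComplIrrGInv} and~\ref{LemmaRS} and the irreducibility hypothesis.
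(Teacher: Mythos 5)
Your proof is correct and follows essentially the same approach as the paper's: the decisive step in both is that $\varphi(b)+\varphi(s)=0$ together with $[B,S]=0$ forces $\varphi(b)$ into the center of the semisimple algebra $\varphi(B)$, hence $\varphi(b)=\varphi(s)=0$. The only minor difference is that you re-derive $N\subseteq\Ann(I/J)$ from scratch (submodule argument plus Engel-type nilpotency of $\ad_L n$ for $n$ in a nilpotent ideal), while the paper simply cites the standard fact that the nilpotent radical annihilates composition factors of the adjoint representation.
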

\begin{proof}
Since $I/J$ is a module with $G$-action,
$\Ann (I/J)$,
$\Ann_B (I/J)$, and $\Ann_S (I/J)$ are $G$-invariant.
Moreover $[N, I] \subseteq J$ since $N$ is a nilpotent ideal
 and $I/J$ is a composition factor of the adjoint representation.
Hence $N \subseteq \Ann(I/J)$. In order to prove the lemma, it is sufficient to show that if $b+s \in \Ann (I/J)$, $b \in B$, $s \in S$, then
$b,s \in \Ann (I/J)$.
Denote
$\varphi \colon L \to \mathfrak{gl}(I/J)$. Then $\varphi(b)+\varphi(s)=0$
and $$[\varphi(b), \varphi(B)]=[-\varphi(s), \varphi(B)]=0.$$
Hence $\varphi(b)$ belongs to the center of $\varphi(B)$ and $\varphi(b)=\varphi(s)=0$ since $\varphi(B)$ is semisimple.
Thus $b,s \in \Ann (I/J)$ and the lemma is proved.
\end{proof}

\begin{lemma} \label{LemmaRedIrr}
Let $L_0=B_0 \oplus R_0$ be a finite dimensional reductive Lie algebra
with $G$-action, $B_0$ be a maximal semisimple $G$-subalgebra, and
$R_0$ be the center of $L_0$.
Let $M$ be a finite dimensional irreducible $L_0$-module with $G$-action.
Then \begin{enumerate}
\item \label{RedSum} $M=M_1 \oplus \ldots \oplus M_q$ for some
$L_0$-submodules $M_i$, $1 \leqslant i \leqslant q$;
\item \label{RedScalar} elements of $R_0$ act on each $M_i$ by scalar operators;
\item for every $1 \leqslant i \leqslant q$ and $g\in G$ there exists
such $1 \leqslant j \leqslant q$ that $M_i^g = M_j$
and this action of $G$ on the set $\lbrace M_1, \ldots, M_q \rbrace$ is transitive. \label{RedGAction}
\end{enumerate}
\end{lemma}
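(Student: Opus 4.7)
The plan is to take the decomposition of $M$ into its isotypic components as an ordinary $L_0$-module (forgetting the $G$-action) and to show that $G$ permutes these components transitively. The first step is to upgrade the given irreducibility hypothesis and prove that $M$ is completely reducible as an $L_0$-module. Since $\dim M < \infty$, we may pick a simple $L_0$-submodule $W \subseteq M$. The compatibility $(a\cdot v)^g = a^g\cdot v^g$ implies that for every $g \in G$ the translate $W^g$ is again a simple $L_0$-submodule of $M$, so $\sum_{g\in G} W^g$ is both $L_0$- and $G$-invariant; by hypothesis it therefore equals $M$. Being a finite sum of simple submodules, $M$ is semisimple as an $L_0$-module.

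Next I would write the isotypic decomposition $M = N_1 \oplus \ldots \oplus N_q$, grouping together all simple $L_0$-submodules of the same isomorphism class. This yields~(\ref{RedSum}). For~(\ref{RedScalar}), the algebraically closed field $F$ lets us apply Schur's lemma to each simple summand: for $r \in R_0$, which lies in the centre of $L_0$, the action of $r$ commutes with $L_0$ and hence acts on each simple $L_0$-submodule as a scalar $\chi_i(r) \in F$. Since all simple submodules of $N_i$ are pairwise isomorphic, these scalars coincide on $N_i$, so $r$ acts on $N_i$ as $\chi_i(r)\cdot \id_{N_i}$.

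For~(\ref{RedGAction}), the operation $V \mapsto V^g$ takes simple $L_0$-submodules to simple $L_0$-submodules (again using $(a\cdot v)^g = a^g\cdot v^g$ and the $G$-invariance of $L_0$) and preserves isomorphism type up to a twist depending only on $g$, so $G$ permutes the finite set $\{N_1, \ldots, N_q\}$. The sum of any single $G$-orbit of components is $L_0$- and $G$-invariant, hence equals $M$ by irreducibility, proving transitivity. The main subtlety is the opening semisimplicity argument: a module over a reductive Lie algebra need not be completely reducible in general (the centre may act non-semisimply), and the proof crucially uses both the irreducibility of $M$ as an $L_0$-module with $G$-action and the compatibility of the two actions to force complete reducibility, which is what then makes Schur's lemma available on each $N_i$.
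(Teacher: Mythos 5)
Your proof is correct but takes a genuinely different route from the paper's. The paper proves assertion~(\ref{RedSum}) by a hands-on operator-theoretic argument: writing $\varphi \colon L_0 \to \mathfrak{gl}(M)$, it takes the Jordan decomposition $\varphi(r_i) = r'_i + r''_i$ of a basis of $\varphi(R_0)$, shows the nilpotent parts $r''_i$ generate a nilpotent $G$-invariant ideal $K$ in the enveloping algebra, forces $K=0$ from the irreducibility of $M$ as a module with $G$-action, concludes that $\varphi(R_0)$ consists of commuting diagonalizable operators, and then takes the $M_i$ to be the common eigenspaces of $\varphi(R_0)$, so that~(\ref{RedScalar}) is built in by construction. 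Your version instead establishes complete $L_0$-reducibility of $M$ abstractly (a simple $L_0$-submodule $W$ exists by finite-dimensionality, and $\sum_{g\in G} W^g$ is a nonzero $G$-invariant $L_0$-submodule, hence all of $M$), takes the $M_i$ to be the isotypic components of $M$ as an ordinary $L_0$-module, and gets~(\ref{RedScalar}) from Schur's lemma over the (implicitly algebraically closed) field $F$. Part~(\ref{RedGAction}) is handled essentially the same way in both arguments: $G$ permutes the chosen components and irreducibility forces the action to be transitive.

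Both constructions produce valid decompositions, although they are not literally the same: your isotypic decomposition refines the paper's eigenspace decomposition, since two non-isomorphic simple $L_0$-submodules may share a central character and hence fall in the same common eigenspace while lying in distinct isotypic components. Your route is shorter and more structural; the paper's route has the advantage of explicitly establishing that $\varphi(R_0)$ consists of semisimple operators, a fact obtained via a Jordan-decomposition argument that the paper reuses in a closely related form later (Lemma~\ref{LemmaJordanDecomp}), so its choice here is stylistically consistent with the rest of the paper. One small point you should tighten if you write this up: the phrase that the twist $V \mapsto V^g$ ``preserves isomorphism type up to a twist'' needs to be unwound to the actual statement you use, namely that $V \mapsto V^g$ induces a well-defined bijection on isomorphism classes of simple $L_0$-modules (so $N_i^g \subseteq N_j$ for a unique $j$), and equality $N_i^g = N_j$ follows by applying the same observation to $g^{-1}$.
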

\begin{proof}
Denote by $\varphi$ the homomorphism $L_0 \to \mathfrak{gl}(M)$.
Then $\varphi$ is a homomorphism of $G$-representations.
 We claim that $\varphi(R_0)$ consist of semisimple
operators.
Let $r_1, \ldots, r_t$ be a basis in $R_0$. Consider the Jordan decomposition $\varphi(r_i)=r'_i+r''_i$ where each $r'_i$ is semisimple, each $r''_i$ is nilpotent,
and both are polynomials of $\varphi(r_i)$ without a constant term \cite[Section 4.2]{Humphreys}. Since each $\varphi(r_i)$ commutes with all operators $\varphi(a)$, $a\in L_0$, the elements $(r''_i)^g$, $1\leqslant i \leqslant t$, $g \in G$, generate a nilpotent
$G$-invariant associative ideal $K$ in the enveloping algebra $A \subseteq \End_F(M)$ of the Lie algebra $\varphi(L_0)$.
Suppose $KM \ne 0$. Then for some $\varkappa \in \mathbb N$
we have $K^{\varkappa+1}M=0$, but $K^{\varkappa}M\ne 0$.
Note that $K^{\varkappa}M$ is a non-zero $G$-invariant
$L_0$-submodule. Thus $K^{\varkappa}M=M$
and $KM=K^{\varkappa+1}M=0$. Since $K \subseteq \End_F(M)$,
we obtain $K=0$.

Therefore $\varphi(r_i)=r'_i$ are commuting semisimple operators.
They have a common basis of eigenvectors. Hence we can choose
subspaces $M_i$, $1 \leqslant i \leqslant q$, $q\in \mathbb N$,
such that $$M=M_1 \oplus \ldots \oplus M_q,$$ and each $M_i$
is the intersection of eigenspaces of $\varphi(r_i)$.
Note that $[\varphi(r_i),\varphi(x)]=0$ for all $x\in L_0$.
Thus $M_i$ are $L_0$-submodules and Propositions~\ref{RedSum} and \ref{RedScalar} are proved.

For every $M_i$ we can define a linear function $\alpha_i \colon R_0 \to F$
such that $\varphi(r)m=\alpha_i(r)m$ for all $r \in R_0$ and $m \in M_i$.
Then $M_i = \bigcap_{r\in R_0} \ker(\varphi(r)-\alpha_i(r)\cdot 1)$
and $$M_i^g = \bigcap_{r\in R_0} \ker(\varphi(r^g)-\alpha_i(r)\cdot 1)
= \bigcap_{\tilde r\in R_0}
 \ker(\varphi(\tilde r)-\alpha_i(\tilde r^{g^{-1}})\cdot 1)$$
 where $\tilde r=r^g$.
Therefore, $M_i^g$ must coincide with $M_j$ for some $1 \leqslant j \leqslant q$.
The module $M$ is irreducible with respect to $L_0$- and $G$-action that implies Proposition~\ref{RedGAction}.
\end{proof}

\begin{lemma}\label{LemmaLR} Let $W$ be a finite dimensional
$L$-module with $G$-action. Let $\varphi \colon L \to \mathfrak{gl}(W)$
 be the corresponding homomorphism.  Denote by $A$ the associative
subalgebra of $\End_F(W)$ generated by the operators from $\varphi(L)$
and $G$. Then
$\varphi([L, R]) \subseteq J(A)$ where $J(A)$ is the Jacobson radical of $A$.
\end{lemma}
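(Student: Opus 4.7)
The plan is to split the proof into two parts: first establish that the two-sided ideal of the enveloping algebra $A_0 \subseteq A$ of $\varphi(L)$ generated by $\varphi([L,R])$ is nilpotent, without worrying about $G$; then upgrade to nilpotence inside the bigger algebra $A$, using that the $G$-action enters $A$ as inner automorphisms of $A_0$.

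For the first part, I would fix a composition series $0 = W_0 \subset W_1 \subset \cdots \subset W_n = W$ of $W$ as an $L$-module, disregarding the $G$-structure. On each irreducible subquotient $W_i/W_{i-1}$ the image $\varphi(R)$ acts by scalar operators: in characteristic zero, Lie's theorem provides a common eigenvector for $\varphi(R)$, and the standard Dynkin-style trace argument forces the corresponding weight $\lambda \colon R \to F$ to vanish on $[L,R]$, so the weight space is $L$-invariant and hence, by irreducibility, is the whole subquotient. Consequently $\varphi([L,R]) = [\varphi(L),\varphi(R)]$ annihilates each $W_i/W_{i-1}$, which is the same as saying that every $\xi \in \varphi([L,R])$ sends $W_i$ into $W_{i-1}$. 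Since each $W_i$ is an $L$-submodule it is $A_0$-stable, and so the two-sided ideal $I_0 := A_0\,\varphi([L,R])\,A_0$ of $A_0$ also shifts the filtration one step down; iterating $n$ times gives $I_0^n = 0$.

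For the $G$-equivariant upgrade, I would exploit the identity $\rho(g)\varphi(a) = \varphi(a^g)\rho(g)$ (a direct consequence of $(a \cdot v)^g = a^g \cdot v^g$, where $\rho \colon G \to \mathrm{GL}(W)$ denotes the $G$-action on $W$), which shows that $\rho(FG)$ normalizes $A_0$ and that $A = A_0\, \rho(FG) = \rho(FG)\, A_0$. Because $[L,R]$ is a $G$-invariant ideal of $L$, the set $\varphi([L,R])$ and the ideal $I_0$ are $G$-invariant in $A_0$, so $\rho(g)\,I_0 = I_0\,\rho(g)$ for every $g \in G$. Therefore $I := I_0\,\rho(FG)$ is a two-sided ideal of $A$ containing $\varphi([L,R])$, and the commutation relation lets one collect all group factors on the right inside any product, yielding $I^k = I_0^k\,\rho(FG)$. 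In particular $I^n = 0$, so $I$ is a nilpotent ideal of the finite dimensional algebra $A$, which forces $I \subseteq J(A)$ and gives the lemma.

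The one point requiring genuine care is the scalar action of $\varphi(R)$ on each irreducible $L$-subquotient, which is where the characteristic zero hypothesis is essential; everything else, namely the filtration argument yielding $I_0^n = 0$ and the bookkeeping identity $I^k = I_0^k\,\rho(FG)$, is essentially formal once one knows that $[L,R]$ is $G$-invariant and that $G$ acts on $A_0$ by inner automorphisms of $A$.
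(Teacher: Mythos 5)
Your proof is correct and follows the same two-stage architecture as the paper's: first show that $\varphi([L,R])$ annihilates every composition factor of an (ordinary, not $G$-invariant) composition chain of $W$, so that the two-sided ideal of $A_0$ generated by $\varphi([L,R])$ strictly shifts the filtration and is therefore nilpotent; then transport nilpotency to $A$ by using the commutation rule $\rho(g)\varphi(a)=\varphi(a^g)\rho(g)$ together with the $G$-invariance of $[L,R]$ to push group elements to the right. The one place where you genuinely deviate is the justification of the first step. The paper invokes E.~Cartan's theorem (Goto--Grosshans, Prop.~1.4.11): on an irreducible $L$-subquotient, $\varphi_i(L)$ is reductive (semisimple or semisimple plus center), so $\varphi_i([L,L])$ is semisimple and cannot contain the nonzero image of a solvable ideal, whence $\varphi_i([L,L]\cap R)=0$ and $\varphi_i([L,R])=0$. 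You instead apply Lie's theorem to the solvable ideal $R$ to produce a common eigenvector, then the trace/invariance argument to show the weight $\lambda$ vanishes on $[L,R]$ and the weight space is $L$-stable, hence the whole subquotient; this gives $\varphi_i([L,R])=0$ directly. Both routes are classical, rely on $\operatorname{char}F=0$, and (in your case, via Lie's theorem) on $F$ being algebraically closed, which is in force in this part of the paper. Your packaging of the $G$-step as the explicit ideal $I=I_0\,\rho(FG)$ with $I^k=I_0^k\,\rho(FG)$ is a cleaner formulation of the same computation the paper carries out on generic products; it makes the two-sided ideal structure and the nilpotency index transparent. No gaps.
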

\begin{proof}
Let $W = W_0 \supseteq W_1 \supseteq W_2 \supseteq \ldots \supseteq W_t = \left\lbrace 0 \right\rbrace$
be a composition chain in $W$ of not necessarily $G$-invariant
$L$-submodules.
Then each $W_i/W_{i+1}$ is an irreducible $L$-module.
Denote the corresponding homomorphism by $\varphi_i \colon L \to
\mathfrak{gl}(W_i/W_{i+1})$. Then by E.~Cartan's theorem~\cite[Proposition~1.4.11]{GotoGrosshans},
$\varphi_i (L)$ is semisimple or the direct sum of a semisimple ideal and
the center of $\mathfrak{gl}(W_i/W_{i+1})$.
Thus $\varphi_i ([L, L])$ is semisimple
and $\varphi_i ([ L, L ] \cap R) = 0$.
Since $[L, R] \subseteq [L, L] \cap R$,
we have $\varphi_i([L, R])=0$ and $[L,R]W_i \subseteq W_{i+1}$.
Denote by $\rho \colon G \to \mathrm{GL}(W)$ the homomorphism
corresponding to $G$-action.
The associative $G$-invariant ideal of $A$
 generated by $\varphi([L, R])$ is nilpotent since
 for any $a_i \in \varphi([L, R])$, $b_{ij}\in\varphi(L)$, $g_{ij}\in G$
 we have
$$
a_1 \bigl(\rho(g_{10})b_{11}\rho(g_{11}) \ldots \rho(g_{1,s_1-1})b_{1,s_1}
\rho(g_{1,s_1})\bigr) a_2 \ldots
$$ $$
a_{t-1}\bigl(\rho(g_{t-1,0})b_{t-1,1}\rho(g_{t-1,1}) \ldots
 \rho(g_{t-1,s_{t-1}-1})b_{t-1,s_{t-1}}\rho(g_{t-1,s_{t-1}})\bigr) a_t =$$ $$
a_1 \bigl(b_{11}^{g_{10}} \ldots b_{1,s_1}^{g'_{1,s_1}}
\bigr) a_2^{g_2} \ldots
a_{t-1}^{g_{t-1}}
\bigl(b_{t-1,1}^{g'_{t-1,1}} \ldots
 b_{t-1,s_{t-1}}^{g'_{t-1,s_{t-1}}}\bigr) a_t^{g_t} \rho(g_{t+1}) = 0
$$ where $g_i, g'_{ij} \in G$ are products of $g_{ij}$ obtained
using the property $\rho(g)bw = b^g \rho(g)w$ where
$g\in G$, $b\in \varphi(L)$, $w \in W$.
Thus $\varphi([L, R]) \subseteq J(A)$.
\end{proof}

\section{Multiplicities of irreducible characters in $\chi^G_n(L)$}
\label{SectionMult}

The aim of the section is to prove

\begin{theorem}\label{TheoremMult}
Let $L$ be a finite dimensional Lie $G$-algebra
over a field $F$ of characteristic $0$ where $G$ is a finite group. Then there exist constants $C > 0$, $r \in \mathbb N$ such that
 $$\sum_{\lambda \vdash n} m(L,G,\lambda)
\leqslant Cn^r$$ for all $n \in \mathbb N$.
\end{theorem}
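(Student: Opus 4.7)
The plan is to upper-bound $m(L,G,\lambda)$ by the multiplicity of $M(\lambda)$ in the ambient $FS_n$-module $\Hom_F(L^{\otimes n},L)$ and then compute the latter via Schur-Weyl duality. Exactly as in the proof of Lemma~\ref{LemmaCodimDim}, evaluation of multilinear $G$-polynomials gives a linear map
\[
V^G_n \to \Hom_F(L^{\otimes n},L),\qquad f\mapsto\bigl((a_1,\ldots,a_n)\mapsto f(a_1,\ldots,a_n)\bigr),
\]
whose kernel is $V^G_n\cap \Id^G(L)$; with $S_n$ acting on the left by permuting the indices of the variables and on the right by permuting the tensor factors of $L^{\otimes n}$, this map is $S_n$-equivariant, so $m(L,G,\lambda)$ is bounded by the multiplicity of $M(\lambda)$ in $\Hom_F(L^{\otimes n},L)$.

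Next, I would decompose $\Hom_F(L^{\otimes n},L)\cong(L^{\otimes n})^{*}\otimes_F L$ with $S_n$ acting only on the first factor. Because every irreducible $FS_n$-module is self-dual in characteristic~$0$, the multiplicity of $M(\lambda)$ in $(L^{\otimes n})^{*}$ equals its multiplicity in $L^{\otimes n}$, which by Schur-Weyl duality equals $\dim S_\lambda(L)$, where $S_\lambda$ is the Schur functor corresponding to $\lambda$. This gives
\[
m(L,G,\lambda)\leqslant (\dim L)\cdot \dim S_\lambda(L).
\]

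Two classical facts about Schur functors then finish the argument. First, $\dim S_\lambda(L)=0$ whenever $\lambda$ has more than $\dim L$ rows, so only partitions $\lambda\vdash n$ with $\lambda'_1\leqslant \dim L$ can contribute, and the number of such partitions is $O(n^{\dim L})$. Second, for such $\lambda$ Weyl's dimension formula bounds $\dim S_\lambda(L)$ by $(n+\dim L)^{\binom{\dim L}{2}}$, polynomial in $n$. Multiplying and summing over $\lambda$ yields the desired polynomial estimate $\sum_{\lambda\vdash n} m(L,G,\lambda)\leqslant Cn^r$.

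The one point to verify carefully is the $S_n$-equivariance of the evaluation map; this comes down to matching the convention for how $S_n$ acts on $V^G_n$ with the action $\sigma\cdot(a_1\otimes\cdots\otimes a_n)=a_{\sigma^{-1}(1)}\otimes\cdots\otimes a_{\sigma^{-1}(n)}$ on the tensor power, and is routine. Observe that neither $G$ nor the specific form of the $G$-action enters the final bound except through $\dim L$: the $G$-action only potentially enlarges $V^G_n$, while the target $\Hom_F(L^{\otimes n},L)$ is unaffected. Since the multiplicities $m(L,G,\lambda)$ are invariant under extension of scalars, one may also freely pass to an algebraically closed base where Schur-Weyl duality is usually stated. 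No real obstacle is expected; the argument is entirely representation-theoretic and does not require any structural information on $L$ beyond its dimension.
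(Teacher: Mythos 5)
Your proposal is correct, but it takes a genuinely different route from the paper's proof. You bound $m(L,G,\lambda)$ by the multiplicity of $M(\lambda)$ in the ambient $S_n$-module $\Hom_F(L^{\otimes n},L)\cong (L^{\otimes n})^*\otimes L$ and then invoke Schur--Weyl duality and Weyl's dimension formula, observing along the way that the $G$-action contributes nothing beyond $\dim L$. This is essentially the argument the paper itself attributes to Berele (for associative algebras with a Hopf-algebra action) and explicitly declines to use, remarking that ``one may repeat those steps for Lie $G$-algebras and prove [the theorem] in that way. However we provide an alternative proof based only on $S_n$-characters.'' The paper's own route is more circuitous: it reduces $G$-colength to \emph{ordinary} colength, which is polynomially bounded by Giambruno--Regev--Zaicev, via a diagonal embedding $S_n\hookrightarrow S_{n|G|}$ (Lemma~\ref{LemmaEmb}), a restriction lemma to products of symmetric groups and the Littlewood--Richardson rule (Lemmas~\ref{LemmaTensorRestrict}, \ref{LemmaTensorColength}), and a bound on the length of inner tensor products (Lemmas~\ref{LemmaInnerTensor}, \ref{LemmaIrrTensor}). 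Your approach is shorter and more conceptual, at the cost of importing Schur--Weyl duality and $\mathrm{GL}$-representation theory; the paper's approach is longer but remains entirely inside symmetric-group character theory and makes visible the direct comparison between $G$-cocharacters and ordinary cocharacters. Your equivariance check and the restriction to partitions with at most $\dim L$ rows are the right points to spell out, and they go through as you indicate; the passage to an algebraically closed field to justify Schur--Weyl duality is harmless because, as you note, multiplicities are stable under scalar extension.
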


\begin{remark}
Cocharacters do not change
upon an extension of the base field $F$
(the proof is completely analogous to \cite[Theorem 4.1.9]{ZaiGia}),
 so we may assume $F$ to be algebraically closed.
\end{remark}

In~\cite[Theorem 13 (b)]{BereleHopf} A. Berele, using
the duality between $S_n$- and
 $\mathrm{GL}_m(F)$-cocharacters~\cite{DrenskySb,BereleHom}, showed that
such sequence for an associative algebra
with an action of a Hopf algebra is polynomially bounded.
One may repeat those steps for Lie $G$-algebras and prove Theorem~\ref{TheoremMult}
in that way.
However we provide an alternative proof based only on $S_n$-characters.

Let $\lbrace e \rbrace$ be the trivial group,
$V_n := V^{\lbrace e \rbrace}_n$,
$\chi_n(L) := \chi^{\lbrace e \rbrace}_n(L)$,
$m(L,\lambda):= m(L,\lbrace e \rbrace,\lambda)$,
$\Id(L):=\Id^{\lbrace e \rbrace}(L)$.
Then, by~\cite[Theorem~3.1]{GiaRegZaicev},
\begin{equation}\label{EqOrdinaryColength}
\sum_{\lambda \vdash n} m(L,\lambda)
\leqslant C_3 n^{r_3}
\end{equation}
 for some $C_3 > 0$ and $r_3 \in \mathbb N$.

Let $G_1 \subseteq G_2$ be finite groups and $W_1$, $W_2$ be
$FG_1$- and $FG_2$-modules respectively.
Then we denote $FG_2$-module
$FG_2 \otimes_{FG_1} W_1$
 by $W_1 \uparrow G_2$.
 Here $G_2$ acts on the first component.
Let $W_2 \downarrow G_1$ be $W_2$ with $G_2$-action restricted to $G_1$.
We use analogous notation for the characters.

Denote by  $\length(M)$ the number of irreducible components
of a module $M$.

Consider the diagonal embedding $\varphi \colon S_n \to S_{n|G|}$,
$$\varphi(\sigma):=\left(\begin{array}{cccc|lllc|c}
1 & 2 & \ldots & n & n+1 & n+2 & \ldots & 2n & \ldots  \\
\sigma(1) & \sigma(2) & \ldots & \sigma(n) &
n+\sigma(1) & n+\sigma(2) & \ldots & n+\sigma(n) & \ldots
\end{array}
\right).$$

 Then we have

\begin{lemma}\label{LemmaEmb}
$$ \sum_{\lambda \vdash n} m(L,G,\lambda)
= \length\left(\frac{V^G_n}{V^G_n \cap \Id^G(L)}\right)
 \leqslant \length\left(\left(\frac{V_{n|G|}}{V_{n|G|} \cap \Id(L)}\right)\downarrow \varphi(S_n)\right).
$$
\end{lemma}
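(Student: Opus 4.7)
The first equality $\sum_{\lambda\vdash n}m(L,G,\lambda)=\length(V^G_n/(V^G_n\cap\Id^G(L)))$ is immediate: since $\mathrm{char}\,F=0$ the group algebra $FS_n$ is semisimple, so the length of a finite-dimensional $FS_n$-module equals the total multiplicity of its irreducible summands. For the inequality my plan is to construct an $FS_n$-equivariant linear map $\Theta\colon V^G_n\to V_{n|G|}$ (with $S_n$ acting on $V_{n|G|}$ via $\varphi$) that induces an injection of quotients
\[
\bar\Theta\colon\frac{V^G_n}{V^G_n\cap\Id^G(L)}\hookrightarrow\left(\frac{V_{n|G|}}{V_{n|G|}\cap\Id(L)}\right)\downarrow\varphi(S_n);
\]
the length inequality will then follow from monotonicity of length under submodules in a semisimple category.

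To build $\Theta$, identify the formal generators $x_i^{g_k}$ of the free Lie $G$-algebra with the ordinary variables $y_{(k-1)n+i}$ of the free Lie algebra on $n|G|$ letters. Under this identification $\varphi(\pi)$ sends $y_{(k-1)n+i}$ to $y_{(k-1)n+\pi(i)}$, matching the natural $S_n$-action on $V^G_n$. Each spanning commutator $v=[x_{\sigma(1)}^{g_{k_1}},\ldots,x_{\sigma(n)}^{g_{k_n}}]$ uses exactly one variable per row of the grid $\{1,\ldots,n\}\times\{1,\ldots,|G|\}$: in row $i$ the ``main'' variable is $y_{(\kappa(i)-1)n+i}$ with $\kappa(i):=k_{\sigma^{-1}(i)}$. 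I plan to form, for each row $i$, a degree-$|G|$ ``block commutator'' $U_i$ using all $|G|$ variables of that row with the main variable distinguished, and then set $\Theta(v):=[U_{\sigma(1)},U_{\sigma(2)},\ldots,U_{\sigma(n)}]$ in the same outer commutator shape as $v$. Since $\varphi(\pi)$ permutes the rows while preserving positions within each row, $\varphi(\pi)U_i=U'_{\pi(i)}$ holds for the corresponding block commutator $U'$ of $\pi\cdot v$, whence $\varphi(\pi)\Theta(v)=\Theta(\pi\cdot v)$. Injectivity modulo identities is then to be checked by substituting $y_{(k-1)n+i}=a_i^{g_k}$ and showing that the block structure recovers $f(a_1,\ldots,a_n)$ up to a nonzero factor, so that $\Theta(f)\in\Id(L)$ forces $f\in\Id^G(L)$.

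The central technical issue, and what I expect to be the hardest part, is reconciling the $FS_n$-equivariance of $\Theta$ with the requirement that the substitution $y_{(k-1)n+i}=a_i^{g_k}$ does not identically kill the image on $L$. A naive padding by the unused variables in a globally fixed linear order preserves identities but is not $\varphi(S_n)$-equivariant, while symmetrizing or antisymmetrizing the padding restores equivariance at the cost of introducing spurious vanishings on $L$ (for instance $[a_i^{g_k},a_i^{g_{k'}}]$ vanishes whenever the $G$-action sends $a_i$ to a scalar multiple of itself). The correct internal commutator shape of each block $U_i$ has to be chosen carefully so that its evaluation on a sufficiently rich family of substitutions stays nonzero; resolving this is the crux of the construction.
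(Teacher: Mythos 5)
Your first equality is fine: in characteristic $0$ the group algebra $FS_n$ is semisimple, so length equals total multiplicity of irreducibles. For the inequality, however, you are heading in the opposite direction from the paper, and in your direction the argument cannot be completed. The paper's proof works with the substitution $\pi(x_{n(i-1)+t})=x_t^{g_i}$ regarded as an $S_n$-map from $V_{n|G|}\downarrow\varphi(S_n)$ to $V^G_n$ that sends $V_{n|G|}\cap\Id(L)$ into $V^G_n\cap\Id^G(L)$, and then exhibits $V^G_n/(V^G_n\cap\Id^G(L))$ as a \emph{quotient} of $\bigl(V_{n|G|}/(V_{n|G|}\cap\Id(L))\bigr)\downarrow\varphi(S_n)$, from which the length inequality follows. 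You instead try to realize it as a \emph{submodule} via an $S_n$-equivariant embedding $\bar\Theta\colon V^G_n/(V^G_n\cap\Id^G(L))\hookrightarrow\bigl(V_{n|G|}/(V_{n|G|}\cap\Id(L))\bigr)\downarrow\varphi(S_n)$.

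The embedding you seek does not exist in general, so the difficulty you flag as ``the crux'' is not a technical nuisance to finesse but a genuine obstruction. Each block $U_i$, after the substitution $y_{(k-1)n+i}=a_i^{g_k}$, is a degree-$|G|$ Lie word in the $|G|$ elements $a_i^{g_1},\dots,a_i^{g_{|G|}}$; such words span a space of dimension at most $(|G|-1)!$, and for $|G|=2$ that is a single line, which cannot carry the two independent images needed to separate $x_i^{e}$ from $x_i^{g}$. Concretely, take $n=1$ and $G=\mathbb Z_2=\{e,g\}$ acting on $L$ by a nontrivial automorphism; then $V^G_1=\langle x_1,x_1^{g}\rangle$ contains no $G$-identity of $L$, so $\dim\bigl(V^G_1/(V^G_1\cap\Id^G(L))\bigr)=2$, while $V_{|G|}=V_2=\langle[x_1,x_2]\rangle$ has $\dim\bigl(V_2/(V_2\cap\Id(L))\bigr)\leqslant 1$. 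No injection of a $2$-dimensional space into a $1$-dimensional one exists, so whatever internal shape you choose for $U_i$, the induced $\bar\Theta$ will have nontrivial kernel. Incidentally, you also misdiagnose the naive padding: a fixed linear ordering within each row \emph{is} $\varphi(S_n)$-equivariant (the diagonal embedding permutes rows as blocks and preserves positions within a row); the failure mode is not equivariance but exactly the vanishing-on-substitution problem, which is unavoidable.

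In short, keep the semisimplicity observation, but discard the embedding strategy; the map the paper uses goes from $V_{n|G|}$ \emph{onto} $V^G_n$, realizing the $G$-module as a homomorphic image rather than a submodule, and this sidesteps the dimensional barrier entirely.
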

\begin{proof}
Consider $S_n$-isomorphism $\pi \colon (V_{n|G|}\downarrow \varphi(S_n)) \to V_n^G$
 defined by $\pi(x_{n(i-1)+t})=x^{g_i}_t$
 where $G=\lbrace g_1, g_2, \ldots, g_{|G|} \rbrace$,
 $1 \leqslant t \leqslant n$. Note that $\pi(V_{n|G|} \cap \Id(L))
 \subseteq V^G_n \cap \Id^G(L)$.
 Thus $FS_n$-module $\frac{V^G_n}{V^G_n \cap \Id^G(L)}$
 is a homomorphic image of $FS_n$-module
 $\left(\frac{V_{n|G|}}{V_{n|G|} \cap \Id(L)}\right)\downarrow \varphi(S_n)$.
 \end{proof}

 Hence it is sufficient to prove
 that  $\length\left(\left(\frac{V_{n|G|}}{V_{n|G|} \cap \Id(L)}\right)\downarrow \varphi(S_n)
 \right)$ is polynomially bounded.
However, we start with the study of the restriction on the larger subgroup $$S\{1,\ldots,n\}  \times S\{n+1,\ldots,2n\} \times \ldots \times S\{n(|G|-1),\ldots,n|G|\}
 \subseteq S_{n|G|}$$ that we denote by $(S_n)^{|G|}$.

This is a particular case of a more general situation.
Let $m=m_1 + \ldots + m_t$, $m_i \in \mathbb N$.
 Then we have a natural embedding $S_{m_1} \times \ldots \times S_{m_t}
\hookrightarrow S_m$. Irreducible representations of
$S_{m_1} \times \ldots \times S_{m_t}$
 are isomorphic to $M(\lambda^{(1)})\sharp
 \ldots \sharp M(\lambda^{(t)})$ where $\lambda^{(i)}\vdash m_i$.
 Here $$M(\lambda^{(1)})\sharp
 \ldots \sharp M(\lambda^{(t)}) \cong M(\lambda^{(1)})\otimes
 \ldots \otimes M(\lambda^{(t)})$$ as a vector space and
 $S_{m_i}$ acts on $M(\lambda^{(i)})$.
 Denote by $\chi(\lambda^{(1)}) \sharp
 \ldots \sharp \chi(\lambda^{(t)})$ the character of $M(\lambda^{(1)})\sharp
 \ldots \sharp M(\lambda^{(t)})$.

 Analogously, $\chi(\lambda^{(1)}) \hatotimes
 \ldots \hatotimes \chi(\lambda^{(t)})$
 is the character of $FS_m$-module
$$ M(\lambda^{(1)})\hatotimes
 \ldots \hatotimes M(\lambda^{(t)})
 := (M(\lambda^{(1)})\sharp
 \ldots \sharp M(\lambda^{(t)})) \uparrow S_m.$$

Note that if $m_1 = \ldots = m_t = k$, one can define
the {\itshape inner tensor product}, i.e.
$$M(\lambda^{(1)})\otimes
 \ldots \otimes M(\lambda^{(t)})$$
 with the diagonal $S_k$-action.
 The character of this $FS_k$-module equals $\chi(\lambda^{(1)})
\ldots \chi(\lambda^{(t)})$.

 Recall that irreducible characters of any
finite group $G_0$ are orthonormal with respect
to the scalar product
$(\chi, \psi)=\frac{1}{|G_0|}\sum_{g\in G_0} \chi(g^{-1})\psi(g)$.

Denote by $\lambda^T$ the transpose partition of $\lambda \vdash n$.
Then $\lambda^T_1$ equals the height of the first column of $D_\lambda$.

 \begin{lemma}\label{LemmaTensorRestrict}
 Let $h, t \in \mathbb N$.
 There exist $C_4 > 0$, $r_4 \in \mathbb N$
 such that for all $\lambda \vdash m$, $\lambda^{(1)} \vdash m_1$, \ldots, $\lambda^{(t)} \vdash m_t$, where $D_\lambda$ lie in the strip of height $h$, i.e. $\lambda^T_1
 \leqslant h$, and $m_1+m_2+\ldots + m_t=m$, we have
 $$\Bigl(\chi(\lambda) \downarrow (S_{m_1} \times \ldots \times S_{m_t}),
 \ \chi(\lambda^{(1)}) \sharp
 \ldots \sharp \chi(\lambda^{(t)})\Bigr) =
 \Bigl(\chi(\lambda),\ \chi(\lambda^{(1)}) \hatotimes
 \ldots \hatotimes \chi(\lambda^{(t)}) \Bigr)
 \leqslant  C_4 m^{r_4}.$$ If
 $\lambda \vdash m$, $\lambda^{(1)} \vdash m_1$, \ldots, $\lambda^{(t)} \vdash m_t$, $m_1+m_2+\ldots+ m_t=m$, and
 $$\Bigl(\chi(\lambda) \downarrow (S_{m_1} \times \ldots \times S_{m_t}),\ \chi(\lambda^{(1)}) \sharp
 \ldots \sharp \chi(\lambda^{(t)})\Bigr)=
 \Bigl(\chi(\lambda),\ \chi(\lambda^{(1)}) \hatotimes
 \ldots \hatotimes \chi(\lambda^{(t)}) \Bigr)\ne 0$$
 then
 $\left(\lambda^{(i)}\right)^T_1 \leqslant \lambda^T_1$ for all $1 \leqslant i \leqslant t$
 and $\lambda^T_1 \leqslant \sum_{i=1}^t \left(\lambda^{(i)}\right)^T_1$.
 \end{lemma}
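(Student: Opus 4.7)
I recognize both sides of the claimed equality as the same iterated Littlewood--Richardson coefficient $c^\lambda_{\lambda^{(1)}, \ldots, \lambda^{(t)}}$, deduce the height inequalities from standard LR-tableau arguments, and exploit the bounded-height hypothesis $\lambda^T_1 \leqslant h$ to obtain the polynomial bound via $\mathrm{GL}_h$-weight-space estimates.

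The equality of the two scalar products is Frobenius reciprocity applied to $S_{m_1} \times \cdots \times S_{m_t} \hookrightarrow S_m$, combined with the definition $\chi(\lambda^{(1)}) \hatotimes \cdots \hatotimes \chi(\lambda^{(t)}) = \bigl(\chi(\lambda^{(1)}) \sharp \cdots \sharp \chi(\lambda^{(t)})\bigr) \uparrow S_m$. The common value equals the iterated LR coefficient, which satisfies the associativity
$$c^\lambda_{\lambda^{(1)}, \ldots, \lambda^{(t)}} = \sum_\mu c^\lambda_{\mu,\, \lambda^{(t)}}\, c^\mu_{\lambda^{(1)}, \ldots, \lambda^{(t-1)}}.$$

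For the height inequalities I treat $t=2$ and then iterate. Suppose $c^\lambda_{\mu\nu} \neq 0$. By definition of LR tableaux, $\mu \subseteq \lambda$, so $\mu^T_1 \leqslant \lambda^T_1$; symmetry of the LR coefficients gives the same inequality with $\nu$ in place of $\mu$. For the other direction, the leftmost column of the skew shape $\lambda/\mu$ contains $\lambda^T_1 - \mu^T_1$ strictly increasing entries drawn from $\{1,\ldots,\ell(\nu)\}$, so $\lambda^T_1 - \mu^T_1 \leqslant \ell(\nu) = \nu^T_1$. An induction on $t$ using the associativity formula extends both inequalities to arbitrary $t$; moreover each intermediate $\mu$ then has height at most $\lambda^T_1 \leqslant h$, which is what makes the induction go through.

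For the polynomial bound I again reduce to the case $t = 2$ and iterate. At each iteration the sum is over intermediate partitions $\mu$ of size at most $m$ with at most $h$ rows, giving only $O(m^{h-1})$ terms. For $t=2$ I interpret $c^\lambda_{\mu\nu}$ as the multiplicity of the irreducible $\mathrm{GL}_h(F)$-module $V_\lambda$ inside $V_\mu \otimes V_\nu$ (all three partitions have at most $h$ rows). This multiplicity is bounded above by the dimension of the $\lambda$-weight space of $V_\mu \otimes V_\nu$, so
$$c^\lambda_{\mu\nu} \leqslant \dim(V_\mu \otimes V_\nu)_\lambda = \sum_{\sigma+\tau=\lambda} K_{\mu\sigma}\, K_{\nu\tau}.$$
Each Kostka number is bounded by the Weyl dimension $\dim V_\mu,\, \dim V_\nu \leqslant C\, m^{h(h-1)/2}$, and the number of decompositions $\sigma+\tau=\lambda$ with $\sigma,\tau$ having at most $h$ nonnegative coordinates is $O(m^{h-1})$. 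Iterating via the associativity formula yields the desired bound $C_4 m^{r_4}$ with constants depending only on $h$ and $t$.

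The main obstacle is this final polynomial bound: without the hypothesis $\lambda^T_1 \leqslant h$, Littlewood--Richardson coefficients can grow exponentially in $m$, so the bounded-height assumption on $\lambda$ must be used essentially at every step, both via Weyl dimension estimates for $\mathrm{GL}_h$-irreducibles and via polynomial counts of bounded-height partitions. The remaining ingredients (Frobenius reciprocity, the two LR-tableau height estimates, and the associativity of LR coefficients) are classical.
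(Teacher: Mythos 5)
Your argument is correct and reaches the same conclusion, but you handle the crux step — bounding a two-factor Littlewood--Richardson coefficient polynomially in $m$ under the bounded-height hypothesis — by a genuinely different route than the paper. Both proofs open identically: Frobenius reciprocity for the equality, the associativity/transitivity of induction (the paper's formula~(\ref{EqLittlewood})) to reduce to iterated two-factor coefficients, and LR-tableau arguments for the height inequalities $\left(\lambda^{(i)}\right)^T_1 \leqslant \lambda^T_1$ and $\lambda^T_1 \leqslant \sum_i\left(\lambda^{(i)}\right)^T_1$. The divergence is in how $c^\lambda_{\mu\nu}$ is bounded: the paper argues directly from the LR insertion algorithm, counting the number of ways to attach the rows of $D_{\lambda^{(t)}}$ as horizontal strips and getting $(m^h)^h = m^{h^2}$; you instead pass to $\mathrm{GL}_h$-representations, observe that $c^\lambda_{\mu\nu}$ is the multiplicity of $V_\lambda$ in $V_\mu \otimes V_\nu$, majorize it by the $\lambda$-weight-space dimension $\sum_{\sigma+\tau=\lambda}K_{\mu\sigma}K_{\nu\tau}$, and bound each Kostka number by the Weyl dimension $O(m^{h(h-1)/2})$. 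Your approach is arguably cleaner and more conceptual, and it makes explicit why the strip hypothesis is essential (it forces $\mu,\nu$ to also live in $\mathrm{GL}_h$ territory). One small inaccuracy that does not affect the result: the number of splittings $\sigma+\tau=\lambda$ with $\sigma,\tau\in\mathbb Z_{\geqslant 0}^h$ is $\prod_{i=1}^h(\lambda_i+1)=O(m^h)$, not $O(m^{h-1})$; either way the bound stays polynomial.
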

 \begin{proof}
 By Frobenius reciprocity, $$\Bigl(\chi(\lambda) \downarrow (S_{m_1} \times \ldots \times S_{m_t}),\ \chi(\lambda^{(1)}) \sharp
 \ldots \sharp \chi(\lambda^{(t)}) \Bigr)=
 \Bigl(\chi(\lambda),\ (\chi(\lambda^{(1)}) \sharp
 \ldots \sharp \chi(\lambda^{(t)}))\uparrow S_m\Bigr)=$$
$$ \Bigl(\chi(\lambda),\ \chi(\lambda^{(1)}) \hatotimes
 \ldots \hatotimes \chi(\lambda^{(t)}) \Bigr).$$

Now we prove the lemma by induction on $t$.
The case $t=1$ is trivial.
Suppose $\Bigl(\chi(\mu),\ \chi(\lambda^{(1)}) \hatotimes
 \ldots \hatotimes \chi(\lambda^{(t-1)}) \Bigr)$
 is polynomially bounded for every $\mu \vdash (m_1+\ldots+m_{t-1})$
 with $\mu^T_1 \leqslant h$.
 We have
 $$\Bigl(\chi(\lambda),\ \chi(\lambda^{(1)}) \hatotimes
 \ldots \hatotimes \chi(\lambda^{(t)}) \Bigr)
 =\Bigl(\chi(\lambda),\ \left(\chi(\lambda^{(1)}) \hatotimes
 \ldots \hatotimes \chi(\lambda^{(t-1)})\right)\hatotimes \chi(\lambda^{(t)}) \Bigr)
 =$$
 \begin{equation}\label{EqLittlewood}
\sum_{\mu \vdash (m_1+\ldots+m_{t-1})}
 \Bigl(\chi(\mu),\ \chi(\lambda^{(1)}) \hatotimes
 \ldots \hatotimes \chi(\lambda^{(t-1)}) \Bigr)
  \Bigl(\chi(\lambda), \chi(\mu)\hatotimes\chi(\lambda^{(t)})\Bigr).
   \end{equation}
 In order to determine
 the multiplicity
 of $\chi(\lambda)$
 in $\chi(\mu) \hatotimes
  \chi(\lambda^{(t)})$,
  we are using the Littlewood~--- Richardson rule
(see the algorithm in~\cite[Corollary~2.8.14]{JamesKerber}).
We cannot obtain $D_\lambda$
if $\left(\lambda^{(t)}\right)^T_1 > \lambda^T_1$ or $\mu^T_1 > \lambda^T_1$,
or $\lambda^T_1 > \left(\lambda^{(t)}\right)^T_1 + \mu^T_1$.
 Suppose the Young diagram
 $D_{\lambda}$
lies in the strip of height $h$. Then
we may consider only the case $\left(\lambda^{(t)}\right)^T_1 \leqslant h$
and  $\mu^T_1 \leqslant h$.
Each time the number of variants to add the boxes
from a row is bounded by $m^{h}$.
Since $\left(\lambda^{(t)}\right)^T_1 \leqslant h$, the second multiplier in~(\ref{EqLittlewood}) is
bounded by $(m^h)^h=m^{h^2}$.
The number of diagrams in the strip of height $h$
is bounded by $m^h$.
Thus the number of
 terms in~(\ref{EqLittlewood}) is bounded
 by $m^h$. Together with the inductive assumption this yields the lemma.
 \end{proof}

  \begin{lemma}\label{LemmaTensorColength}
  There exist $C_5 > 0$, $r_5 \in \mathbb N$ such that
  $$\length\left(\left(\frac{V_{n|G|}}{V_{n|G|} \cap \Id(L)}\right)\downarrow (S_n)^{|G|}
 \right) \leqslant C_5 n^{r_5}$$ for all $n \in \mathbb N$.
 Moreover, if $\left(\lambda^{(i)}\right)^T_1 >
 \dim L$ for some $1 \leqslant i \leqslant |G|$,
 then
 $M(\lambda^{(1)})\sharp
 \ldots \sharp M(\lambda^{(|G|)})$ does not appear in the decomposition.
 \end{lemma}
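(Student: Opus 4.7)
The plan is to combine Lemma~\ref{LemmaTensorRestrict} with the polynomial bound~(\ref{EqOrdinaryColength}) on the total multiplicity in the ordinary cocharacter. Set $h := \dim L$. First I would recall the standard fact that the ordinary cocharacter of any $h$-dimensional algebra is supported on partitions with $\lambda^T_1 \leqslant h$: any multilinear polynomial alternating in more than $h$ variables vanishes upon every substitution from $L$, hence the Young symmetrizer $e^{*}_{T_\lambda}$ annihilates $V_{n|G|}/(V_{n|G|}\cap\Id(L))$ as soon as the first column of $D_\lambda$ is taller than $h$. Consequently $m(L,\lambda)=0$ whenever $\lambda^T_1 > h$.

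Next I would expand
$$\chi_{n|G|}(L)\downarrow (S_n)^{|G|} \;=\; \sum_{\substack{\lambda\vdash n|G| \\ \lambda^T_1 \leqslant h}} m(L,\lambda)\,\bigl(\chi(\lambda)\downarrow(S_n)^{|G|}\bigr),$$
and apply Lemma~\ref{LemmaTensorRestrict} with $t=|G|$ and $m_1=\ldots=m_{|G|}=n$. This gives, for each such $\lambda$, that only irreducible summands $\chi(\lambda^{(1)})\sharp\cdots\sharp\chi(\lambda^{(|G|)})$ with $(\lambda^{(i)})^T_1\leqslant\lambda^T_1\leqslant h$ for every $i$ can appear, and each corresponding multiplicity is at most $C_4(n|G|)^{r_4}$. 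Since the number of partitions of $n$ inside a strip of height $h$ is bounded by $n^h$, the total number of admissible tuples $(\lambda^{(1)},\ldots,\lambda^{(|G|)})$ with each $\lambda^{(i)}\vdash n$ is at most $n^{h|G|}$. Therefore $\length\bigl(\chi(\lambda)\downarrow(S_n)^{|G|}\bigr)$ is polynomially bounded in $n$, uniformly in $\lambda$.

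Summing over $\lambda$ and invoking~(\ref{EqOrdinaryColength}) (applied at level $n|G|$) yields
$$\length\left(\left(\frac{V_{n|G|}}{V_{n|G|}\cap\Id(L)}\right)\downarrow (S_n)^{|G|}\right) \;\leqslant\; \Bigl(\sum_{\lambda\vdash n|G|} m(L,\lambda)\Bigr)\cdot C_4(n|G|)^{r_4}\cdot n^{h|G|} \;\leqslant\; C_5 n^{r_5},$$
which is the first assertion. The \emph{moreover} clause is now immediate: if $(\lambda^{(i)})^T_1 > \dim L$ for some $i$, then the second half of Lemma~\ref{LemmaTensorRestrict} forces $\lambda^T_1 \geqslant (\lambda^{(i)})^T_1 > h$ in every $\lambda$ that could contribute, but for such $\lambda$ we have $m(L,\lambda)=0$, so $\chi(\lambda^{(1)})\sharp\cdots\sharp\chi(\lambda^{(|G|)})$ does not appear in the decomposition.

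No real obstacle is expected: the argument is essentially bookkeeping, once one has Lemma~\ref{LemmaTensorRestrict} and the height-strip property of the Lie cocharacter in hand. The only point requiring care is verifying that the height constraint $\lambda^T_1\leqslant h$ propagates to every $\lambda^{(i)}$, which is exactly the content of the second part of Lemma~\ref{LemmaTensorRestrict}.
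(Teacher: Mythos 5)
Your proof is correct and follows essentially the same approach as the paper: bound $m(L,\lambda)=0$ for $\lambda^T_1>\dim L$, use Lemma~\ref{LemmaTensorRestrict} to control both the multiplicity and the height of each $\lambda^{(i)}$, count tuples in the strip, and invoke~(\ref{EqOrdinaryColength}). The only cosmetic difference is the order of summation (you sum over $\lambda$ first rather than fixing a tuple first), which yields the same bound.
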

\begin{proof}
Fix an $|G|$-tuple
of partitions $(\lambda^{(1)}, \ldots, \lambda^{(|G|)})$,
$\lambda^{(i)} \vdash n$.
Then the multiplicity of $M(\lambda^{(1)})\sharp
 \ldots \sharp M(\lambda^{(|G|)})$ in  $\left(\frac{V_{n|G|}}{V_{n|G|} \cap \Id(L)}\right)\downarrow (S_n)^{|G|}$
equals $$\Bigl(\chi(\lambda^{(1)}) \sharp
 \ldots \sharp \chi(\lambda^{(|G|)}),
\ \chi_{n|G|}(L) \downarrow  (S_n)^{|G|}\Bigr)=$$
\begin{equation}\label{EqMultTensor}
\sum_{\lambda \vdash n|G|}\Bigl(\chi(\lambda^{(1)}) \sharp
 \ldots \sharp \chi(\lambda^{(|G|)}),
 \ \chi(\lambda)\downarrow  (S_n)^{|G|}\Bigr) m(L,\lambda).
\end{equation}

By~\cite[Lemma~3.4]{GiaRegZaicev} (or Lemma~\ref{LemmaUpper} for $G=\langle e \rangle$),
 $m(L, \lambda)=0$ for all $\lambda \vdash n|G|$ with
 $\lambda^T_1 > \dim L$.
Thus Lemma~\ref{LemmaTensorRestrict}
implies that
for all $M(\lambda^{(1)})\sharp
 \ldots \sharp M(\lambda^{(|G|)})$ that appear in $\left(\frac{V_{n|G|}}{V_{n|G|} \cap \Id(L)}\right)\downarrow (S_n)^{|G|}$, the
 Young diagrams $D_{\lambda^{(i)}}$
lie in the strip of height $(\dim L)$.
 Thus the number of different $(\lambda^{(1)}, \ldots, \lambda^{(|G|)})$
that appear in the decomposition of $\left(\frac{V_{n|G|}}{V_{n|G|} \cap \Id(L)}\right)\downarrow (S_n)^{|G|}$ is bounded by $n^{(\dim L)|G|}$.
Together with~(\ref{EqOrdinaryColength}), (\ref{EqMultTensor}),
and Lemma~\ref{LemmaTensorRestrict}, this yields the lemma.
\end{proof}

\begin{lemma}\label{LemmaInnerTensor}
Let $h, k \in \mathbb N$.
There exist $C_6 > 0$, $r_6 \in \mathbb N$ such that
for the inner tensor product $M(\lambda) \otimes M(\mu)$
 of any $FS_n$-modules $M(\lambda)$
and $M(\mu)$, $\lambda, \mu \vdash n$, $\lambda^T_1\leqslant h$, $\mu^T_1 \leqslant k$,
we have $$\length_{S_n}(M(\lambda) \otimes M(\mu)) \leqslant C_6 n^{r_6}$$
and $\Bigl(\chi(\lambda)\chi(\mu), \chi(\nu)\Bigr)=0$ for any $\nu \vdash n$
with $\nu^T_1 > hk$.
\end{lemma}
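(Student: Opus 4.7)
The plan is to realize $M(\lambda)$ and $M(\mu)$ inside suitable tensor powers via Schur--Weyl duality and then observe that the inner (diagonal) tensor product of two such tensor powers is itself a tensor power, to which Schur--Weyl applies again.

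First I would fix vector spaces $U,V$ over $F$ of dimensions $h$ and $k$ respectively. By Schur--Weyl, $U^{\otimes n}\cong\bigoplus_{\pi\vdash n,\ \pi^T_1\leqslant h} W^U_\pi\otimes M(\pi)$ as a $\mathrm{GL}(U)\times S_n$-module, where $W^U_\pi$ is the irreducible polynomial $\mathrm{GL}(U)$-module of highest weight $\pi$ and $S_n$ acts trivially on $W^U_\pi$. Since $\lambda^T_1\leqslant h$, the module $M(\lambda)$ is a direct $FS_n$-summand of $U^{\otimes n}$; similarly $M(\mu)$ is a direct $FS_n$-summand of $V^{\otimes n}$.

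Next I would use the canonical $FS_n$-isomorphism $U^{\otimes n}\otimes V^{\otimes n}\cong(U\otimes V)^{\otimes n}$ that matches the diagonal $S_n$-action on the left with the place-permutation action on the right. Under this isomorphism, the inner tensor product $M(\lambda)\otimes M(\mu)$ becomes a direct $FS_n$-summand of $(U\otimes V)^{\otimes n}$, and $\dim(U\otimes V)=hk$. A second application of Schur--Weyl yields $(U\otimes V)^{\otimes n}\cong\bigoplus_{\nu\vdash n,\ \nu^T_1\leqslant hk} W^{U\otimes V}_\nu\otimes M(\nu)$, so every irreducible constituent $M(\nu)$ of $M(\lambda)\otimes M(\mu)$ satisfies $\nu^T_1\leqslant hk$; this is exactly the second assertion, i.e.\ $\bigl(\chi(\lambda)\chi(\mu),\chi(\nu)\bigr)=0$ whenever $\nu^T_1>hk$.

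For the length bound, I would use subadditivity of length for subquotients: $\length_{S_n}(M(\lambda)\otimes M(\mu))\leqslant\length_{S_n}((U\otimes V)^{\otimes n})=\sum_{\nu\vdash n,\ \nu^T_1\leqslant hk}\dim W^{U\otimes V}_\nu$. By the Weyl dimension formula, $\dim W^{U\otimes V}_\nu$ is a polynomial of degree $\binom{hk}{2}$ in the parts of $\nu$, hence bounded by $C' n^{\binom{hk}{2}}$ uniformly in such $\nu$; the number of $\nu\vdash n$ with at most $hk$ parts is at most $(n+1)^{hk}$. Multiplying, one obtains a bound of the form $C_6 n^{r_6}$ with $r_6=hk+\binom{hk}{2}$. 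The only mildly delicate point is the canonical $FS_n$-isomorphism between $U^{\otimes n}\otimes V^{\otimes n}$ (diagonal action) and $(U\otimes V)^{\otimes n}$ (place-permutation action); once that identification is in hand, no Littlewood--Richardson or Murnaghan--Nakayama calculation is required, which is why this argument is cleaner than working with character products directly.
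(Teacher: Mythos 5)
Your proof is correct, and it follows a genuinely different route from the paper's. The paper's argument stays entirely inside the representation theory of $S_n$: it replaces $M(\mu)$ by the permutation module $FS_n a_{T_\mu} \cong IR_{T_\mu}\uparrow S_n$, applies the projection formula $M(\lambda)\otimes(IR_{T_\mu}\uparrow S_n)\cong (M(\lambda)\downarrow R_{T_\mu})\uparrow S_n$ to trade the inner tensor product for a restriction-then-induction through the Young subgroup $R_{T_\mu}\cong S_{\mu_1}\times\cdots\times S_{\mu_s}$ with $s=\mu^T_1\leqslant k$, and then invokes Lemma~\ref{LemmaTensorRestrict} (Littlewood--Richardson) twice to bound lengths and heights. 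Your proof instead realizes $M(\lambda)$ and $M(\mu)$ as $FS_n$-direct summands of $U^{\otimes n}$ and $V^{\otimes n}$ via Schur--Weyl duality with $\dim U=h$, $\dim V=k$, and uses the canonical $S_n$-equivariant identification $U^{\otimes n}\otimes V^{\otimes n}\cong(U\otimes V)^{\otimes n}$ (diagonal action on the left, place permutation on the right) to embed $M(\lambda)\otimes M(\mu)$ as a summand of $(U\otimes V)^{\otimes n}$ with $\dim(U\otimes V)=hk$; a second application of Schur--Weyl then gives both the height restriction $\nu^T_1\leqslant hk$ and the polynomial length bound $\sum_\nu\dim W^{U\otimes V}_\nu\leqslant C_6n^{r_6}$ with $r_6=hk+\binom{hk}{2}$. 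Both arguments are valid in characteristic zero (Schur--Weyl needs no algebraic closure since the relevant modules are defined over $\mathbb{Q}$). Your route avoids Littlewood--Richardson entirely and gives an explicit exponent, at the cost of importing Schur--Weyl and the Weyl dimension formula; the paper's route is more elementary in its prerequisites and keeps the combinatorial control (the $\sharp$/$\hatotimes$ decomposition) that is reused elsewhere in the section.
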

\begin{proof} Let $T_\mu$ be any Young tableau of the shape $\mu$.
Denote by $IR_{T_\mu}$ the one-dimensional
trivial representation of the Young subgroup (i.e. the row stabilizer)
$R_{T_\mu}$. Then $$FS_n a_{T_\mu} \cong IR_{T_\mu}
\uparrow S_n$$ (see \cite[Section 4.3]{FultonHarris}).
 By \cite[Theorem~38.5]{CurtisReiner},
 $$M(\lambda) \otimes (IR_{T_\mu} \uparrow S_n)
\cong ((M(\lambda)\downarrow R_{T_\mu}) \otimes IR_{T_\mu}) \uparrow S_n.$$
Thus
$$M(\lambda) \otimes M(\mu) \cong M(\lambda) \otimes FS_n e^{*}_{T_\mu}=
M(\lambda) \otimes FS_n b_{T_\mu} a_{T_\mu}
\subseteq M(\lambda) \otimes FS_n a_{T_\mu}
\cong $$ $$M(\lambda) \otimes (IR_{T_\mu} \uparrow S_n)
\cong ((M(\lambda)\downarrow R_{T_\mu}) \otimes IR_{T_\mu}) \uparrow S_n
\cong (M(\lambda)\downarrow R_{T_\mu}) \uparrow S_n.$$
Note that $\length(M(\lambda)\downarrow R_{T_\mu})$
is polynomially bounded by Lemma~\ref{LemmaTensorRestrict}
and $M(\lambda)\downarrow R_{T_\mu}$
is a sum of $M(\varkappa^{(1)}) \sharp \ldots \sharp
M(\varkappa^{(s)})$, $s=\mu^T_1 \leqslant k$, $\varkappa^{(i)}\vdash \mu_i$,
$\left(\varkappa^{(i)}\right)^T_1 \leqslant h$.
Thus
$(M(\lambda)\downarrow R_{T_\mu}) \uparrow S_n$
is a sum of $M(\varkappa^{(1)}) \hatotimes \ldots \hatotimes
M(\varkappa^{(s)})$. Applying Lemma~\ref{LemmaTensorRestrict}
again, we obtain the lemma.
\end{proof}

\begin{lemma}\label{LemmaIrrTensor}
  There exist $C_7 > 0$, $r_7 \in \mathbb N$ satisfying
  the following properties. If
  $(\lambda^{(1)}, \ldots, \lambda^{(|G|)})$
  is an  $|G|$-tuple
 of partitions $\lambda^{(i)} \vdash n$
where $\left(\lambda^{(i)}\right)^T_1 \leqslant \dim L$ for all $1 \leqslant i \leqslant |G|$, then
  $$\length_{S_n}\left(M(\lambda^{(1)})\otimes
 \ldots \otimes M(\lambda^{(|G|)})
 \right) \leqslant C_7 n^{r_7}.$$
\end{lemma}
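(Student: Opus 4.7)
The plan is to prove this by induction on the number of tensor factors, using Lemma~\ref{LemmaInnerTensor} as the engine. Concretely, I will show that if $\nu^{(1)}, \ldots, \nu^{(s)} \vdash n$ are partitions with $\left(\nu^{(i)}\right)^T_1 \leqslant h_i$, then the inner tensor product $M(\nu^{(1)}) \otimes \cdots \otimes M(\nu^{(s)})$ decomposes into irreducible $FS_n$-modules whose column heights are bounded by $h_1 h_2 \cdots h_s$, with total length polynomially bounded in $n$ (the polynomial's degree and leading constant depending only on the $h_i$'s and on $s$).

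The base case $s=1$ is trivial. For the inductive step, I would first apply the induction hypothesis to obtain a decomposition $M(\nu^{(1)}) \otimes \cdots \otimes M(\nu^{(s-1)}) \cong \bigoplus_j M(\mu_j)$ where each $\mu_j^T_1 \leqslant h_1 \cdots h_{s-1}$ and the number of summands (counted with multiplicity) is at most $C n^r$ for some constants depending on $h_1, \ldots, h_{s-1}$. Then I tensor both sides with $M(\nu^{(s)})$ and apply Lemma~\ref{LemmaInnerTensor} to each summand individually, with the roles of $h$ and $k$ played by $h_1 \cdots h_{s-1}$ and $h_s$ respectively. Each $M(\mu_j) \otimes M(\nu^{(s)})$ decomposes into at most $C_6 n^{r_6}$ irreducibles whose column heights are at most $(h_1 \cdots h_{s-1}) h_s$, and summing over the polynomially many $j$ gives a polynomial bound of the same form.

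Applying this to the setting of the lemma, every $\left(\lambda^{(i)}\right)^T_1 \leqslant \dim L$, so we take $h_1 = h_2 = \cdots = h_{|G|} = \dim L$ and $s = |G|$. Since $|G|$ and $\dim L$ are fixed constants, the resulting bound on the length of $M(\lambda^{(1)}) \otimes \cdots \otimes M(\lambda^{(|G|)})$ is of the form $C_7 n^{r_7}$ for suitable constants $C_7 > 0$ and $r_7 \in \mathbb N$, exactly as required.

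The only potentially delicate point is verifying that the column-height constraint required by Lemma~\ref{LemmaInnerTensor} propagates correctly through the induction: each inductive step multiplies the column height bound by $\dim L$, so after $|G|$ steps the intermediate irreducibles are confined to the strip of height at most $(\dim L)^{|G|}$, which is a fixed constant. This keeps the hypotheses of Lemma~\ref{LemmaInnerTensor} satisfied at every step and ensures that each application produces only a polynomial factor, so that their product over a constant number of steps remains polynomial in $n$. No serious obstacle is expected beyond this bookkeeping.
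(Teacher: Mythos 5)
Your proposal is correct and matches the paper's proof almost verbatim: the paper likewise proceeds by induction on the number of tensor factors, writes $M(\lambda^{(1)})\otimes \ldots \otimes M(\lambda^{(t)}) = (M(\lambda^{(1)})\otimes \ldots \otimes M(\lambda^{(t-1)}))\otimes M(\lambda^{(t)})$, and applies Lemma~\ref{LemmaInnerTensor} with $h=(\dim L)^{t-1}$ and $k=\dim L$, which is exactly your multiplicative column-height bookkeeping. Nothing to add.
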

\begin{proof}
Note that $$M(\lambda^{(1)})\otimes
 \ldots \otimes M(\lambda^{(t)})
 = (M(\lambda^{(1)})\otimes
 \ldots \otimes M(\lambda^{(t-1)}))\otimes M(\lambda^{(t)}).$$
 Using induction on $t$ and applying
Lemma~\ref{LemmaInnerTensor} with $h=(\dim L)^{t-1}$ and $k=\dim L$,
we obtain the lemma.
\end{proof}

 \begin{proof}[Proof of Theorem~\ref{TheoremMult}]
The theorem is an immediate
consequence of Lemmas~\ref{LemmaEmb}, \ref{LemmaTensorColength}, and \ref{LemmaIrrTensor}.
\end{proof}

\section{Upper bound}
\label{SectionUpper}

Fix a composition chain of $G$-invariant ideals
$$L=L_0 \supsetneqq L_1 \supsetneqq L_2 \supsetneqq \ldots \supsetneqq
N\supsetneqq \ldots \supsetneqq L_{\theta-1}
 \supsetneqq L_\theta = \{0\}.$$
 Let $\height a := \max_{a \in L_k} k$ for $a \in L$.

\begin{remark}
If $d=d(L)=0$, then $L = \Ann(L_{i-1}/L_i)$
for all $1 \leqslant i \leqslant \theta$ and
 $[a_1, a_2, \ldots, a_n] =0$ for all $a_i \in L$
 and $n \geqslant \theta +1$. Thus $c^G_n(L)=0$
 for all $n \geqslant \theta +1$. Therefore we assume $d > 0$.
\end{remark}

 Let $Y:=\lbrace y_{11}, y_{12}, \ldots, y_{1j_1};\,
 y_{21}, y_{22}, \ldots, y_{2j_2}; \ldots;\,
 y_{m1}, y_{m2}, \ldots, y_{mj_m}\rbrace$,
 $Y_1$, \ldots, $Y_q$, and $\lbrace z_1, \ldots, z_m\rbrace$
 be subsets of $\lbrace x_1, x_2, \ldots, x_n\rbrace$
 such that $Y_i \subseteq Y$, $|Y_i|=d+1$, $ Y_i \cap Y_j = \varnothing$
 for $i \ne j$,
 $Y \cap \lbrace z_1, \ldots, z_m\rbrace = \varnothing$,
  $j_i \geqslant 0$.
  Denote $$f_{m,q}:=\Alt_{1} \ldots \Alt_{q} [[z_1^{g_1}, y_{11}^{g_{11}}, y_{12}^{g_{12}},
  \ldots, y_{1j_1}^{g_{1j_1}}],
 [z_2^{g_2},y_{21}^{g_{21}},y_{22}^{g_{22}},\ldots, y_{2j_2}^{g_{2j_2}}], \ldots,
 $$ $$[ z_m^{g_m}, y_{m1}^{g_{m1}}, y_{m2}^{g_{m2}}, \ldots, y_{mj_m}^{g_{mj_m}}]]$$
 where $\Alt_i$ is the operator of alternation on the variables of $Y_i$,
 $g_i, g_{ij}\in G$.

Let $\varphi \colon L(X | G) \to L$
be a $G$-homomorphism induced by some substitution $\lbrace x_1, x_2, \ldots, x_n \rbrace \to L$.
We say that $\varphi$ is \textit{proper} for  $f_{m,q}$ if
 $\varphi(z_1) \in N \cup B \cup S$,
 $\varphi(z_i) \in N$ for $2\leqslant i \leqslant m$,
  and
  $\varphi(y_{ik})\in B \cup S$ for $1\leqslant i \leqslant m$,
   $1 \leqslant k \leqslant j_i$.

\begin{lemma}\label{LemmaReduct}
Let $\varphi$ be a \textit{proper} homomorphism for $f_{m,q}$.
Then $\varphi(f_{m,q})$ can be rewritten as a
sum of $\psi(f_{m+1,q'})$ where $\psi$
is a proper homomorphism for $f_{m+1,q'}$, $q' \geqslant q - (\dim L)m - 2$.
  ($Y'$, $Y'_i$, $z'_1, \ldots, z'_{m+1}$ may be different
for different terms.)
\end{lemma}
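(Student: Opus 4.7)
The strategy is to produce a new $z_{m+1}$-slot, occupied by an element of $N$, by manipulating the inner commutators $[z_i^{g_i}, y_{i1}^{g_{i1}}, \ldots, y_{ij_i}^{g_{ij_i}}]$ via the Jacobi identity, exploiting the decomposition $L=B\oplus S\oplus N$ from Lemma~\ref{LemmaRS} together with $[B,S]=0$, $[L,R]\subseteq N$, and (consequently) $[S,S]\subseteq N$. The philosophy is: every new element of $N$ that can be manufactured from the existing evaluation will serve as a legitimate ``nilpotent-type'' variable, so the combinatorial work is to extract one such element while sacrificing as few alternating sets as possible.

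First I would decompose each $\varphi(y_{ik})$ along $B\oplus S$ and, using the multilinearity of $f_{m,q}$, split $\varphi(f_{m,q})$ into a sum over all possible $B$/$S$-patterns of the $y$'s; since $[B,S]=0$ and $B\cap R=0$, a great many of these summands collapse. Then, working inside a single inner bracket, I would employ the identity
\[
[z,y_k,y_{k+1}]-[z,y_{k+1},y_k]=[z,[y_k,y_{k+1}]],
\]
combined with alternation, to convert antisymmetric pairs of $y$'s into a bracket $[y_k,y_{k+1}]$. When both $y_k,y_{k+1}$ are substituted into $S$, this bracket already lies in $N$ and is a candidate for $z_{m+1}$; the $B$-case is handled dimensionally: an alternating expression in $d+1$ variables whose images lie in a subspace of $L/\bigcap_k\Ann(I_k/J_k)$ of dimension $\leq d$ must vanish, so after at most $\dim L$ such manipulations per inner bracket either the term is zero or the missing element has been forced into $N$. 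The outcome is that a fragment of one inner bracket is isolated as a fresh nilpotent variable $z_{m+1}$, and the remaining structure is relabeled as $\psi(f_{m+1,q'})$ for a proper $\psi$.

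The loss $(\dim L)m+2$ is accounted for as follows. For each of the $m$ existing inner brackets, reaching a nilpotent fragment may cost up to $\dim L$ of the alternating sets, since at worst a full basis of $L$ must be exhausted before the dimension argument above forces an element of $N$ to appear; summed over the $m$ inner brackets this gives $(\dim L)m$. The additional $+2$ absorbs two further alternating sets: one for the slot providing $z_{m+1}$ itself, and one for the case $\varphi(z_1)\in B\cup S$, where $z_1$ initially behaves as a $y$-type variable and must be relabeled after an extra Jacobi rearrangement.

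The hard part, and the main obstacle, is the bookkeeping: one must check that after each Jacobi expansion and antisymmetrization, the surviving polynomial is literally of the form $\psi(f_{m+1,q'})$, i.e.\ each of the $q'$ remaining alternating sets still has exactly $d+1$ disjoint variables evaluated into $B\cup S$, the new $z_{m+1}$ is disjoint from all other variables, the $G$-exponents $g_i,g_{ij}$ are carried through the Jacobi manipulations correctly, and the $(\dim L)m+2$ count of lost alternating sets is achieved tightly rather than only asymptotically. These checks are routine but delicate, and they constitute the bulk of the argument.
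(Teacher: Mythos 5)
Your proposal captures some of the right raw materials --- the $B\oplus S\oplus N$ decomposition, the observation that $[S,S]\subseteq N$, the use of the alternation to force a variable into a small subspace, and the Jacobi identity to produce a new $N$-valued slot --- but it is missing the organizing structure that makes the argument actually close, and some of your explanations are not consistent with the bound you are trying to prove.

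The decisive device in the paper is an induction on the \emph{total height} $\sum_{i=1}^m \height\varphi(z_i)$ along a fixed composition chain of $G$-invariant ideals $L=L_0\supsetneqq L_1\supsetneqq\dots\supsetneqq L_\theta=0$. Setting $I_i:=L_{\height\varphi(z_i)}$ and $J_i$ one step below, the proof splits into two cases depending on whether the pairs $(I_i,J_i)$ satisfy Conditions 1--2 from the definition of $d(L)$. If they do \emph{not}, one chooses $G$-invariant $B$-complements $T_i$ that make the nested bracket vanish identically, rewrites each $\varphi(z_i)$ along $T_i\oplus J_i$, and observes that every non-vanishing term has strictly larger total height --- this is what guarantees termination. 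If they \emph{do} satisfy Conditions 1--2, then $\dim\bigl(\bigcap_k\Ann(I_k/J_k)\bigr)\geqslant\dim L - d$, Lemma~\ref{LemmaIrrAnnBS} splits this intersection along $B$, $S$ and $N$, and because each alternating set $Y_i$ has $d+1$ members evaluated to distinct basis vectors of $B\cup S$, some $y_{jk}\in Y_i$ must land in $B\cap\Ann$ or $S\cap\Ann$. The $B$-case kills the whole term outright (one uses $[B,S]=0$ and that $B\cap\Ann$ is a $G$-invariant ideal of $B$); only the $S$-case requires the Jacobi manipulation, and there the element is moved \emph{next to $z_k$}: the leading term $[z_k,y_{k\ell},\dots]$ has strictly larger height and is handled by induction, while the error terms $[z_k,\dots,[y_{k\beta},y_{k\ell}],\dots]$ either vanish ($y_{k\beta}\in B$) or produce the new $z'_{m+1}\in N$ ($y_{k\beta}\in S$). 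Your proposal contains none of this: you neither say why the relevant subspace has dimension $\leqslant d$ when Conditions 1--2 are not known to hold, nor why the process terminates. Those are not bookkeeping points; they are the entire engine of the proof.

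Your loss accounting is also off. The paper loses at most one alternating set per inductive step (expanding $\Alt_q$), and the number of inductive steps is bounded by $(\dim L)m$ because $\sum_i\height\varphi(z_i)\leqslant \theta\,m\leqslant(\dim L)m$ and this sum strictly increases. The $+2$ absorbs the final expansion of $\Alt_q$ together with $\Alt_j$ in the step that actually produces $z'_{m+1}$. Your ``$\dim L$ per inner bracket'' explanation and your reading of the $+2$ (relabeling $z_1$) do not correspond to this mechanism. Finally, your anti-symmetrization of \emph{adjacent pairs} via $[z,y_k,y_{k+1}]-[z,y_{k+1},y_k]=[z,[y_k,y_{k+1}]]$ is a valid identity but is undirected: the paper's manipulation targets a \emph{specific} $y_{k\ell}$ already known to lie in $S\cap\Ann$, which is what makes the leading term induct and the error terms controllable.
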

\begin{proof}
Let $\alpha_i := \height \varphi(z_i)$.
We will use induction on $\sum_{i=1}^m \alpha_i$.
(The sum will grow.)
 Note that $ \alpha_i \leqslant \theta \leqslant \dim L$.
Denote $I_i := L_{\alpha_i}$, $J_i := L_{\alpha_{i+1}}$.

First, consider the case when $I_1, \ldots, I_m$,
$J_1, \ldots, J_m$ do not satisfy Conditions 1--2.
In this case we can choose $G$-invariant $B$-submodules
$T_i$, $I_i = T_i \oplus J_i$, such that
\begin{equation}\label{EqTqUpperZero}
 [[T_1, \underbrace{L, \ldots, L}_{q_1}], [T_2, \underbrace{L, \ldots, L}_{q_2}], \ldots, [T_m,
 \underbrace{L, \ldots, L}_{q_m}]] = 0
 \end{equation}
 for all $q_i \geqslant 0$.
Rewrite $\varphi(z_i)=a'_i+a''_i$, $a'_i \in T_i$, $a''_i \in J_i$.
Note that $\height a''_i > \height \varphi(z_i)$.
Since $f_{m,q}$ is multilinear, we can rewrite
$\varphi(f_{m,q})$ as a sum of similar
terms $\tilde\varphi(f_{m,q})$ where $\tilde\varphi(z_i)$ equals either
$a'_i$ or $a''_i$. By~(\ref{EqTqUpperZero}), the term where all
$\tilde\varphi(z_i)=a'_i \in T_i$, equals $0$.
For the other terms $\tilde\varphi(f_{m,q})$ we
have $\sum_{i=1}^m \height \tilde\varphi(z_i) > \sum_{i=1}^m \height \varphi(z_i)$.

Thus without lost of generality
we may assume that $I_1, \ldots, I_m$,
$J_1, \ldots, J_m$ satisfy Conditions 1--2.
In this case, $\dim(\Ann(I_1/J_1) \cap \ldots \cap \Ann(I_m/J_m))
\geqslant \dim(L)-d$.
In virtue of Lemma~\ref{LemmaIrrAnnBS},
$$\Ann(I_1/J_1) \cap \ldots \cap \Ann(I_m/J_m)
= B \cap \Ann(I_1/J_1) \cap \ldots \cap \Ann(I_m/J_m) \oplus$$ $$
S \cap \Ann(I_1/J_1) \cap \ldots \cap \Ann(I_m/J_m)\ \oplus\ N.$$
Choose a basis in $B$ that includes a basis of
$B \cap \Ann(I_1/J_1) \cap \ldots \cap \Ann(I_m/J_m)$
and a basis in $S$
that includes the basis of $S \cap \Ann(I_1/J_1) \cap \ldots \cap \Ann(I_m/J_m)$.
Since $f_{m,q}$ is multilinear, we may assume
that only basis elements are substituted for $y_{k\ell}$. Note that $f_{m,q}$
is alternating in $Y_i$. Hence, if $\varphi(f_{m,q})\ne 0$, then for every $1 \leqslant i \leqslant q$
there exists $y_{jk} \in Y_i$ such that
either $$\varphi(y_{jk}) \in B \cap \Ann(I_1/J_1) \cap \ldots \cap \Ann(I_m/J_m)$$
or $$\varphi(y_{jk}) \in S \cap \Ann(I_1/J_1) \cap \ldots \cap \Ann(I_m/J_m).$$

Consider the case when $\varphi(y_{kj}) \in B \cap \Ann(I_1/J_1) \cap \ldots \cap \Ann(I_m/J_m)$
for some $y_{kj}$.
By the corollary from Lemma~\ref{LemmaComplIrrGInv}, we can choose $G$-invariant $B$-submodules $T_k$ such that $I_k = T_k \oplus J_k$. We may assume
that $\varphi(z_k) \in T_k$ since elements of $J_k$ have greater heights.
Therefore $[\varphi(z_k^{g_k}), a] \in T_k \cap J_k$
 for all $a \in B \cap \Ann(I_1/J_1) \cap \ldots \cap \Ann(I_m/J_m)$.
Hence $[\varphi(z_k^{g_k}),a]=0$. Moreover, $B \cap \Ann(I_1/J_1) \cap \ldots \cap \Ann(I_m/J_m)$ is a $G$-invariant ideal of $B$ and $[B,S]=0$. Thus, applying Jacobi's identity
several times, we obtain
$$\varphi([z_k^{g_{k}},y_{k1}^{g_{k1}},\ldots, y_{kj_k}^{g_{kj_k}}]) = 0.$$
Expanding the alternations, we get $\varphi(f_{m,q})=0$.

Consider the case when $\varphi(y_{k\ell}) \in S \cap \Ann(I_1/J_1) \cap \ldots \cap \Ann(I_m/J_m)$
for some $y_{k\ell} \in Y_q$. Expand the alternation $\Alt_q$ in $f_{m,q}$
and rewrite $f_{m,q}$ as a sum of
$$\tilde f_{m,q-1} :=\Alt_{1} \ldots \Alt_{q-1} [[z_1^{g_1}, y_{11}^{g_{11}}, y_{12}^{g_{12}},
  \ldots, y_{1j_1}^{g_{1j_1}}],
 [z_2^{g_2},y_{21}^{g_{21}},y_{22}^{g_{22}},\ldots, y_{2j_2}^{g_{2j_2}}], \ldots,
 $$ $$[ z_m^{g_m}, y_{m1}^{g_{m1}}, y_{m2}^{g_{m2}}, \ldots, y_{mj_m}^{g_{mj_m}}]].$$
 The operator $\Alt_q$ may change indices, however we
 keep the notation $y_{k\ell}$ for the variable with the property
 $\varphi(y_{k\ell}) \in S \cap \Ann(I_1/J_1) \cap \ldots \cap \Ann(I_m/J_m)$.
Now the alternation does not affect $y_{k\ell}$.
Note that $$[z_k^{g_{k}},y_{k1}^{g_{k1}},\ldots, y_{k\ell}^{g_{k\ell}}, \ldots, y_{kj_k}^{g_{kj_k}}] = [z_k^{g_{k}},y_{k\ell}^{g_{k\ell}},y_{k1}^{g_{k1}},\ldots, y_{kj_k}^{g_{kj_k}}] +$$ $$
\sum\limits_{\beta=1}^{\ell-1}
[z_k^{g_{k}},y_{k1}^{g_{k1}},\ldots, y_{k,\beta-1}^{g_{k,\beta-1}}, [y_{k\beta}^{g_{k\beta}}, y_{k\ell}^{g_{k\ell}}],
 y_{k,\beta+1}^{g_{k,\beta+1}},\ldots, y_{k,{\ell-1}}^{g_{k,{\ell-1}}}, y_{k,{\ell+1}}^{g_{k,{\ell+1}}}, \ldots, y_{kj_k}^{g_{kj_k}}].$$

In the first term we replace $[z_k^{g_{k}},y_{k\ell}^{g_{k\ell}}]$
with $z'_k$ and define $\varphi'(z'_k)
:= \varphi([z_k^{g_{k}},y_{k\ell}^{g_{k\ell}}])$, $\varphi'(x) := \varphi(x)$ for other
variables~$x$. Then $\height\varphi'(z'_k) >
\height\varphi(z_k)$ and we can use the inductive assumption.
If $y_{k\beta}\in Y_j$ for some $j$, then we expand the alternation $\Alt_j$
in this term in $\tilde f_{m,q-1}$.
If $\varphi(y_{k\beta}) \in B$, then the term is zero.
If $\varphi(y_{k\beta}) \in S$, then $\varphi([y_{k\beta}^{g_{k\beta}}, y_{k\ell}^{g_{k\ell}}]) \in N$.
We replace $[y_{k\beta}^{g_{k\beta}}, y_{k\ell}^{g_{k\ell}}]$
 with an additional variable $z'_{m+1}$
and define $\psi(z'_{m+1}):=\varphi([y_{k\beta}^{g_{k\beta}},
 y_{k\ell}^{g_{k\ell}}])$, $\psi(x):=\varphi(x)$
 for other variables $x$.
Applying Jacobi's identity several times,
we obtain the polynomial of the desired form. In each inductive step
we reduce $q$ no more than by $1$ and the maximal number of
inductive steps equals $(\dim L)m$. This finishes the proof.
\end{proof}

Since $N$ is a nilpotent ideal,
 $N^{p} = 0$ for some $p\in \mathbb N$.

\begin{lemma}\label{LemmaUpper}
If $\lambda = (\lambda_1, \ldots, \lambda_s) \vdash n$
and $\lambda_{d+1} \geqslant p((\dim L)p+3)$ or $\lambda_{\dim L+1} > 0$, then
$m(L, G, \lambda) = 0$.
\end{lemma}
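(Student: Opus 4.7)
The plan is to use the Young symmetrizer dichotomy followed by an iterated application of Lemma~\ref{LemmaReduct}. Suppose $m(L,G,\lambda) \neq 0$. By standard $S_n$-module theory there exist a Young tableau $T_\lambda$ of shape $\lambda$ and a multilinear polynomial $f = b_{T_\lambda}\tilde{f} \in V^G_n$ with $f \notin \Id^G(L)$; in particular $f$ is alternating on the variables filling each column of $T_\lambda$. If $\lambda_{\dim L + 1} > 0$, then $T_\lambda$ has a column of height at least $\dim L + 1$, so $f$ alternates on more than $\dim L$ variables; any multilinear alternating function on more arguments than the dimension of the target algebra vanishes identically, contradicting $f \notin \Id^G(L)$. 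This disposes of the first alternative.

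Assume now $\lambda_{d+1} \geqslant p((\dim L)p+3)$ and set $q := \lambda_{d+1}$. Let $Y_1,\dots,Y_q$ be the sets of variables occupying the top $d+1$ boxes of the first $q$ columns of $T_\lambda$, and choose a variable $z_1$ not lying in $\bigcup_i Y_i$ (at worst sacrificing one $Y_i$ and working with $q-1$ alternating sets, which the quantitative bound still accommodates). Using the Jacobi identity, rewrite $f$ as a linear combination of left-normed commutators headed by $z_1^{g_1}$; each summand has exactly the shape of a polynomial $f_{1,q}$ defined in the paper, still alternating on the $Y_i$'s. Since $f \notin \Id^G(L)$, some such $f_{1,q}$ admits a substitution $\varphi$ with $\varphi(f_{1,q}) \neq 0$. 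Splitting $\varphi$ through $L = B\oplus S\oplus N$ by multilinearity, and using $[L,N]\subseteq N$ together with the Jacobi identity to absorb any $y$-variable forced into $N$ as a new $z$-variable, we may arrange for $\varphi$ to be \emph{proper} in the sense required by Lemma~\ref{LemmaReduct}, at the cost of a bounded further loss in $q$.

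Now iterate Lemma~\ref{LemmaReduct} exactly $p$ times. Each step converts $f_{m,q}$ into a sum of $f_{m+1,q'}$ with $q' \geqslant q - (\dim L)m - 2$ and preserves properness. After $p$ applications we reach $f_{p+1,q_p}$ with a proper substitution $\psi$ and
\[
q_p \;\geqslant\; q - \sum_{m=1}^p \bigl((\dim L)m + 2\bigr) \;=\; q - (\dim L)\frac{p(p+1)}{2} - 2p \;\geqslant\; 0,
\]
the last inequality being precisely the arithmetic encoded in the hypothesis $\lambda_{d+1} \geqslant p((\dim L)p + 3)$. However, $\psi(f_{p+1,q_p})$ is a sum of commutators whose outer block has the form $[A_1, A_2, \dots, A_{p+1}]$ with $A_1 \in L$ and $A_2,\dots,A_{p+1} \in N$; iterated bracketing against elements of the nilpotent ideal $N$ lands such a commutator in $N^p = 0$, so $\psi(f_{p+1,q_p}) = 0$, contradicting $\varphi(f_{1,q}) \neq 0$.

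The main obstacle will be the properness step — turning an arbitrary non-identity substitution on $f_{1,q}$ into one mapping every $y$-variable into $B\cup S$. Handling the $y$'s forced into $N$ requires careful Jacobi manipulations to reroute them into the $z$-positions without collapsing the alternating structure on the $Y_i$'s, and is essentially a miniature version of the induction carried out inside the proof of Lemma~\ref{LemmaReduct} itself.
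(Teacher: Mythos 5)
Your proposal is correct and follows essentially the same path as the paper's proof: the dichotomy between a tall column (height $>\dim L$) forcing vanishing by alternation, and many long rows giving $\lambda_{d+1}$ large enough to run an iterated application of Lemma~\ref{LemmaReduct} on a proper $f_{m,q}$ until $m=p+1$, where the nested commutator of blocks, all but the first lying in $N$, lands in $N^p=0$. The paper is organized slightly differently in two cosmetic respects. First, rather than always starting from $f_{1,q}$ and absorbing $N$-variables afterward, it immediately rewrites $\varphi(e^*_{T_\lambda}f)$ as a sum of $\varphi(f_{m,q})$ with $1\leqslant m\leqslant p$ (the $z_i$ for $i\geqslant 2$ being exactly the variables $\varphi$ sends to $N$), discarding $m>p$ terms because they already vanish; your ``iterate exactly $p$ times'' therefore overstates what happens when the starting $m$ exceeds $1$, though the bound $\sum_{m=1}^p\bigl((\dim L)m+2\bigr)$ is a conservative majorant for $\sum_{m=m_0}^p$ and the slack in $p((\dim L)p+3)$ covers the initial loss of at most $p$ alternating sets (one for $z_1$, at most $m_0-1$ spoiled by $N$-variables). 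Second, the paper expands symmetrization on all variables and alternation only on the $N$-variables before grouping with Jacobi, which is the precise mechanism behind your phrase ``at the cost of a bounded further loss in $q$''; making that loss explicit as at most $p$ is the only detail your write-up leaves implicit, and the arithmetic, which you correctly identify as the point of the threshold $p((\dim L)p+3)$, accommodates it.
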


\begin{proof}
It is sufficient to prove that $e^{*}_{T_\lambda} f \in \Id^G(L)$
for every $f\in V^G_n$ and a Young tableau $T_\lambda$, $\lambda \vdash n$, with
$\lambda_{d+1} \geqslant p((\dim L)p+3)$ or $\lambda_{\dim L+1} > 0$.

Fix some basis of $L$ that is a union of
bases of $B$, $S$, and $N$.
Since polynomials are multilinear, it is
sufficient to substitute only basis elements.
 Note that
$e^{*}_{T_\lambda} = b_{T_\lambda} a_{T_\lambda}$
and $b_{T_\lambda}$ alternates the variables of each column
of $T_\lambda$. Hence if we make a substitution and $
e^{*}_{T_\lambda} f$ does not vanish, then this implies that different basis elements
are substituted for the variables of each column.
But if $\lambda_{\dim L+1} > 0$, then the length of the first column is greater
than $\dim L$. Therefore, $e^{*}_{T_\lambda} f \in \Id^G(L)$.

Consider the case $\lambda_{d+1} \geqslant p((\dim L)p+3)$.
 Let $\varphi$ be a substitution of basis elements for the variables
 $x_1, \ldots, x_n$.
Then $e^{*}_{T_\lambda}f$ can be rewritten as a sum of polynomials $f_{m,q}$
where $1 \leqslant m \leqslant p$, $q \geqslant p((\dim L)p+2)$, and
$z_i$, $2\leqslant i \leqslant m$, are replaced with
 elements of $N$.  (For different terms $f_{m,q}$,
 numbers $m$ and $q$,
 variables $z_i$, $y_{ij}$, and sets $Y_i$ can be different.)
 Indeed, we expand symmetrization on all variables and alternation on
 the variables replaced with elements from  $N$.
    If we have no variables replaced
 with elements from $N$, then we take $m=1$,
 rewrite the polynomial $f$ as a sum of long commutators,
 in each long commutator expand the alternation on the set that
 includes one of the variables in the inner commutator, and denote
 that variable by $z_1$.
 Suppose we have variables replaced
 with elements from $N$. We denote them by $z_k$.
Then, using Jacobi's identity, we can put one of such variables
inside a long commutator and group all the variables,
replaced with elements from  $B \cup S$, around $z_k$
such that each $z_k$ is inside the corresponding long commutator.

 Applying Lemma~\ref{LemmaReduct} many times, we increase $m$.
 The ideal $N$ is nilpotent and $\varphi(f_{p+1,q})=0$
 for every $q$ and a proper homomorphism~$\varphi$.
  Reducing $q$ no more than by $p((\dim L)p+2)$,
 we obtain $\varphi(e^{*}_{T_\lambda}f)=0$.
\end{proof}
Now we can prove
\begin{theorem}\label{TheoremUpper} If $d > 0$, then
there exist constants $C_2 > 0$, $r_2 \in \mathbb R$
such that $c^G_n(L) \leqslant C_2 n^{r_2} d^n$
for all $n \in \mathbb N$. In the case $d=0$, the algebra $L$ is nilpotent.
\end{theorem}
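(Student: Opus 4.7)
The plan is to expand the codimension as a weighted sum over partitions,
$$c^G_n(L) = \sum_{\lambda \vdash n} m(L,G,\lambda)\,\dim M(\lambda),$$
and bound the two factors separately: the multiplicities via Theorem~\ref{TheoremMult}, and the dimensions via the hook length formula once the shapes $\lambda$ that can contribute have been pinned down by Lemma~\ref{LemmaUpper}.

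The case $d=0$ is handled directly from the opening Remark of Section~\ref{SectionUpper}: when $d(L)=0$ one has $L=\Ann(L_{i-1}/L_i)$ for every $1\leqslant i\leqslant \theta$, so $[a_1,\ldots,a_{\theta+1}]=0$ for all $a_i\in L$, and $L$ is nilpotent. Suppose now $d>0$. Set $k := p((\dim L)p+3)$ and $h:=\dim L$. Lemma~\ref{LemmaUpper} forces $m(L,G,\lambda)=0$ unless simultaneously $\lambda_{h+1}=0$ and $\lambda_{d+1} < k$; in other words, unless $D_\lambda$ is contained in the fixed hook-shaped region formed by a $d\times\infty$ horizontal strip together with an $(h-d)\times k$ rectangle attached below. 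For any partition $\lambda\vdash n$ whose Young diagram lies in such a hook, a Stirling-type analysis of the hook length formula yields constants $C',r'$ depending only on $d,h,k$ such that
$$\dim M(\lambda) \leqslant C'\, n^{r'}\, d^n.$$
Combining this with Theorem~\ref{TheoremMult} gives
$$c^G_n(L) \leqslant \left(\sum_{\lambda \vdash n} m(L,G,\lambda)\right) \cdot C'\, n^{r'}\, d^n \leqslant C_2\, n^{r_2}\, d^n,$$
which is the required bound.

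The technical heart of the argument is the dimension estimate $\dim M(\lambda)\leqslant C' n^{r'} d^n$ for $\lambda$ in the hook $H(d,h-d,k)$. The intuition is clear: the $d$ unrestricted long rows generate the exponential factor $d^n$, since a rectangular diagram of width $d$ has $\dim M(\lambda)\sim d^n$ up to polynomial corrections by Stirling's formula, while the remaining $h-d$ rows have uniformly bounded length $k$ and so can only absorb polynomially many extra cells, contributing only polynomial fluctuations. Translating this picture into a rigorous bound via the hook length formula is the only nontrivial step, but it is a standard computation in codimension theory and depends only on $d,h,k$, not on $n$.
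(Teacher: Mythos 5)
Your proposal is correct and is essentially the proof given in the paper: both start from the Young decomposition $c^G_n(L)=\sum_\lambda m(L,G,\lambda)\dim M(\lambda)$, use Lemma~\ref{LemmaUpper} to restrict the shapes to the bounded ``hook'' region (at most $\dim L$ rows, rows beyond the $d$th of length $<p((\dim L)p+3)$), bound the dimension contribution of such shapes by $Cn^r d^n$, and multiply by the polynomial bound on $\sum_\lambda m(L,G,\lambda)$ from Theorem~\ref{TheoremMult}. The only cosmetic difference is that you bound $\max_\lambda\dim M(\lambda)\leqslant C'n^{r'}d^n$ and multiply by the colength, whereas the paper bounds $\sum_{m\ne 0}\dim M(\lambda)\leqslant C_8 n^{r_8}d^n$ (citing \cite[Lemmas~6.2.4, 6.2.5]{ZaiGia} for the Stirling-type estimate you sketched) and multiplies by the maximal multiplicity; the two rearrangements are interchangeable, and your treatment of the $d=0$ case likewise matches the paper's remark.
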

\begin{proof}
Lemma~\ref{LemmaUpper} and~\cite[Lemmas~6.2.4, 6.2.5]{ZaiGia}
imply
$$
\sum_{m(L,G, \lambda)\ne 0} \dim M(\lambda) \leqslant C_8 n^{r_8} d^n
$$
for some constants $C_8, r_8 > 0$.
Together with Theorem~\ref{TheoremMult} this implies the upper bound.
\end{proof}

\section{Alternating polynomials}
\label{SectionAlt}

In this section we prove auxiliary propositions needed to obtain the lower bound.

\begin{lemma}\label{LemmaTwoColumns}
Let $\alpha_1, \alpha_2, \ldots, \alpha_q$, $\beta_1, \ldots, \beta_q \in F$, $
1 \leqslant k \leqslant q$,
 $\alpha_i\ne 0$ for $1 \leqslant i < k$,
$\alpha_k=0$, and $\beta_k\ne 0$. Then there exists such
$\gamma \in F$ that $\alpha_i + \gamma \beta_i \ne 0$
for all $1 \leqslant i \leqslant k$.
\end{lemma}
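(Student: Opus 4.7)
The plan is to observe that for each index $i$, the condition $\alpha_i + \gamma \beta_i \ne 0$ forbids at most one value of $\gamma$, so altogether we only need to avoid a finite set of field elements. Since $F$ has characteristic $0$, it is infinite, and such a $\gamma$ can always be chosen.

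More concretely, I would analyze the forbidden values index by index. For $i = k$, the expression $\alpha_k + \gamma\beta_k = \gamma\beta_k$ vanishes precisely when $\gamma = 0$ (using $\alpha_k = 0$ and $\beta_k \ne 0$). For $1 \leqslant i < k$, split into two cases: if $\beta_i = 0$ then $\alpha_i + \gamma\beta_i = \alpha_i \ne 0$ automatically, contributing no forbidden value; if $\beta_i \ne 0$ then the unique forbidden value is $\gamma = -\alpha_i/\beta_i$. Thus the set
\[
B := \{0\} \cup \{-\alpha_i/\beta_i \mid 1 \leqslant i < k,\ \beta_i \ne 0\}
\]
of forbidden $\gamma$ has cardinality at most $k$. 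Any $\gamma \in F \setminus B$ satisfies the required inequalities, and such a $\gamma$ exists because $F$ is infinite.

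There is essentially no obstacle here: the statement is a standard avoidance argument and the only hypothesis used beyond the enumerated assumptions is that $F$ is infinite, which is guaranteed by the blanket assumption of characteristic $0$ in the paper. The only small subtlety is not to forget the case $\beta_i = 0$ for $i < k$, which is handled by the hypothesis $\alpha_i \ne 0$.
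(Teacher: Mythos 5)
Your proof is correct and is essentially the same avoidance argument as in the paper: exclude the finite set $\{0\}\cup\{-\alpha_i/\beta_i\}$ and use that $F$ is infinite. You are in fact slightly more careful than the paper, which writes the fractions $-\alpha_i/\beta_i$ without singling out the case $\beta_i=0$; your observation that such indices impose no constraint is the right way to read that set.
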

\begin{proof} It is sufficient to choose
$\gamma \notin \left\lbrace -\frac{\alpha_1}{\beta_1},
\ldots, -\frac{\alpha_{k-1}}{\beta_{k-1}}, 0\right\rbrace$. It is possible to do
since $F$ is infinite.
\end{proof}

Let $F \langle X | G \rangle$ be the
free associative algebra over $F$ with free formal generators $x^g_j$, $j\in\mathbb N$,
 $g \in G$. Define $(x^g_j)^h = x^{hg}_j$
 for $h \in G$.
 Then $F \langle X | G \rangle$ becomes the \textit{free associative $G$-algebra} with
 free generators $x_j = x_j^1$, $j \in \mathbb N$, $1 \in G$.
Denote by $P^G_n$, $n\in \mathbb N$, the subspace of associative multilinear $G$-polynomials
in variables $x_1, \ldots, x_n$.
In other words, $$P^G_n = \left\lbrace\sum_{\sigma \in S_n,\,
 g_1, \ldots, g_n \in G} \alpha_{\sigma, g_1, \ldots, g_n}\, x_{\sigma(1)}^{g_1} x_{\sigma(2)}^{g_2}
\ldots x_{\sigma(n)}^{g_n} \, \biggl| \,
\alpha_{\sigma, g_1, \ldots, g_n} \in F \right\rbrace.$$

\begin{lemma}\label{LemmaS}
Let $L_0=B_0 \oplus R_0$ be a reductive Lie algebra
with $G$-action, $B_0$ be a maximal semisimple $G$-subalgebra, and
$R_0$ be the center of $L_0$ with a basis $r_1$, $r_2$, \ldots, $r_t$.
Let $M$ be a faithful finite dimensional irreducible $L_0$-module with $G$-action.
Denote the corresponding representation $L_0 \to \mathfrak{gl}(M)$ by $\varphi$.
Then there exists such alternating in $x_1, x_2, \ldots, x_t$
 polynomial $f \in P^G_t$  that
$f(\varphi(r_1), \ldots, \varphi(r_t))$ is a nondegenerate operator on~$M$.
\end{lemma}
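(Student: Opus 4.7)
The plan is to reduce the problem to evaluating alternating polynomials on block-scalar operators and then interpolate via Lemma~\ref{LemmaTwoColumns}. First, apply Lemma~\ref{LemmaRedIrr} to decompose $M = M_1 \oplus \ldots \oplus M_q$ into $L_0$-submodules on each of which $R_0$ acts by a scalar character $\alpha_j \colon R_0 \to F$, so that $\varphi(r)|_{M_j} = \alpha_j(r)\cdot \id$, while $G$ permutes the $M_j$ transitively. Writing this action on the index set as $M_j^g = M_{g\cdot j}$, the formula $\psi^g v = (\psi v^{g^{-1}})^g$ yields
$$\varphi(r)^g\bigr|_{M_j} = \alpha_{g^{-1}\cdot j}(r)\cdot \id.$$

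For each tuple $(g_1, \ldots, g_t) \in G^t$, introduce the alternating associative polynomial
$$f_{g_1, \ldots, g_t} := \sum_{\sigma \in S_t} \sign(\sigma)\, x_{\sigma(1)}^{g_1} x_{\sigma(2)}^{g_2} \cdots x_{\sigma(t)}^{g_t} \in P^G_t.$$
Since the operators $\varphi(r_i)^{g_i}$ are diagonal with respect to $M = \bigoplus M_j$ and commute on each $M_j$, evaluation at $x_i = \varphi(r_i)$ acts on $M_j$ as the scalar
$$D_{g_1, \ldots, g_t}(j) = \det\!\bigl(\alpha_{g_i^{-1}\cdot j}(r_\ell)\bigr)_{i,\ell=1}^t.$$
Any $F$-linear combination of the $f_{g_1, \ldots, g_t}$ is again alternating in $x_1, \ldots, x_t$ and its value on $M_j$ is the corresponding linear combination of the $D$'s, so it suffices to find one such combination whose scalar on each $M_j$ is nonzero.

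Faithfulness of the $L_0$-module $M$ makes $\varphi|_{R_0}$ injective, so the characters $\alpha_1, \ldots, \alpha_q$ have trivial common kernel in $R_0$ and thus span $R_0^*$. Pick indices $j_1, \ldots, j_t$ so that $\alpha_{j_1}, \ldots, \alpha_{j_t}$ is a basis of $R_0^*$, which ensures $\det(\alpha_{j_i}(r_\ell)) \ne 0$. Given any target $j$, by the transitivity part of Lemma~\ref{LemmaRedIrr} we may choose $g_i \in G$ with $g_i\cdot j_i = j$, equivalently $g_i^{-1}\cdot j = j_i$; the resulting tuple then satisfies $D_{g_1, \ldots, g_t}(j) = \det(\alpha_{j_i}(r_\ell)) \ne 0$.

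Finally, assemble a single polynomial by induction on $k$. Start with $f^{(1)} = f_{g_1(1), \ldots, g_t(1)}$, nonzero on $M_1$. At stage $k$, given $f^{(k)}$ whose scalar $D^{(k)}(i)$ is nonzero for $i \le k$, either it is already nonzero on $M_{k+1}$ or, choosing a fresh tuple with $D_{g_1(k+1), \ldots, g_t(k+1)}(k+1) \ne 0$ and applying Lemma~\ref{LemmaTwoColumns} with $\alpha_i = D^{(k)}(i)$ and $\beta_i$ equal to the scalar of the new polynomial on $M_i$, we obtain $\gamma \in F$ such that $f^{(k+1)} := f^{(k)} + \gamma f_{g_1(k+1), \ldots, g_t(k+1)}$ has nonzero scalar on $M_1, \ldots, M_{k+1}$. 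After $q$ stages, $f := f^{(q)}$ is the desired alternating $G$-polynomial. The main subtlety is the bookkeeping that turns the $G$-action on operators into the permutation $j \mapsto g^{-1}\cdot j$ of the characters $\alpha_j$; once the formula for $\varphi(r)^g|_{M_j}$ is recorded, the rest is transitivity together with a clean application of Lemma~\ref{LemmaTwoColumns}.
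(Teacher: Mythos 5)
Your proposal is correct, and it follows the same overall strategy as the paper: decompose $M = M_1 \oplus \ldots \oplus M_q$ via Lemma~\ref{LemmaRedIrr} so that $R_0$ acts by characters $\alpha_j$ and $G$ permutes the $M_j$ transitively, reduce to producing for each $j$ an alternating $f_j\in P^G_t$ acting by a nonzero scalar on $M_j$, and then combine the $f_j$ using Lemma~\ref{LemmaTwoColumns}. Where you genuinely diverge is in how the per-block polynomials are built. The paper first introduces the projections $p_1, \ldots, p_q$ onto the blocks, forms the full degree-$q$ alternating polynomial $\tilde f_j := \sum_{\sigma\in S_q}(\sign\sigma)\,x_{\sigma(1)}^{g_1}\cdots x_{\sigma(q)}^{g_q}$ with $g_i$ chosen so that $p_i^{g_i}=p_j$, checks that $\tilde f_j(p_1,\ldots,p_q)$ acts as the identity on $M_j$, replaces $t$ of the $p$'s by $\varphi(r_1),\ldots,\varphi(r_t)$ (possible because they together with suitable $p_{i_{t+1}},\ldots,p_{i_q}$ span $\langle p_1,\ldots,p_q\rangle_F$, faithfulness giving linear independence), and finally uses commutativity of projections to peel off the extra $p$'s and extract the degree-$t$ coefficient $\hat f_j$. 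You skip the detour through degree $q$: you build the degree-$t$ alternating polynomial $f_{g_1,\ldots,g_t}=\sum_{\sigma\in S_t}(\sign\sigma)\,x_{\sigma(1)}^{g_1}\cdots x_{\sigma(t)}^{g_t}$ directly, compute its scalar on $M_j$ as the determinant $\det\bigl(\alpha_{g_i^{-1}\cdot j}(r_\ell)\bigr)$, observe that faithfulness forces the $\alpha_j$ to span $R_0^{*}$ so that some $t$ of them, say $\alpha_{j_1},\ldots,\alpha_{j_t}$, form a basis with $\det(\alpha_{j_i}(r_\ell))\ne 0$, and use transitivity to choose $g_i$ with $g_i^{-1}\cdot j = j_i$. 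The net effect is the same (in particular, both arguments tacitly use $q\geqslant t$, which follows from faithfulness in both), but your route avoids introducing the projections as auxiliary evaluation points and avoids the $t=q$ versus $t<q$ case split and the coefficient-extraction step, so it is somewhat more economical. The only presentational point worth tightening is the identity $\varphi(r)^g|_{M_j} = \alpha_{g^{-1}\cdot j}(r)\cdot\id$, which you correctly flag as the crux; you might note it also reads $\alpha_j(r^g)=\alpha_{g^{-1}\cdot j}(r)$ via $\varphi(r)^g=\varphi(r^g)$, which makes the bookkeeping transparent.
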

\begin{proof} By Lemma~\ref{LemmaRedIrr},
$M=M_1\oplus\ldots \oplus M_q$ where $M_j$ are $L_0$-submodules
and $r_i$ acts on each $M_j$ as a scalar operator.
Note that it is sufficient to prove that for each $j$ there exists
such alternating in $x_1, x_2, \ldots, x_t$
 polynomial $f_j \in P^G_t$  that
$f_j(\varphi(r_1), \ldots, \varphi(r_t))$ multiplies
each element of~$M_j$ by a nonzero scalar.
Indeed, in this case Lemma~\ref{LemmaTwoColumns} implies the existence
of such $f=\gamma_1 f_1 +\ldots + \gamma_q f_q$, $\gamma_i \in F$,
that $f(\varphi(r_1), \ldots, \varphi(r_t))$ acts
 on each $M_i$ as a nonzero scalar.

 Denote by $p_i \in \End_F(M)$ the projection
  on $M_i$ along $\bigoplus_{k\ne i} M_k$.
 Fix $1 \leqslant j \leqslant q$. By Lemma~\ref{LemmaRedIrr}, Proposition~\ref{RedGAction}, we can choose such $g_i \in G$
 that $M_i^{g_i}=M_j$, $1 \leqslant i \leqslant q$. Then $p_i^{g_i}=p_j$.
 Consider $\tilde f_j := \sum_{\sigma \in S_q} (\sign \sigma)
 x_{\sigma(1)}^{g_1} x_{\sigma(2)}^{g_2}
 \ldots x_{\sigma(q)}^{g_q}$.
 Note that either $ p_{\sigma(1)}^{g_1} p_{\sigma(2)}^{g_2}
 \ldots p_{\sigma(q)}^{g_q} = 0$
 or $ p_{\sigma(1)}^{g_1} p_{\sigma(2)}^{g_2}
 \ldots p_{\sigma(q)}^{g_q} = p_k$ for some $1 \leqslant k \leqslant s$.
 Now we prove that $ p_{\sigma(1)}^{g_1} p_{\sigma(2)}^{g_2}
 \ldots p_{\sigma(q)}^{g_q} = p_j$ if and only if
 $\sigma(i)=i$ for all $1 \leqslant i \leqslant q$.
 Indeed, $p_{\sigma(i)}^{g_i} = p_j$
 if and only if $M_{\sigma(i)}^{g_i}=M_j$. Hence
 $\sigma(i)=i$. This implies that $\tilde f_j
 (p_1, \ldots, p_q)$ acts as an identical map
 on $M_j$.

 We can choose $i_{t+1}, \ldots, i_q$
 such that $\varphi(r_1), \varphi(r_2), \ldots,
 \varphi(r_t)$, $p_{i_{t+1}},
 \ldots, p_{i_q}$ form a basis in
 $ \langle p_1, \ldots, p_q \rangle_F$.
 Then $\tilde f_j
 (\varphi(r_1), \varphi(r_2), \ldots,
 \varphi(r_t), p_{i_{t+1}},
 \ldots, p_{i_q})$
 acts as a nonzero scalar on $M_j$.
 If $t=q$, then we define $f_j = \tilde f_j$.
 Suppose $t < q$.
 Since the projections commute, we can rewrite $$
 \tilde f_j(\varphi(r_1), \varphi(r_2), \ldots,
 \varphi(r_t), p_{i_{t+1}},
 \ldots, p_{i_q}) = \sum_{i=1}^q \hat f_i(\varphi(r_1), \varphi(r_2), \ldots,
 \varphi(r_t))p_i$$
 where $\hat f_i \in P^G_t$ are
 alternating in $x_1, x_2, \ldots,
 x_t$. Hence $\hat f_j(\varphi(r_1), \varphi(r_2), \ldots,
 \varphi(r_t))$ acts on $M_j$ as a nonzero
 scalar operator.
 We define $f_j:=\hat f_j$.
\end{proof}

 Let $L_0$ be a Lie algebra with $G$-action, $M$ be $L_0$-module with $G$-action, $\varphi \colon  L_0 \to \mathfrak{gl}(M)$ be the corresponding representation. A polynomial $f(x_1, \ldots, x_n)\in F\langle X | G \rangle$ is a \textit{$G$-identity} of $\varphi$ if $f(\varphi(a_1), \ldots, \varphi(a_n))=0$ for all $a_i \in L_0$. The set $\Id^{G}(\varphi)$ of all $G$-identities of $\varphi$ is a two-sided ideal in $F\langle X | G \rangle$ invariant under $G$-action.

Lemma~\ref{LemmaAlternateFirst} is an analog of~\cite[Lemma~1]{GiaSheZai}.

\begin{lemma}\label{LemmaAlternateFirst}
Let $L_0$ be a Lie algebra with $G$-action,
$M$ be a faithful finite dimensional irreducible $L_0$-module with $G$-action,
and $\varphi \colon L_0 \to \mathfrak{gl}(M)$ be the corresponding
representation. Then for some $n\in\mathbb N$ there exists
 a polynomial $f \in P^G_n \backslash \Id^G(\varphi)$
  alternating in $\lbrace x_1, \ldots, x_\ell \rbrace$ and in $\lbrace y_1, \ldots, y_\ell \rbrace \subseteq \lbrace x_{\ell+1}, \ldots, x_n \rbrace$ where $\ell=\dim L_0$.
\end{lemma}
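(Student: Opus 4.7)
The plan is to combine Lemma~\ref{LemmaS} (which supplies an alternating polynomial for the central part $R_0$) with a Capelli-type alternating polynomial for the semisimple part $B_0$, and to splice them into a single polynomial alternating in two sets of $\ell$ variables. Without loss of generality $F$ is algebraically closed, and, as in the setting of Section~\ref{SectionAlt}, $L_0=B_0\oplus R_0$ is reductive with $R_0=Z(L_0)$.

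Fix a basis $a_1,\ldots,a_\ell$ of $L_0$ obtained by concatenating a basis $b_1,\ldots,b_s$ of $B_0$ with a basis $r_1,\ldots,r_t$ of $R_0$. Apply Lemma~\ref{LemmaRedIrr} to decompose $M=M_1\oplus\cdots\oplus M_q$ into $L_0$-submodules on which $R_0$ acts by scalars; each $M_i$ is then $B_0$-irreducible and $G$ acts transitively on $\{M_1,\ldots,M_q\}$. The associative $F$-subalgebra $A\subseteq\End_F(M)$ generated by $\varphi(L_0)$ and the operators of $G$ equals $\End_F(M)$, because $M$ is an irreducible $A$-module and the Jacobson density theorem applies.

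The construction of $f$ uses two building blocks. First, Lemma~\ref{LemmaS} yields $h\in P^G_t$ alternating in its $t$ variables with $h(\varphi(r_1),\ldots,\varphi(r_t))$ nondegenerate on $M$. Second, I would produce a multilinear $G$-polynomial $p$ alternating in two disjoint sets of $s$ variables such that evaluating those variables at $\varphi(b_1),\ldots,\varphi(b_s)$ (and the remaining variables at suitable elements of $\varphi(L_0)$ and their $G$-twists) yields a non-zero operator on $M$. To build $p$, fix $j$ and work inside $\End_F(M_j)\cong M_{m_j}(F)$; by density $\varphi(B_0)$ generates the whole associative algebra, so the classical Capelli polynomial provides an alternating polynomial of height $s\leqslant m_j^2$ that is a non-identity on the basis $\varphi(b_1)|_{M_j},\ldots,\varphi(b_s)|_{M_j}$. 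The $G$-twist device from the proof of Lemma~\ref{LemmaS}, based on the transitivity of $G$ on $\{M_1,\ldots,M_q\}$, then promotes this to a polynomial non-vanishing on all of $M$. Crucially, Capelli polynomials of height $\geqslant 2$ vanish whenever their alternating variables are replaced by scalar operators, and this vanishing on scalar evaluations survives the $G$-twist globalization.

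Finally, I would assemble
\begin{equation*}
f := \Alt_{\{x_1,\ldots,x_\ell\}} \Alt_{\{y_1,\ldots,y_\ell\}} \bigl( p(x_1,\ldots,x_s;\, y_1,\ldots,y_s;\, \ldots)\cdot h(x_{s+1},\ldots,x_\ell)\cdot h(y_{s+1},\ldots,y_\ell)\bigr).
\end{equation*}
Expanding the outer alternations as sums over shuffles and substituting $x_i=y_i=a_i$, any shuffle which routes a central element $r_k$ into the $p$-factor contributes zero (because $\varphi(r_k)$ acts as a scalar on each $M_j$ and $p$ is an identity on scalars), so only shuffles sending $B_0$-basis elements to $p$ and $R_0$-basis elements to $h$ survive; these add, without cancellation, to a non-zero multiple of $p(\varphi(b_1),\ldots)\cdot h(\varphi(r_1),\ldots)\cdot h(\varphi(r_1),\ldots)$, which is non-zero since $h(\varphi(r_1),\ldots)$ is invertible and $p(\ldots)$ was designed to be non-zero. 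The main obstacle is the second building block: combining the Capelli alternation on a single summand $M_j$ with the $G$-twist machinery transporting non-vanishing across $M=\bigoplus M_i$ requires a careful extension of the argument in Lemma~\ref{LemmaS}, and it is here that the $G$-transitivity on the $M_i$ must be used together with the vanishing of Capelli-type polynomials on scalars.
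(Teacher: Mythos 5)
Your plan has a genuine gap at the crucial cross-term cancellation. You want to splice the polynomial $h$ from Lemma~\ref{LemmaS} (alternating in $t=\dim R_0$ variables) with a Capelli-type polynomial $p$ (alternating in two sets of $s=\dim B_0$ variables), apply the outer alternations $\Alt_{\{x_1,\ldots,x_\ell\}}\Alt_{\{y_1,\ldots,y_\ell\}}$ to the product $p\cdot h\cdot h$, and argue that after substituting $x_i=y_i=\varphi(a_i)$ only the ``diagonal'' shuffle survives because ``$p$ is an identity on scalars.'' That last claim is false in the sense you need it: a Capelli polynomial $c_s(x_1,\ldots,x_s;y_1,\ldots,y_{s-1})$ does \emph{not} vanish when a single alternating variable is replaced by a scalar operator and the others are left general (already $c_2(\lambda\cdot 1, a; y)=\lambda[y,a]\neq 0$). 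It vanishes only when \emph{all} the alternating slots get scalars (or more generally when the substituted elements are linearly dependent). So a shuffle that routes one central element $\varphi(r_k)$ into $p$ while $b$'s fill the other $s-1$ slots produces a term you cannot discard. Symmetrically, there is no reason for $h$ to vanish when one of its slots receives a $\varphi(b_j)$ instead of a central element. The mechanism that actually makes this kind of cross-term vanish in the paper is the commutator-wrapping device of Theorem~\ref{TheoremAlternateFinal}: there the alternating variables destined for the ``Killing-form factor'' enter via $[u_j,[v_j,\cdot]]$, so substituting a central $u_j$ or $v_j$ kills the term by $[\varphi(R_0),\cdot]=0$. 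Your $p$ has no such built-in structure, so the cancellation argument does not go through.

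For comparison, the paper proves Lemma~\ref{LemmaAlternateFirst} without splitting $L_0=B_0\oplus R_0$ at all. It invokes density to get $\End_F(M)\cong M_q(F)$ (with $q=\dim M$) generated by $\varphi(L_0)$ and $\rho(G)$, takes Regev's \emph{central} polynomial $\hat f$ of $M_q(F)$, which alternates in two sets of $q^2$ variables, and extends a basis $\varphi(a_1),\ldots,\varphi(a_\ell)$ of $\varphi(L_0)$ to a basis of $\End_F(M)$ by products of the form $\varphi(a_{i_{j1}})\cdots\varphi(a_{i_{j,m_j}})\rho(g_j)$. Replacing the extra slots $x_{\ell+j}$ and $y_{\ell+j}$ of $\hat f$ by these products and moving all $\rho(g)$'s to the right yields a $G$-polynomial alternating in two $\ell$-element sets that acts as a non-zero scalar on $M$. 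This is simpler precisely because Regev's polynomial already alternates in sets of size $q^2$, so it only needs to be \emph{shrunk} to $\ell$ alternating variables, never \emph{spliced}; no cross-term analysis is required. If you want to salvage your route you would need to replace $p$ by a polynomial in which each alternating slot is forced through an $\ad$-commutator, as in Theorem~\ref{TheoremAlternateFinal}, but at that point you have essentially reconstructed the paper's more elaborate later argument rather than proving this lemma.
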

\begin{proof}
Since $M$ is irreducible, by the density theorem,
 $\End_F(M) \cong M_q(F)$ is generated by operators from $G$
and $\varphi(L_0)$. Here $q := \dim M$. Consider Regev's polynomial
$$\hat f(x_1, \ldots, x_{q^2}; y_1, \ldots, y_{q^2})
:=\sum_{\substack{\sigma \in S_q, \\ \tau \in S_q}} (\sign(\sigma\tau))
x_{\sigma(1)}\ y_{\tau(1)}\ x_{\sigma(2)}x_{\sigma(3)}x_{\sigma(4)}
\ y_{\tau(2)}y_{\tau(3)}y_{\tau(4)}\ldots
$$
$$ x_{\sigma\left(q^2-2q+2\right)}\ldots x_{\sigma\left(q^2\right)}
\ y_{\tau\left(q^2-2q+2\right)}\ldots y_{\tau\left(q^2\right)}.
$$ This is a central polynomial~\cite[Theorem~5.7.4]{ZaiGia}
for $M_k(F)$, i.e. $\hat f$ is not a polynomial identity for $M_q(F)$
and its values belong to the center of $M_q(F)$.

Let $a_1, \ldots, a_\ell$ be a basis of $L_0$.
Denote by $\rho$ the representation $G \to \mathrm{GL}(M)$.
Note that if we have the product of elements of
$\varphi(L_0)$ and $\rho(G)$,
we can always move the elements from $\rho(G)$
to the right, using $\rho(g)a=a^g \rho(g)$
for $g\in G$ and $a \in \varphi(L_0)$.
Then
$\varphi(a_1), \ldots, \varphi(a_\ell)$, $\left(\varphi\left(a_{i_{11}}\right)
\ldots \varphi\left(a_{i_{1,m_1}}\right)\right)\rho(g_1)$,
\ldots, $\left(\varphi\left(a_{i_{r,1}}\right)
\ldots \varphi\left(a_{i_{r,m_r}}
\right)\right)\rho(g_r)$, is a basis of $\End_F(M)$ for appropriate $i_{jk} \in \lbrace 1,2, \ldots, \ell \rbrace$, $g_j \in G$, since $\End_F(M)$ is generated by operators from $G$
and $\varphi(L_0)$. We replace $x_{\ell+j}$ with $z_{j1}
z_{j2} \ldots z_{j,m_j} \rho(g_j)$ and $y_{\ell+j}$ with $z'_{j1}
z'_{j2} \ldots z'_{j,m_j} \rho(g_j)$ in $\hat f$ and denote the expression
obtained by $\tilde f$. Using $\rho(g)a=a^g\rho(g)$ again,
we can move all $\rho(g)$, $g \in G$, in $\tilde f_q$ to the right and rewrite
 $\tilde f$ as $\sum_{g\in G}{f_{g}\, \rho(g)}$
where each $f_{g} \in P^G_{2\ell+2\sum_{j=1}^r m_j}$ is
an alternating in $x_1, \ldots, x_\ell$ and
in $y_1, \ldots, y_\ell$ polynomial.
Note that $\tilde f$ becomes a nonzero scalar operator on~$M$ under
the substitution $x_i=y_i=\varphi(a_i)$ for $ 1 \leqslant i \leqslant \ell$
and $z_{jk}=z_{jk}'=\varphi(a_{i_{jk}})$ for $1 \leqslant j \leqslant r$, $1 \leqslant k \leqslant m_j$.
 Thus $f_{g} \notin \Id^G(\varphi)$
for some $g \in G$ and we can take $f=f_{g}$.
\end{proof}

Let $k\ell \leqslant n$ where $k,\ell,n \in \mathbb N$ are some numbers.
 Denote by $Q^G_{\ell,k,n} \subseteq P^G_n$
the subspace spanned by all polynomials that are alternating in
$k$ disjoint subsets of variables $\{x^i_1, \ldots, x^i_\ell \}
\subseteq \lbrace x_1, x_2, \ldots, x_n\rbrace$, $1 \leqslant i \leqslant k$.

Theorem~\ref{TheoremAlternateFinal}
 is an analog of~\cite[Theorem~1]{GiaSheZai}.

\begin{theorem}\label{TheoremAlternateFinal}
Let $L_0=B_0 \oplus R_0$ be a reductive Lie algebra
with $G$-action over an algebraically closed field $F$ of
 characteristic $0$, $B_0$ be a maximal semisimple $G$-subalgebra,
$R_0$ be the center of $L_0$, and $\dim L_0 = \ell$.
Let $M$ be a faithful finite dimensional irreducible $L_0$-module with $G$-action.
Denote the corresponding representation $L_0 \to \mathfrak{gl}(M)$ by $\varphi$.
Then there exists $T \in \mathbb Z_+$ such that
for any $k \in \mathbb N$
there exists $f \in Q^G_{\ell, 2k, 2k\ell+T} \backslash \Id^G(\varphi)$.
\end{theorem}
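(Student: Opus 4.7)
The plan is to adapt the construction of Lemma~\ref{LemmaAlternateFirst} to $2k$ alternating sets by starting from a multi-alternating central polynomial for $\End_F(M)\cong M_q(F)$, where $q=\dim M$. Since $M$ is a faithful irreducible $L_0$-module with $G$-action, the density theorem gives that $\varphi(L_0)$ together with $\rho(G)$ generates $\End_F(M)$ as an associative algebra. I would fix a basis $a_1,\ldots,a_\ell$ of $L_0$ and extend $\varphi(a_1),\ldots,\varphi(a_\ell)$ to a basis of $\End_F(M)$ by fixed monomials $e_{\ell+j}=\varphi(a_{i_{j,1}})\cdots\varphi(a_{i_{j,m_j}})\rho(g_j)$, for $j=1,\ldots,q^2-\ell$; the numbers $m_j$ and the indices $i_{j,k}$, $g_j$ depend only on $\varphi$ and not on $k$.

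Next I would invoke Regev's multi-alternating central polynomial: for every $k$ there is a multilinear $\hat f_k\in F\langle X\rangle$ in $2kq^2$ variables, alternating in $2k$ disjoint sets of $q^2$ variables, taking scalar values on $M_q(F)$ and not being an identity. Into the $i$-th alternating block I substitute the first $\ell$ variables by fresh variables $X^{(i)}_1,\ldots,X^{(i)}_\ell$ (these become the $2k$ alternating sets of the sought $f$), and the remaining $q^2-\ell$ variables by product expressions encoding $e_{\ell+j}$ via auxiliary variables and right-multiplication by $\rho(g_j)$. After moving each $\rho(g)$ to the right through the commutation $\rho(g)a=a^g\rho(g)$, the resulting expression splits as $\sum_{g\in G} f_g\,\rho(g)$, with each $f_g\in P^G$ alternating in every block $X^{(i)}$. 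Since $\hat f_k$ is central and does not vanish on $M_q(F)$ under a substitution realizing the chosen basis of $\End_F(M)$, at least one $f_g$ lies outside $\Id^G(\varphi)$; Lemma~\ref{LemmaTwoColumns} may be invoked, as in the proof of Lemma~\ref{LemmaS}, to combine multiple candidates when necessary.

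The main difficulty --- and the step that I expect to require the most care --- is ensuring that the number of auxiliary non-alternating variables is bounded by a single constant $T$ independent of $k$. A literal block-by-block copy of the substitution from Lemma~\ref{LemmaAlternateFirst} would introduce $\sum_j m_j$ new auxiliary variables in each of the $2k$ blocks, giving $\sim 2k\sum_j m_j$ extras, which grows with $k$. To obtain a uniform bound, I would exploit the centrality of $\hat f_k$: a scalar-valued polynomial commutes past any operator, so one can ``factor out'' the extra-position substitutions into a single gadget of fixed size $T=(q^2-\ell)\cdot\max_j m_j$ (or an analogous quantity depending only on $\varphi$, $L_0$, $G$) that multiplies the purely-alternating core of $f$. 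Equivalently, one can proceed by induction on $k$: the base case $k=1$ is exactly Lemma~\ref{LemmaAlternateFirst}, and the inductive step juxtaposes a new pair of alternating blocks with the previous polynomial while the centrality of the attached gadget allows its auxiliary variables to be \emph{shared} across all blocks, rather than duplicated, without destroying either multilinearity or the non-identity property.
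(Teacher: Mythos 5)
Your proposal correctly identifies the crux of the theorem---keeping the number $T$ of auxiliary variables bounded independently of $k$---but the fix you sketch does not actually work, and this is precisely where the paper's argument departs from a ``block-by-block Regev'' strategy.

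The obstacle is multilinearity. If you take a $2k$-alternating Regev-type central polynomial for $M_q(F)$ and, in each of the $2k$ blocks of $q^2$ alternating positions, substitute $\ell$ fresh free variables and $q^2-\ell$ fixed product expressions built from auxiliary variables, then the auxiliary variables must be \emph{distinct} in different blocks: a variable appearing in two blocks appears twice in the resulting monomials, so the polynomial is no longer in $P^G_n$. Your ``sharing'' of the auxiliary gadget across blocks is therefore not available, and the count $\sim 2k\sum_j m_j$ cannot be escaped this way. The alternative you mention---factoring the gadget out to multiply a ``purely alternating core'' of $2k$ blocks of $\ell$ variables each---begs the question: Regev's construction requires $q^2$ alternating positions per block to produce a nonzero central value, and a multi-alternating polynomial in $2k$ blocks of only $\ell$ variables that is not a $G$-identity of $\varphi$ is exactly the object you are trying to build. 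Centrality lets you rearrange the \emph{value} of $\hat f_k$ relative to other operators, but it gives no mechanism in the free algebra to collapse the $2k$ disjoint auxiliary sets into one.

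The paper sidesteps this by not iterating Regev at all. After the base case (Lemma~\ref{LemmaAlternateFirst}, giving $f_1\in Q^G_{\ell,2,2\ell+T}$), each inductive step inserts \emph{double commutators} $[u_j,[v_j,x_i]]$ into the already-alternating variables and sums over $i$. Because the $\bar x_i$ run over a basis of $\varphi(L_0)$ and the polynomial is alternating, the insertion produces a factor $\tr(\ad_{\varphi(L_0)}\bar u_j\,\ad_{\varphi(L_0)}\bar v_j)$ multiplying the old polynomial (see equation~(\ref{EqKilling})). Alternating over the $u$'s and $v$'s yields a determinant of these traces, which is nonzero on the semisimple part by nondegeneracy of the Killing form of $\varphi(B_0)$; the central part $R_0$ is handled by the polynomial $h$ from Lemma~\ref{LemmaS}. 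Crucially, the new variables $u_1,\ldots,u_\ell,v_1,\ldots,v_\ell$ \emph{are} the two new alternating blocks, so no new auxiliary variables are introduced and $T$ stays fixed. This Killing-form gluing is the essential idea missing from your proposal, and without a replacement for it the inductive step does not close.
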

\begin{proof}
Let $f_1=f_1(x_1,\ldots, x_\ell,\ y_1,\ldots, y_\ell,
z_1, \ldots, z_T)$ be the polynomial from Lemma~\ref{LemmaAlternateFirst}
alternating in $x_1,\ldots, x_\ell$ and in $y_1,\ldots, y_\ell$.
Since $f_1 \in Q^G_{\ell, 2, 2\ell+T} \backslash \Id^G(\varphi)$,
 we may assume that $k > 1$. Note that
$$
f^{(1)}_1(u_1, v_1, x_1, \ldots, x_\ell,\ y_1,\ldots, y_\ell,
z_1, \ldots, z_T) :=$$ $$
\sum^\ell_{i=1} f_1(x_1, \ldots, [u_1, [v_1, x_i]],  \ldots, x_\ell,\ y_1,\ldots, y_\ell,
z_1, \ldots, z_T)$$
is alternating in $x_1,\ldots, x_\ell$ and in $y_1,\ldots, y_\ell$
and $$
f^{(1)}_1(\bar u_1, \bar v_1, \bar x_1, \ldots, \bar x_\ell,\
\bar y_1,\ldots, \bar y_\ell,
\bar z_1, \ldots, \bar z_T) =$$
$$
 \tr(\ad_{\varphi(L_0)} \bar u_1 \ad_{\varphi(L_0)} \bar v_1)
f_1(\bar x_1, \bar x_2, \ldots, \bar x_\ell,\ \bar y_1,\ldots, \bar y_\ell,
\bar z_1, \ldots, \bar z_T)
$$
 for any substitution of elements from $\varphi(L_0)$
 since we may assume $\bar x_1, \ldots, \bar x_\ell$ to be different basis elements.
Here $(\ad a) b = [a,b]$.

Let $$
f^{(j)}_1(u_1, \ldots, u_j, v_1, \ldots, v_j, x_1, \ldots, x_\ell,\ y_1,\ldots, y_\ell,
z_1, \ldots, z_T) :=$$ $$
\sum^\ell_{i=1} f^{(j-1)}_1(u_1, \ldots,  u_{j-1}, v_1, \ldots, v_{j-1},
 x_1, \ldots, [u_j, [v_j, x_i]],  \ldots, x_\ell,\ y_1,\ldots, y_\ell,
z_1, \ldots, z_T),$$
$2 \leqslant j \leqslant s$, $s = \dim B$. Note that
if we substitute an element from $\varphi(R_0)$ for $u_i$ or $v_i$,
then $f^{(j)}_1$ vanish since $R_0$ is the center of $L_0$.
Again,
$$
f^{(j)}_1(\bar u_1, \ldots, \bar u_j, \bar v_1, \ldots, \bar v_j, \bar x_1, \ldots, \bar x_\ell,\ \bar y_1,\ldots, \bar y_\ell, \bar z_1, \ldots, \bar z_T) =$$
$$ \tr(\ad_{\varphi(L_0)} \bar u_1 \ad_{\varphi(L_0)} \bar v_1)
 \tr(\ad_{\varphi(L_0)} \bar u_2 \ad_{\varphi(L_0)} \bar v_2)
 \ldots
 \tr(\ad_{\varphi(L_0)} \bar u_j \ad_{\varphi(L_0)} \bar v_j)\cdot
$$
\begin{equation}\label{EqKilling}
\cdot
f_1(\bar x_1, \bar x_2, \ldots, \bar x_\ell,\ \bar y_1,\ldots, \bar y_\ell,
\bar z_1, \ldots, \bar z_T)
\end{equation}

Let $h$ be the polynomial from Lemma~\ref{LemmaS}.
We define
$$ f_2(u_1, \ldots, u_\ell, v_1, \ldots, v_\ell,
x_1, \ldots, x_\ell, y_1, \ldots, y_\ell, z_1, \ldots, z_T) :=
$$ $$\sum_{\sigma, \tau \in S_\ell}
\sign(\sigma\tau)
f^{(s)}_1(u_{\sigma(1)}, \ldots, u_{\sigma(s)}, v_{\tau(1)}, \ldots, v_{\tau(s)}, x_1, \ldots, x_\ell,\ y_1,\ldots, y_\ell,
z_1, \ldots, z_T)$$ $$\cdot h(u_{\sigma(s+1)}, \ldots, u_{\sigma(\ell)})
h(v_{\tau(s+1)}, \ldots, v_{\tau(\ell)}).$$
Then $f_2 \in Q^G_{\ell, 4, 4\ell+T}$. Suppose $a_1, \ldots, a_s \in \varphi(B_0)$
and $a_{s+1}, \ldots, a_\ell \in \varphi(R_0)$ form a basis of $\varphi(L_0)$.
Consider a substitution $x_i=y_i=u_i=v_i=a_i$, $1 \leqslant i \leqslant \ell$.
Suppose that the values $z_j=\bar z_j$, $1 \leqslant j \leqslant T$, are chosen
in such a way that $f_1(a_1, \ldots, a_\ell, a_1, \ldots, a_\ell,
\bar z_1, \ldots, \bar z_T)\ne 0$. We claim that $f_2$ does not vanish either.
Indeed,
$$ f_2(a_1, \ldots, a_\ell, a_1, \ldots, a_\ell,
a_1, \ldots, a_\ell, a_1, \ldots, a_\ell, \bar z_1, \ldots, \bar z_T) = $$
$$\sum_{\sigma, \tau \in S_\ell}
\sign(\sigma\tau)
f^{(s)}_1(a_{\sigma(1)}, \ldots, a_{\sigma(s)}, a_{\tau(1)}, \ldots, a_{\tau(s)}, a_1, \ldots, a_\ell,\ a_1,\ldots, a_\ell,
\bar z_1, \ldots, \bar z_T)$$ $$\cdot h(a_{\sigma(s+1)}, \ldots, a_{\sigma(\ell)})
h(a_{\tau(s+1)}, \ldots, a_{\tau(\ell)})=$$
$$\left(\sum_{\sigma, \tau \in S_s}
\sign(\sigma\tau)
f^{(s)}_1(a_{\sigma(1)}, \ldots, a_{\sigma(s)}, a_{\tau(1)}, \ldots, a_{\tau(s)}, a_1, \ldots, a_\ell,\ a_1,\ldots, a_\ell,
\bar z_1, \ldots, \bar z_T)\right)\cdot$$
 $$ \left(
\sum_{ \pi, \omega \in S\lbrace s+1, \ldots,
\ell \rbrace} \sign(\pi\omega)
 h(a_{\pi(s+1)}, \ldots, a_{\pi(\ell)})
h(a_{\omega(s+1)}, \ldots, a_{\omega(\ell)})\right)$$
 since $a_j$, $s < j \leqslant \ell$,
belong to the center of $\varphi(L_0)$
and $f^{(s)}_j$ vanishes if we substitute
such $a_i$ for $u_i$ or $v_i$.
Here $S\lbrace s+1, \ldots,
\ell \rbrace$ is the symmetric group on $\lbrace s+1, \ldots,
\ell \rbrace$. Note that $h$ is alternating. Using~(\ref{EqKilling}), we obtain
$$ f_2(a_1, \ldots, a_\ell, a_1, \ldots, a_\ell,
a_1, \ldots, a_\ell, a_1, \ldots, a_\ell, \bar z_1, \ldots, \bar z_T) = $$
$$ \left(
\sum_{\sigma, \tau \in S_s}
\sign(\sigma\tau) \tr(\ad_{\varphi(L_0)} a_{\sigma(1)}
\ad_{\varphi(L_0)} a_{\tau(1)})  \ldots \tr(\ad_{\varphi(L_0)} a_{\sigma(s)}
\ad_{\varphi(L_0)} a_{\tau(s)}) \right)\cdot$$ $$
f_1(a_1, \ldots, a_\ell,\ a_1,\ldots, a_\ell,
\bar z_1, \ldots, \bar z_T)  ((\ell-s)!)^2
\left(h(a_{s+1}, \ldots, a_\ell)\right)^2.
$$
 Note that
$$\sum_{\sigma, \tau \in S_s}
\sign(\sigma\tau) \tr(\ad_{\varphi(L_0)} a_{\sigma(1)}
\ad_{\varphi(L_0)} a_{\tau(1)})  \ldots \tr(\ad_{\varphi(L_0)} a_{\sigma(s)}
\ad_{\varphi(L_0)} a_{\tau(s)})
=$$ $$\sum_{\sigma, \tau \in S_s}
\sign(\sigma\tau) \tr(\ad_{\varphi(L_0)} a_{1}
\ad_{\varphi(L_0)} a_{\tau\sigma^{-1}(1)})  \ldots
 \tr(\ad_{\varphi(L_0)} a_{s}
\ad_{\varphi(L_0)} a_{\tau\sigma^{-1}(s)})\mathrel{\stackrel{(\tau'=\tau\sigma^{-1})}{=}}$$
$$\sum_{\sigma, \tau' \in S_s}
\sign(\tau') \tr(\ad_{\varphi(L_0)} a_{1}
\ad_{\varphi(L_0)} a_{\tau'(1)})  \ldots
 \tr(\ad_{\varphi(L_0)} a_{s}
\ad_{\varphi(L_0)} a_{\tau'(s)})=$$
$$s!\det(\tr(\ad_{\varphi(L_0)} a_i \ad_{\varphi(L_0)} a_j))_{i,j=1}^s=
s!\det(\tr(\ad_{\varphi(B_0)} a_i \ad_{\varphi(B_0)} a_j))_{i,j=1}^s \ne 0$$
since the Killing form $\tr(\ad x \ad y)$ of the semisimple
Lie algebra $\varphi(B_0)$ is nondegenerate.
Thus $$ f_2(a_1, \ldots, a_\ell, a_1, \ldots, a_\ell,
a_1, \ldots, a_\ell, a_1, \ldots, a_\ell, \bar z_1, \ldots, \bar z_T) \ne 0. $$
Note that if $f_1$ is alternating in some of $z_1,\ldots, z_T$,
the polynomial $f_2$
is alternating in those variables too.
Thus if we apply the same procedure to
$f_2$ instead of $f_1$, we obtain $f_3 \in Q^G_{\ell, 6, 6\ell+T}$.
Analogously, we define $f_4$ using $f_3$, $f_5$ using $f_4$, etc.
Eventually, we obtain
$f=f_k \in Q^G_{\ell, 2k, 2k\ell+T} \backslash \Id^G(\varphi)$.
\end{proof}

\section{Lower bound}
\label{SectionLower}

By the definition of $d=d(L)$,
 there exist $G$-invariant ideals $I_1$, $I_2$, \ldots, $I_r$,
$J_1$, $J_2$, \ldots, $J_r$, $r \in \mathbb Z_+$, of the algebra $L$,
satisfying Conditions 1--2, $J_k \subseteq I_k$, such that
$$d = \dim \frac{L}{\Ann(I_1/J_1) \cap \dots \cap \Ann(I_r/J_r)}.$$
We consider the case $d > 0$.

Without loss of generality we may assume that
$$ \bigcap\limits_{k=1}^r \Ann(I_k/J_k) \ne
\bigcap\limits_{\substack{\phantom{,}k=1,\\ k\ne\ell}}^r \Ann(I_k/J_k)$$
for all $1 \leqslant \ell \leqslant r$.
In particular, $L$ has nonzero action on each $I_k/J_k$.

Our aim is to present a partition $\lambda \vdash n$
with $m(L, G, \lambda)\ne 0$ such that $\dim M(\lambda)$
has the desired asymptotic behavior.
We will glue alternating polynomials constructed
 in Theorem~\ref{TheoremAlternateFinal}
for faithful irreducible modules
over reductive algebras. In order to do this,
we have to choose the reductive algebras.

\begin{lemma}\label{LemmaChooseReduct}
There exist $G$-invariant ideals $B_1, \ldots, B_r$
in $B$ and $G$-invariant subspaces
 $\tilde R_1,\ldots,\tilde R_r \subseteq S$
(some of $\tilde R_i$ and $B_j$ may be zero)
such that
\begin{enumerate}
\item $B_1+ \ldots + B_r=B_1\oplus \ldots \oplus B_r$;
\item $\tilde R_1+ \ldots + \tilde R_r=\tilde R_1\oplus \ldots \oplus \tilde R_r$;
\item $\sum\limits_{k=1}^r \dim (B_k\oplus  \tilde R_k) = d$;
\item $I_k/J_k$ is a faithful
$(B_k\oplus\tilde R_k\oplus N)/N$-module;
\item $I_k/J_k$ is an irreducible
$\left(\sum_{i=1}^r (B_i\oplus \tilde R_i)\oplus N
\right)/N$-module with $G$-action;
\item $B_i I_k/J_k = \tilde R_i I_k/J_k = 0$ for $i > k$.
\end{enumerate}
\end{lemma}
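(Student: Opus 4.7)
The plan is to build $B_k$ and $\tilde R_k$ layer by layer along the descending chain of annihilators. For $1 \leqslant k \leqslant r+1$, set $A^B_k := \bigcap_{j<k} \Ann_B(I_j/J_j)$ and $A^S_k := \bigcap_{j<k} \Ann_S(I_j/J_j)$, with the conventions $A^B_1 = B$ and $A^S_1 = S$; note the recursion $A^B_{k+1} = A^B_k \cap \Ann_B(I_k/J_k)$ and the analogous one for $S$. Since $G$ acts on the semisimple Lie algebra $B$ by automorphisms, $B$ decomposes as a direct sum of $G$-simple ideals (one for each $G$-orbit of simple ideals), and every $G$-invariant ideal of $B$ is a direct sum of some of these. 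In particular each $A^B_k$ is of this form, so I choose a $G$-invariant ideal complement $B_k$ with $A^B_k = B_k \oplus A^B_{k+1}$. For the $S$-side, Maschke's theorem (or the argument of Lemma~\ref{LemmaComplIrrGInv} applied with the trivial $L$-action) lets me pick a $G$-invariant subspace $\tilde R_k \subseteq A^S_k$ with $A^S_k = \tilde R_k \oplus A^S_{k+1}$.

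Telescoping gives $B = B_1 \oplus \cdots \oplus B_r \oplus A^B_{r+1}$ and $S = \tilde R_1 \oplus \cdots \oplus \tilde R_r \oplus A^S_{r+1}$, which verifies (1) and (2). For (3), Lemma~\ref{LemmaIrrAnnBS} yields $\bigcap_{k=1}^r \Ann(I_k/J_k) = A^B_{r+1} \oplus A^S_{r+1} \oplus N$, hence
\[
d = \dim L - \dim \bigcap_{k=1}^r \Ann(I_k/J_k) = \dim(B/A^B_{r+1}) + \dim(S/A^S_{r+1}) = \sum_{k=1}^r \dim(B_k \oplus \tilde R_k).
\]
For (6), whenever $i > k$ one has $B_i \subseteq A^B_i \subseteq A^B_{k+1} \subseteq \Ann_B(I_k/J_k)$ and analogously $\tilde R_i \subseteq \Ann_S(I_k/J_k)$, so both annihilate $I_k/J_k$.

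For (4) the kernel of the $\tilde R_k$-action on $I_k/J_k$ equals $\tilde R_k \cap \Ann_S(I_k/J_k) \subseteq \tilde R_k \cap A^S_{k+1} = 0$, while the kernel of the $B_k$-action is the $G$-invariant ideal $B_k \cap \Ann_B(I_k/J_k)$ of $B_k$, which must be a direct sum of the $G$-simple summands of $B_k$; by the very choice of $B_k$ no such summand lies in $\Ann_B(I_k/J_k)$, and being $G$-simple it meets $\Ann_B(I_k/J_k)$ trivially, so the kernel is $0$. Together with $B_k \cap S = 0$ and $N \subseteq \Ann(I_k/J_k)$, this gives (4). For (5), the $L$-action on $I_k/J_k$ kills $A^B_{r+1} \oplus A^S_{r+1}$, hence factors through $L' := \bigoplus_{i=1}^r (B_i \oplus \tilde R_i) \oplus N$, which is a $G$-invariant Lie subalgebra because $[B,S]=0$, $[S,S] \subseteq [R,R] \subseteq N$, and $N$ is an ideal; therefore $L$- and $L'$-submodules with $G$-action of $I_k/J_k$ coincide, and irreducibility transfers, including to the corresponding $L'/N$-module. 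The main delicate point will be the faithfulness step in (4), which relies on the $G$-simple decomposition of $B$ together with the observation that a $G$-simple ideal not contained in a given $G$-invariant ideal meets it only in zero.
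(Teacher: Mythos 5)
Your proof is correct and follows essentially the same route as the paper: the paper sets $N_\ell := \bigcap_{k\leqslant\ell}\Ann(I_k/J_k)$ (so your $A^B_k = N_{k-1}\cap B$, $A^S_k = N_{k-1}\cap S$), picks $G$-invariant complements $B_\ell$ and $\tilde R_\ell$ in $N_{\ell-1}\cap B$ and $N_{\ell-1}\cap S$ exactly as you do, and derives all six properties from Lemma~\ref{LemmaIrrAnnBS} and the telescoping decomposition $L = \bigoplus_i(B_i\oplus\tilde R_i)\oplus N_r$. Your detour through the $G$-simple decomposition of $B$ for the faithfulness of the $B_k$-action is unnecessary — $B_k\cap\Ann_B(I_k/J_k)=B_k\cap A^B_{k+1}=0$ follows directly from the choice of complement, just as for $\tilde R_k$ — but it is not wrong.
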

\begin{proof}
Consider $N_\ell := \bigcap\limits_{k=1}^\ell \Ann (I_k/J_k)$,
$1 \leqslant \ell \leqslant r$, $N_0 = L$.
Note that $N_{\ell}$ are $G$-invariant.
Since $B$ is semisimple, we can choose such
$G$-invariant ideals $B_\ell$
that $N_{\ell-1} \cap B =
 B_\ell \oplus (N_\ell \cap B)$.
Also we can choose such $G$-invariant subspaces
 $\tilde R_\ell$ that
 $N_{\ell-1} \cap S = \tilde R_\ell
  \oplus (N_\ell \cap S)$.
Hence Properties 1, 2, 6 hold.

 By Lemma~\ref{LemmaIrrAnnBS},
 $N_k = (N_k \cap B) \oplus  (N_k \cap S) \oplus  N$.
 Thus Property~4 holds. Furthermore,
 $$N_{\ell-1} = B_\ell \oplus  (N_\ell \cap B) \oplus
  \tilde R_\ell \oplus
   (N_\ell \cap S) \oplus  N
   = (B_\ell\oplus \tilde R_\ell)\oplus N_\ell$$
   (direct sum of subspaces).
   Hence $L = \left(\bigoplus_{i=1}^r (B_i\oplus \tilde R_i)\right)
   \oplus  N_r$,  and
   Properties 3 and 5 hold too.
\end{proof}

Let $A$ be the associative subalgebra
in $\End_F (L)$ generated by operators from $\ad L$
and $G$. Then $J(A)^p=0$ for some $p\in\mathbb N$.
Denote by $A_2$ a subalgebra of $\End_F(L)$ generated by $\ad L$ only.
 Let $a_{\ell 1}, \ldots, a_{\ell, k_\ell}$ be a basis of $\tilde R_\ell$.

\begin{lemma}\label{LemmaJordanDecomp}
There exist decompositions $\ad a_{ij} = c_{ij} + d_{ij}$,
$1 \leqslant i \leqslant r$, $1 \leqslant j \leqslant k_i$,
such that $c_{ij} \in A$ acts as a diagonalizable operator on $L$, $d_{ij} \in J(A)$,
 elements $c_{ij}$ commute with each other,
 and $c_{ij}$ and $d_{ij}$ are polynomials in $\ad a_{ij}$.
 Moreover, $R_\ell := \langle c_{\ell1}, \ldots, c_{\ell, k_\ell} \rangle_F$
 are $G$-invariant subspaces in $A$.

\end{lemma}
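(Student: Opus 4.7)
The plan is first to take, for each pair $(i,j)$, the classical Jordan--Chevalley decomposition of $\ad a_{ij}\in\End_F(L)$, writing $\ad a_{ij}=c_{ij}+d_{ij}$ with $c_{ij}$ diagonalizable, $d_{ij}$ nilpotent, $[c_{ij},d_{ij}]=0$, and both realized as polynomials in $\ad a_{ij}$ with zero constant term (see \cite[Section~4.2]{Humphreys}); these polynomial expressions automatically place $c_{ij}$ and $d_{ij}$ in $A$. From here I would verify, in turn, that $d_{ij}\in J(A)$, that the $c_{ij}$ mutually commute, and that each $R_\ell$ is $G$-invariant.

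For $d_{ij}\in J(A)$: any irreducible $A$-module $W$ appears as a subquotient of $L$ because $A\subseteq\End_F(L)$. Exactly as in the proof of Lemma~\ref{LemmaLR}, E.~Cartan's theorem identifies the image of $\ad L$ in $\mathfrak{gl}(W)$ with the sum of a semisimple Lie algebra and the scalars of $\mathfrak{gl}(W)$, and $\ad R$ lies inside the scalar part; so $\ad a_{ij}|_W$ is scalar. Since $c_{ij}|_W$ is semisimple, $d_{ij}|_W$ is nilpotent and the two commute, uniqueness of the Jordan decomposition of this scalar operator forces $d_{ij}|_W=0$, and as $W$ ranges over all irreducible $A$-modules this gives $d_{ij}\in J(A)$.

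The main obstacle will be mutual commutativity of the $c_{ij}$. Since $a_{ij},a_{kl}$ lie in $\tilde R\subseteq R$ and $[R,R]\subseteq N$, the bracket $[\ad a_{ij},\ad a_{kl}]=\ad[a_{ij},a_{kl}]$ lies in $\ad N$; applying the previous scalar-on-irreducibles argument to any $n\in N$, where $\ad n$ is simultaneously nilpotent and scalar on each irreducible, hence zero, gives $\ad N\subseteq J(A)$. So the $\ad a_{ij}$ commute pairwise modulo $J(A)$, and their images in the semisimple quotient $A/J(A)$ all lie in $Z(A/J(A))$. To upgrade this to honest commutativity of the $c_{ij}$, I would fix a $G$-invariant Wedderburn--Malcev complement $A_{\mathrm{ss}}\subseteq A$ to $J(A)$, produced by averaging any complement over the finite group $G$, and combine the centrality of the images with the diagonalizability of the $c_{ij}$ and the uniqueness of Jordan decomposition to identify each $c_{ij}$ with its projection in $Z(A_{\mathrm{ss}})$; this centralization step is the technical heart of the argument.

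Finally, $G$-invariance of $R_\ell$ follows from $G$-equivariance of the Jordan--Chevalley decomposition together with the commutativity just shown. For $g\in G$, $G$-invariance of $\tilde R_\ell$ yields $g(\ad a_{\ell j})g^{-1}=\ad(ga_{\ell j})=\sum_k\mu_{jk}\ad a_{\ell k}$ for suitable scalars $\mu_{jk}\in F$, and once the $c_{\ell k}$ commute with each other and with all the $d_{\ell k}$, the semisimple part of this sum is simply $\sum_k\mu_{jk}c_{\ell k}\in R_\ell$. By uniqueness this equals $gc_{\ell j}g^{-1}$, so $R_\ell$ is $G$-invariant.
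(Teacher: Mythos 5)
Your route via the Jordan--Chevalley decomposition of each $\ad a_{ij}$ is genuinely different from the paper's. The paper instead takes a $G$-invariant Wedderburn--Malcev decomposition $A_1 = \tilde A_1 \oplus J(A_1)$ of the subalgebra $A_1$ generated by all the $\ad a_{ij}$, defines $c_{ij}$ and $d_{ij}$ to be the two components of $\ad a_{ij}$ with respect to it, and gets commutativity from the observation that $A_1$ embeds (via Lie's theorem applied to the solvable algebra $(\ad R)+J(A)$) into the upper-triangular matrices, forcing $\tilde A_1 \cong Fe_1 \oplus \cdots \oplus Fe_t$ to be commutative; $d_{ij}\in J(A)$ is then obtained from a separate nilpotency argument showing that $J(A_1)+J(A)$ generates a nilpotent ideal of $A$.

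Two issues with your version. In the argument that $d_{ij}\in J(A)$, an irreducible $A$-module $W$ need not be irreducible as an $L$-module, so Cartan's theorem does not give that $\ad a_{ij}|_W$ is scalar. It is, however, diagonalizable: the $L$-socle of $W$ is permuted by $\rho(G)$, hence is an $A$-submodule, hence equals $W$, so $W$ is $L$-semisimple and $\ad R$ acts by scalars on each $L$-irreducible summand; this is enough to force $d_{ij}|_W=0$, so the conclusion survives with a small repair. The serious gap is the commutativity step, which you flag as the technical heart but then only sketch, and the sketch does not go through. The images $\bar a_{ij}$ in $A/J(A)$ are not central: they commute with the image of $\ad L$ (since $[a_{ij},L]\subseteq N$ and $\ad N\subseteq J(A)$), but $\rho(g)^{-1}(\ad a_{ij})\rho(g)=\ad(a_{ij}^{g^{-1}})$, and $\ad(a_{ij}^{g^{-1}}-a_{ij})\in\ad S$ need not lie in $J(A)$. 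Even granting only pairwise commutativity of the $\bar a_{ij}$ (which is true and is all you need), the proposed identification of the Jordan part $c_{ij}$ with the projection onto a fixed Wedderburn complement $A_{\mathrm{ss}}$ is unjustified: the semisimple part of an element need not lie in a given complement, and elements commuting modulo the radical can have non-commuting semisimple parts. For instance in $UT_2(F)$ take $a=e_{11}+e_{12}$ and $b=e_{22}$; then $[a,b]=e_{12}\in J(UT_2(F))$, both are idempotent and hence equal to their own semisimple parts, yet $[a,b]\ne 0$. So the commutativity of your $c_{ij}$ --- the one property that the Jordan--Chevalley construction does not hand you for free --- is exactly what remains unproved, and closing it seems to require the extra structure the paper extracts from the upper-triangular embedding of $A_1$. (A minor additional point: a $G$-invariant Wedderburn complement is not produced by averaging a complement over $G$, since an average of subalgebras is only a subspace; its existence is Taft's theorem, which the paper cites.)
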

\begin{proof}
Consider the solvable $G$-invariant Lie algebra $(\ad R)+J(A)$.
 In virtue of the Lie theorem, there exists
 a basis in $L$ in which all the operators from $(\ad R)+J(A)$ have
 upper triangular matrices. Denote the corresponding
 embedding $A  \hookrightarrow M_m(F)$ by $\psi$.
 Here $m := \dim L$.

Let $A_1$ be the associative algebra generated by $\ad a_{ij}$,
$1 \leqslant i \leqslant r$, $1 \leqslant j \leqslant k_i$.
This algebra is $G$-invariant since for every fixed $i$
the elements $a_{ij}$, $1 \leqslant j \leqslant k_i$,
form a basis of the $G$-invariant subspace $\tilde R_i$.
 By the $G$-invariant Wedderburn~--- Malcev theorem~\cite[Theorem~1, Remark~1]{Taft},  $A_1 =
 \tilde A_1 \oplus J(A_1)$
 (direct sum of subspaces)
where $\tilde A_1$ is a $G$-invariant semisimple subalgebra
of $A_1$.
Since $\psi(\ad R) \subseteq \mathfrak{t}_m(F)$, we have
$\psi(A_1) \subseteq UT_m(F)$. Here $UT_m(F)$ is the associative algebra
 of upper triangular matrices $m\times m$.
 There is a decomposition $$UT_m(F) = Fe_{11}\oplus Fe_{22}\oplus
 \dots\oplus Fe_{mm}\oplus \tilde N$$
 where $$\tilde N := \langle e_{ij} \mid 1 \leqslant i < j \leqslant m \rangle_F$$
 is a nilpotent ideal. Thus there is no subalgebras in $A_1$
 isomorphic to $M_2(F)$ and
  $\tilde A_1=Fe_1 \oplus \dots \oplus Fe_t$
    for some idempotents $e_i \in A_1$.
Denote for every $a_{ij}$  its component in $J(A_1)$ by $d_{ij}$
and its component in $Fe_1 \oplus \dots \oplus Fe_t$ by $c_{ij}$.
Note that $e_i$ are commuting diagonalizable operators. Thus they
have a common basis of eigenvectors in $L$ and $c_{ij}$
are commuting diagonalizable operators too.
Moreover $$\ad a^g_{ij} = c^g_{ij}+d^g_{ij} \in \langle \ad a_{i\ell}
\mid 1 \leqslant \ell \leqslant k_i \rangle_F
\subseteq \langle c_{i\ell}
\mid 1 \leqslant \ell \leqslant k_i \rangle_F
\oplus \langle d_{i\ell}
\mid 1 \leqslant \ell \leqslant k_i \rangle_F$$
for all $g \in G$. Thus $R_i$ is $G$-invariant.

We claim that the space $J(A_1)+J(A)$ generates a nilpotent
$G$-invariant ideal $I$ in $A$.
First, $\psi(J(A_1)), \psi(J(A)) \subseteq UT_m(F)$
and consist of nilpotent elements. Thus the corresponding
matrices have zero diagonal elements and
$\psi(J(A_1)), \psi(J(A)) \subseteq \tilde N$.
Denote $\tilde N_k := \langle e_{ij} \mid i+k \leqslant j \rangle_F \subseteq \tilde N$.
Then $$\tilde N = \tilde N_1 \supsetneqq \tilde N_2 \supsetneqq \ldots \supsetneqq \tilde N_{m-1} \supsetneqq \tilde N_m = \lbrace 0\rbrace.$$
Let $\height_{\tilde N} a := k$ if $\psi(a) \in \tilde N_k$, $\psi(a) \notin \tilde N_{k+1}$.

Recall that $(J(A))^p =0$.
We claim that $I^{m+p} = 0$.
Let $\rho \colon G \to \mathrm{GL}(L)$ be the $G$-action on $L$.
Using the property
\begin{equation}\label{EqGMoveRight}
\rho(g)a=a^g\rho(g)
\end{equation}
where $a\in A_2$, $g\in G$, we obtain that
 the space $I^{m+p}$
 is a span of $h_1 j_1 h_2 j_2 \ldots j_{m+p} h_{m+p+1} \rho(g)$
 where $j_k \in J(A_1) \cup J(A)$, $h_k \in A_2 \cup \lbrace 1\rbrace$,
 $g \in G$.
 If at least $p$ elements $j_k$ belong to $J(A)$,
 then the product equals $0$.
Thus we may assume that at least $m$
elements $j_k$ belong to $J(A_1)$.

 Let $j_i \in J(A_1)$, $h_i \in A_2 \cup \lbrace 1\rbrace$.
We prove by induction on $\ell$ that
$j_1 h_1 j_2 h_2 \ldots h_{\ell-1} j_{\ell}$
can be expressed as a sum of $\tilde j_1 \tilde j_2 \ldots \tilde j_\alpha j'_1 j'_2\ldots j'_\beta
a$
where $\tilde j_i \in J(A_1)$, $j'_i \in J(A)$,
$a \in A_2 \cup \lbrace 1\rbrace$,
 and $\alpha+\sum_{i=1}^\beta \height_{\tilde N} j'_i \geqslant \ell$.
 Indeed, suppose that
  $j_1 h_1 j_2 h_2 \ldots h_{\ell-2} j_{\ell-1}$
 can be expressed as a sum of $\tilde j_1 \tilde j_2 \ldots \tilde j_\gamma j'_1 j'_2\ldots j'_\varkappa
a$
where $\tilde j_i \in J(A_1)$, $j'_i \in J(A)$,
$a \in A_2 \cup \lbrace 1\rbrace$,
 and $\gamma+\sum_{i=1}^\varkappa \height_{\tilde N} j'_i \geqslant \ell-1$.
 Then
 $j_1 h_1 j_2 h_2 \ldots j_{\ell-1} h_{\ell-1}j_{\ell}$
 is a sum of
 $$\tilde j_1 \tilde j_2 \ldots \tilde j_\gamma j'_1 j'_2\ldots j'_\varkappa
a h_{\ell-1}j_{\ell} =
\tilde j_1 \tilde j_2 \ldots \tilde j_\gamma j'_1 j'_2\ldots j'_\varkappa
[ah_{\ell-1}, j_{\ell}] + \tilde j_1 \tilde j_2 \ldots \tilde j_\gamma j'_1 j'_2\ldots j'_\varkappa
j_{\ell} (a h_{\ell-1}).$$
Note that, in virtue of the Jacobi identity
and Lemma~\ref{LemmaLR},  $[ah_{\ell-1}, j_{\ell}] \in
J(A)$. Thus it is sufficient to consider only the second term.
However
$$\tilde j_1 \tilde j_2 \ldots \tilde j_\gamma j'_1 j'_2\ldots j'_\varkappa
j_{\ell} (a h_{\ell-1})
= \tilde j_1 \tilde j_2 \ldots \tilde j_\gamma j_{\ell} j'_1 j'_2\ldots j'_\varkappa
 (a h_{\ell-1})+$$ $$\sum_{i=1}^{\varkappa}
\tilde j_1 \tilde j_2 \ldots \tilde j_\gamma  j'_1 j'_2\ldots j'_{i-1}[j'_{i}, j_\ell]
j'_{i+1}\ldots j'_\varkappa (a h_{\ell-1}).$$
Since $[j'_{i}, j_\ell] \in J(A)$ and $\height_{\tilde N} [j'_{i}, j_\ell] \geqslant 1+ \height_{\tilde N} j'_i$,
all the terms have  the desired form.
 Therefore, $$j_1 h_1 j_2 h_2 \ldots j_{m-1} h_{m-1}j_{m}
 \in \psi^{-1}(\tilde N_m) = \lbrace 0 \rbrace,$$ $I^{m+p}=0$, and $$
 J(A) \subseteq J(A_1)+J(A) \subseteq I \subseteq J(A).$$ In particular,
$d_{ij} \in J(A_1) \subseteq J(A)$.
\end{proof}

Denote $$\tilde B := \left(\bigoplus_{i=1}^r \ad B_i\right)\oplus \langle c_{ij} \mid 1\leqslant i \leqslant r,  1 \leqslant j \leqslant k_i
  \rangle_F,$$
  $$\tilde B_0 := (\ad B)\oplus \langle c_{ij} \mid 1\leqslant i \leqslant r,  1 \leqslant j \leqslant k_i
  \rangle_F \subseteq A.$$

\begin{lemma}\label{LemmaBcReducible}
The space $L$ is a completely reducible $\tilde B_0$-module with $G$-action.
Moreover, $L$ is a completely reducible $(\ad B_k)\oplus R_k$-module
with $G$-action for any $1 \leqslant k \leqslant r$.
\end{lemma}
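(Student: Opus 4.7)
The plan is to identify $\tilde B_0$ as a reductive Lie subalgebra of $\End_F(L)$ whose centre consists of commuting diagonalizable operators, deduce complete reducibility of $L$ from Weyl's theorem applied on the weight spaces of this centre, and finally pass to the $G$-equivariant version via Lemma~\ref{LemmaComplIrrGInv}. First I would check that $\tilde B_0=(\ad B)\oplus T$ with $T:=\langle c_{ij}\rangle_F$ really is a Lie subalgebra with the expected structure. The summand $\ad B$ is semisimple; $T$ is abelian since the $c_{ij}$ commute by Lemma~\ref{LemmaJordanDecomp}; and $[B,S]=0$ from Lemma~\ref{LemmaRS} gives $[\ad b,\ad s]=\ad[b,s]=0$ for all $b\in B$, $s\in S$, so every polynomial in $\ad a_{ij}$ (with $a_{ij}\in\tilde R_i\subseteq S$) commutes with $\ad B$. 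In particular $T$ commutes with $\ad B$, the sum $\tilde B_0=(\ad B)\oplus T$ is direct as a Lie algebra, and $T$ is central in $\tilde B_0$ and acts on $L$ by commuting diagonalizable operators.

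Next I would establish complete reducibility of $L$ as a plain $\tilde B_0$-module. Since the action of $T$ is by commuting diagonalizable operators, one gets a weight decomposition $L=\bigoplus_\chi L_\chi$ indexed by characters $\chi\colon T\to F$, with $L_\chi=\{v\in L\mid tv=\chi(t)v \text{ for all } t\in T\}$. Because $T$ commutes with $\ad B$, each $L_\chi$ is $\ad B$-stable, hence a $\tilde B_0$-submodule on which $T$ acts by scalars. Weyl's theorem then decomposes each $L_\chi$ into irreducible $\ad B$-submodules, all of which are automatically $\tilde B_0$-invariant since $T$ acts by scalars on $L_\chi$. Summing over $\chi$ yields the desired decomposition of $L$ into irreducible $\tilde B_0$-submodules.

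Finally, $\tilde B_0$ is $G$-invariant in $\End_F(L)$: $(\ad B)^g=\ad(B^g)=\ad B$ because $B$ is $G$-invariant, and $T=R_1\oplus\dots\oplus R_r$ is $G$-invariant by Lemma~\ref{LemmaJordanDecomp}. The natural $G$-action $x^g=\rho(g)\,x\,\rho(g)^{-1}$ then makes $\tilde B_0$ into a Lie $G$-algebra acting on $L$ as a module with $G$-action, and Lemma~\ref{LemmaComplIrrGInv} upgrades the decomposition to a $G$-equivariant one. For the second claim the same argument applies to $(\ad B_k)\oplus R_k$: $B_k$ is a $G$-invariant ideal of the semisimple algebra $B$, so $\ad B_k$ is semisimple and $G$-invariant; $R_k$ is abelian, $G$-invariant, consists of commuting diagonalizable operators, and commutes with $\ad B$ and hence with $\ad B_k$; so the weight-space reasoning together with Lemma~\ref{LemmaComplIrrGInv} goes through verbatim. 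The only nontrivial check—and the main thing to get right—is the commutation of $\ad B$ with the $c_{ij}$ deduced from $[B,S]=0$ and the fact that the $c_{ij}$ are polynomials in $\ad a_{ij}$; once this is in hand, the rest is bookkeeping.
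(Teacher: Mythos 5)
Your proposal is correct and follows essentially the same route as the paper: reduce to the non-$G$-equivariant case via Lemma~\ref{LemmaComplIrrGInv}, split $L$ into simultaneous eigenspaces of the commuting diagonalizable operators $c_{ij}$, establish $[c_{ij},\ad B]=0$ from $[B,S]=0$, the Jacobi identity, and the fact that $c_{ij}$ is a polynomial in $\ad a_{ij}$, and finally apply Weyl's theorem on each eigenspace. The only cosmetic difference is that you invoke Lemma~\ref{LemmaComplIrrGInv} at the end rather than at the outset, and you phrase the eigenspace decomposition in the language of characters of $T$; the substance is the same.
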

\begin{proof} By Lemma~\ref{LemmaComplIrrGInv}, it is sufficient to show that
$L$ is a completely reducible $\tilde B_0$-module
and a completely reducible $(\ad B_k)\oplus R_k$-module
disregarding the $G$-action.
 The elements $c_{ij}$ are diagonalizable on $L$ and commute.
  Therefore, an eigenspace of any $c_{ij}$ is invariant
   under the action of other $c_{k\ell}$. Using induction,
    we split $L = \bigoplus_{i=1}^\alpha W_i$
where $W_i$ are intersections of eigenspaces of $c_{k\ell}$
and elements $c_{k\ell}$ act as scalar operators on $W_i$.
In virtue of Lemmas~\ref{LemmaRS}, \ref{LemmaJordanDecomp}, and the Jacobi identity,
 $[c_{ij}, \ad B]=0$. Thus $W_i$ are $B$-submodules
 and $L$ is a completely reducible
 $\tilde B_0$-module and $(\ad B_k)\oplus R_k$-module since $B$ and $B_k$ are semisimple.
\end{proof}

\begin{lemma}\label{LemmaSiProperties}
There exist complementary subspaces $I_k=
\tilde T_k \oplus J_k$ such that
\begin{enumerate}
\item $\tilde T_k$ is a $B$-submodule and an irreducible $\tilde B$-submodule with $G$-action;
\item $\tilde T_k$ is a completely reducible faithful $(\ad B_k)\oplus R_k$-module
 with $G$-action;
\item $\sum\limits_{k=1}^r\dim ((\ad B_k)\oplus R_k) = d$;
\item $B_i \tilde T_k = R_i \tilde T_k = 0$ for $i > k$.
\end{enumerate}
\end{lemma}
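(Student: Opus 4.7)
The plan is to build the $\tilde T_k$'s by invoking the complete reducibility of $L$ as a $\tilde B_0$-module with $G$-action from Lemma~\ref{LemmaBcReducible}, then verify each of the four listed properties using the information already encoded in Lemmas~\ref{LemmaChooseReduct} and~\ref{LemmaJordanDecomp} together with the Jacobson-radical property of $J(A)$.

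First I would observe that both $I_k$ and $J_k$ are $\tilde B_0$-submodules of $L$: indeed $\ad B \subseteq \tilde B_0$ preserves every ideal, and each $c_{ij}$ is a polynomial (without constant term) in $\ad a_{ij}$ with $a_{ij}\in L$, so it also preserves $I_k$ and $J_k$. They are $G$-invariant by hypothesis. Lemma~\ref{LemmaBcReducible} then yields a $G$-invariant $\tilde B_0$-complement $\tilde T_k$ with $I_k=\tilde T_k\oplus J_k$. So as a $\tilde B_0$-module with $G$-action, $\tilde T_k\cong I_k/J_k$.

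The key simplifying remark is that $d_{ij}$ acts as zero on $I_k/J_k$. Indeed, $A$ is generated by $\ad L$ and $\rho(G)$, so $I_k/J_k$, being an irreducible $L$-module with $G$-action, is an irreducible $A$-module; since $d_{ij}\in J(A)$, it annihilates $I_k/J_k$. Consequently the action of $c_{ij}$ on $\tilde T_k\cong I_k/J_k$ coincides with that of $\ad a_{ij}$, and hence the action of $(\ad B_i)\oplus R_i$ coincides with that of $B_i\oplus\tilde R_i$ through the $L$-module structure. From here each property falls out:

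\textbf{Property 1.} Since $\ad B\subseteq\tilde B_0$, $\tilde T_k$ is a $B$-submodule. For irreducibility as $\tilde B$-submodule with $G$-action, Property~6 of Lemma~\ref{LemmaChooseReduct} plus the previous remark imply $\ad B_i$ and $R_i$ act trivially on $\tilde T_k$ for $i>k$, so the $\tilde B$-action reduces to that of $\bigoplus_{i\leq k}(B_i\oplus\tilde R_i)$, and Property~5 of Lemma~\ref{LemmaChooseReduct} gives irreducibility. \textbf{Property 4.} Immediate from Property~6 of Lemma~\ref{LemmaChooseReduct} plus the vanishing of $d_{ij}$. \textbf{Property 2.} Complete reducibility as $(\ad B_k)\oplus R_k$-module with $G$-action is the second assertion of Lemma~\ref{LemmaBcReducible} applied to $\tilde T_k\subseteq L$. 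Faithfulness: if $\ad b+\sum\alpha_{kj}c_{kj}$ annihilates $\tilde T_k$, then $b+\sum\alpha_{kj}a_{kj}\in B_k\oplus\tilde R_k$ annihilates $I_k/J_k$, hence lies in $(B_k\oplus\tilde R_k)\cap\Ann(I_k/J_k)$; by Property~4 of Lemma~\ref{LemmaChooseReduct} this forces the element into $N$, but $(B\oplus S)\cap N=0$, so it is $0$, giving $b=0$ and all $\alpha_{kj}=0$. \textbf{Property 3.} Faithfulness of $\ad$ on $B$ (which is semisimple) gives $\dim\ad B_k=\dim B_k$; faithfulness of $\ad$ on $S$ (since $Z(L)\subseteq N$ and $S\cap N=0$) together with the previous faithfulness on $\tilde T_k$ gives linear independence of $c_{k1},\dots,c_{k,k_k}$, so $\dim R_k=k_k=\dim\tilde R_k$, and summing over $k$ reduces to Property~3 of Lemma~\ref{LemmaChooseReduct}.

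The only delicate point I foresee is Property~3, namely getting $\dim R_k=\dim\tilde R_k$: one must be sure that the replacement of $\ad a_{kj}$ by its semisimple part $c_{kj}$ does not collapse dimension. The argument above, going through faithful action on $I_k/J_k$ together with the fact that $d_{ij}$ annihilates $I_k/J_k$, circumvents this; everything else is bookkeeping against Lemmas~\ref{LemmaChooseReduct}, \ref{LemmaJordanDecomp}, and~\ref{LemmaBcReducible}.
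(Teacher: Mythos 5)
Your proposal is correct and follows essentially the same route as the paper's own proof: choose $\tilde T_k$ as a $G$-invariant $\tilde B_0$-complement of $J_k$ in $I_k$ via Lemma~\ref{LemmaBcReducible}, observe that $d_{ij}\in J(A)$ annihilates the irreducible $A$-module $I_k/J_k$ so that $c_{ij}$ and $\ad a_{ij}$ act identically there, and then read off the four properties from the corresponding items of Lemma~\ref{LemmaChooseReduct}. Your write-up is slightly more explicit (spelling out why $I_k,J_k$ are $\tilde B_0$-submodules, and unpacking the faithfulness argument via $(B_k\oplus\tilde R_k)\cap N=0$), but these are the same steps the paper takes in compressed form.
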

\begin{proof}
By Lemma~\ref{LemmaBcReducible}, $L$ is a completely reducible $\tilde B_0$-module
with $G$-action.
  Therefore, for every $J_k$ we can choose
  a complementary $G$-invariant $\tilde B_0$-submodules
  $\tilde T_k$ in $I_k$. Then $\tilde T_k$ are both $B$- and $\tilde B$-submodules.

Note that $(\ad a_{ij})w=c_{ij}w$ for all $w \in I_k/J_k$
since $I_k/J_k$ is an irreducible $A$-module and $J(A)\,I_k/J_k = 0$.
Hence, by Lemma~\ref{LemmaChooseReduct}, $I_k/J_k$ is a
 faithful $(\ad B_k)\oplus R_k$-module,
  $R_i\, I_k/J_k = 0$ for $i > k$
and the elements $c_{ij}$ are linearly independent.
Moreover, by Property 5 of Lemma~\ref{LemmaChooseReduct},
 $I_k/J_k$ is an irreducible $\left(\sum_{i=1}^r
 (B_i\oplus \tilde R_i)\oplus N
\right)/N$-module with $G$-action.
However
$\left(\sum_{i=1}^r (B_i\oplus \tilde R_i)\oplus N
\right)/N$ acts on $I_k/J_k$
by the same operators as $\tilde B$. Thus
 $\tilde T_k \cong I_k/J_k$  is an irreducible $\tilde B$-module
 with $G$-action. Property 1 is proved. By Lemma~\ref{LemmaBcReducible}, $L$ is a completely reducible $(\ad B_k)\oplus R_k$-module
with $G$-action for any $1 \leqslant k \leqslant r$. Using the isomorphism $\tilde T_k \cong I_k/J_k$, we obtain Properties 2 and 4 from the remarks above.
Property 3 is a consequence of Property 3 of Lemma~\ref{LemmaChooseReduct}.
\end{proof}

\begin{lemma}\label{ChooseSubmodule} For all $1 \leqslant k \leqslant r$
we have
 $$\tilde T_k = T_{k1} \oplus T_{k2} \oplus \dots
\oplus T_{km}$$ where $T_{kj}$ are faithful irreducible
$(\ad B_k)\oplus R_k$-submodules with $G$-action, $m \in \mathbb N$,
$1 \leqslant j \leqslant m$.
\end{lemma}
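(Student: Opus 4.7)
My plan is to first apply complete reducibility to decompose $\tilde T_k$ and then verify faithfulness of each summand by an annihilator argument that exploits the $\tilde B$-irreducibility of $\tilde T_k$.

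First I would invoke Property~2 of Lemma~\ref{LemmaSiProperties}, which tells me that $\tilde T_k$ is a completely reducible $L_0$-module with $G$-action, where $L_0:=(\ad B_k)\oplus R_k$. So I immediately decompose $\tilde T_k = T_{k1}\oplus\dots\oplus T_{km}$ into irreducible $L_0$-submodules with $G$-action. It remains only to verify that each $T_{kj}$ is faithful as an $L_0$-module.

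Fix $j$ and let $I := \Ann_{L_0}(T_{kj})$. Since $T_{kj}$ is $G$-invariant, the ideal $I$ is itself $G$-invariant: indeed, if $a\in I$, then for any $g\in G$ and $v\in T_{kj}$ one has $a^g v = (a v^{g^{-1}})^g = 0$, using that $v^{g^{-1}}\in T_{kj}$. I would then consider the subspace $V := \{v\in\tilde T_k \mid I\cdot v = 0\}$, which contains $T_{kj}$ and hence is nonzero. Recalling that $\tilde B = L_0\oplus L_0'$ where $L_0' := \bigoplus_{i\ne k}((\ad B_i)\oplus R_i)$ commutes with $L_0$, I observe that $V$ is $L_0$-invariant (because $I$ is an ideal, so $I\cdot(a v)\subseteq [I,a]\cdot v + a\cdot(I v)= 0$ for $a\in L_0$), $L_0'$-invariant (because $[L_0,L_0']=0$ lets one commute $b\in L_0'$ past $I$), and $G$-invariant (because $I$ is $G$-invariant, so $a(gv)=g(a^{g^{-1}}v)=0$ for $a\in I$). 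Thus $V$ is a nonzero $\tilde B$-submodule with $G$-action of $\tilde T_k$.

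By Property~1 of Lemma~\ref{LemmaSiProperties}, $\tilde T_k$ is irreducible as a $\tilde B$-submodule with $G$-action, so the nonzero $(\tilde B,G)$-invariant subspace $V$ must equal $\tilde T_k$. Therefore $I\cdot\tilde T_k = 0$, and by the faithfulness of $\tilde T_k$ as an $L_0$-module (Property~2 of Lemma~\ref{LemmaSiProperties}), we conclude $I = 0$. Hence every $T_{kj}$ is faithful. The one delicate step is the $G$-invariance of $V$, which reduces to the $G$-invariance of $I=\Ann_{L_0}(T_{kj})$; once one notices that a $G$-invariant annihilator ideal of $L_0$ automatically produces a $(\tilde B, G)$-invariant annihilator subspace of $\tilde T_k$, the rest is immediate from the irreducibility built in during the preceding lemmas.
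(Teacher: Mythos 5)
Your proof is correct and rests on exactly the same pillars as the paper's: Property~2 of Lemma~\ref{LemmaSiProperties} to get the decomposition, Property~1 to exploit $\tilde B$-irreducibility with $G$-action, faithfulness of $\tilde T_k$ over $(\ad B_k)\oplus R_k$, and the commutation $[(\ad B_k)\oplus R_k,\ \tilde B_k]=0$. The only difference is a dual framing: the paper fixes one nonzero annihilating element $b$ and shows that the $(\tilde B,G)$-submodule $\widehat B_k T_{kj}$ fills up $\tilde T_k$ and is killed by $b$, whereas you take the full annihilator ideal $I$ and show its joint kernel $V$ is a $(\tilde B,G)$-submodule filling up $\tilde T_k$, forcing $I=0$ --- a slightly cleaner, non-contradiction version of the same argument.
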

\begin{proof}
By Lemma~\ref{LemmaSiProperties}, Property 2,
$\tilde T_k = T_{k1} \oplus T_{k2} \oplus \dots
\oplus T_{km}$ for some irreducible
$(\ad B_k)\oplus R_k$-submodules with $G$-action.
Suppose $T_{kj}$ is not faithful for some $1 \leqslant j \leqslant m$.
Hence $b T_{kj}=0$ for some $b \in (\ad B_k)\oplus R_k$,
$b \ne 0$.
Note that $\tilde B = ((\ad B_k)\oplus R_k) \oplus \tilde B_k$
where $$\tilde B_k := \bigoplus_{i\ne k} (\ad B_i )\oplus
\bigoplus_{i\ne k} R_i$$ and $[(\ad B_k)\oplus R_k, \tilde B_k]=0$.
Denote by $\widehat B_k$ the associative subalgebra
of $\End_F(\tilde T_k)$ with $1$
generated by operators from $\tilde B_k$.
Then $$[(\ad B_k)\oplus R_k, \widehat B_k]=0$$ and $\sum_{a \in \widehat B_k}
a T_{kj} \supseteq T_{kj}$ is a $G$-invariant $\tilde B$-submodule
of $\tilde T_k$ since $$\left(\sum_{a \in \widehat B_k}
a T_{kj}\right)^g = \sum_{a \in \widehat B_k}
a^g T_{kj}^g = \sum_{a \in \widehat B_k}
a^g T_{kj} = \sum_{a' \in \widehat B_k}
a' T_{kj}$$ for all $g\in G$. Thus
$ \tilde T_k = \sum_{a \in \widehat B_k}
a T_{kj}$ and $$b \tilde T_k
= \sum_{a \in \widehat B_k}
ba T_{kj} = \sum_{a \in \widehat B_k}
a (bT_{kj})=0.$$ We get a contradiction with faithfulness
of $\tilde T_{k}$.
\end{proof}

 By Condition~2 of the definition of $d$,
 there exist numbers $q_1, \ldots, q_{r} \in \mathbb Z_+$
such that
$$[[\tilde T_1, \underbrace{L, \ldots, L}_{q_1}], [\tilde T_2, \underbrace{L, \ldots, L}_{q_2}] \ldots, [\tilde T_r,
 \underbrace{L, \ldots, L}_{q_r}]] \ne 0$$
 Choose $n_i \in \mathbb Z_+$ with the maximal $\sum\limits_{i=1}^r n_i$ such that
$$[[\left(\prod_{k=1}^{n_1} j_{1k}\right)\tilde T_1, \underbrace{L, \ldots, L}_{q_1}],
 [\left(\prod_{k=1}^{n_2} j_{2k}\right) \tilde T_2, \underbrace{L, \ldots, L}_{q_2}] \ldots, [\left(\prod_{k=1}^{n_r} j_{rk}\right) \tilde T_r,
 \underbrace{L, \ldots, L}_{q_r}]] \ne 0
$$ for some $j_{ik}\in J(A)$.
Let $j_i := \prod_{k=1}^{n_i} j_{ik}$.
Then $j_i  \in J(A) \cup \{1\}$ and
$$[[j_1 \tilde T_1, \underbrace{L, \ldots, L}_{q_1}], [j_2 \tilde T_2, \underbrace{L, \ldots, L}_{q_2}], \ldots, [j_r \tilde T_r,
 \underbrace{L, \ldots, L}_{q_r}]] \ne 0,
 $$ but
\begin{equation}\label{EquationJZero}
[[j_1 \tilde T_1, \underbrace{L, \ldots, L}_{q_1}],
\ldots, [j_k (j \tilde T_k), \underbrace{L, \ldots, L}_{q_k}], \ldots, [j_r \tilde T_r,
 \underbrace{L, \ldots, L}_{q_r}]] = 0
\end{equation}
for all $j \in J(A)$ and $1 \leqslant k \leqslant r$.

   In virtue of Lemma~\ref{ChooseSubmodule}, for every $k$ we can choose a
   faithful irreducible
 $(\ad B_k)\oplus R_k$-submodule with $G$-action
$T_k \subseteq \tilde T_k$
such that\begin{equation}\label{EquationqNonZero}
[[j_1 T_1, \underbrace{L, \ldots, L}_{q_1}], [j_2 T_2, \underbrace{L, \ldots, L}_{q_2}] \ldots, [j_r T_r, \underbrace{L, \ldots, L}_{q_r}]] \ne 0.
\end{equation}

\begin{lemma}\label{LemmaChange}
Let $\psi \colon \bigoplus_{i=1}^r(B_i \oplus \tilde R_i) \to
\bigoplus_{i=1}^r((\ad B_i)\oplus R_i) $
be the linear isomorphism defined by formulas $\psi(b)= \ad b$ for
all $b \in B_i$ and $\psi(a_{i\ell})=c_{i\ell}$, $1 \leqslant \ell
\leqslant k_\ell$.
Let $f_i$ be multilinear associative $G$-polynomials,
$h^{(i)}_1, \ldots, h^{(i)}_{n_i}
\in \bigoplus_{i=1}^r B_i \oplus \tilde R_i$, $\bar t_i \in \tilde T_i$, $\bar u_{ik}\in L$,
 be some elements.
Then
$$[[j_1 f_1(\ad h^{(1)}_1, \ldots, \ad h^{(1)}_{n_1})
 \bar t_1, \bar u_{11}, \ldots, \bar u_{1q_1}],  \ldots, [j_r
 f_r(\ad h^{(r)}_1, \ldots, \ad h^{(r)}_{n_r}) \bar t_r,
 \bar u_{r1}, \ldots, \bar u_{rq_r}]]=$$ $$
 [[j_1 f_1(\psi (h^{(1)}_1), \ldots, \psi (h^{(1)}_{n_1}))
 \bar t_1, \bar u_{11}, \ldots, \bar u_{1q_1}],  \ldots,$$ $$ [j_r
 f_r(\psi(h^{(r)}_1), \ldots, \psi (h^{(r)}_{n_r})) \bar t_r,
 \bar u_{r1}, \ldots, \bar u_{rq_r}]].$$ In other words, we can replace $\ad a_{i\ell}$ with $c_{i\ell}$ and the result does not change.
\end{lemma}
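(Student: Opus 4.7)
The plan is to expand both sides by multilinearity, identify the difference as a sum of ``error'' terms each carrying an extra factor from $J(A)$, and then invoke the maximality that defines the $j_i$'s to conclude every error vanishes.

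First I would use Lemma~\ref{LemmaJordanDecomp}: for every basis element $a_{ij} \in \tilde R_i$ we have $\ad a_{ij} - c_{ij} = d_{ij} \in J(A)$. Since $\psi$ acts as $\ad$ on the $B$-components and sends each $a_{ij}$ to $c_{ij}$, linearity gives $\ad h^{(i)}_k - \psi(h^{(i)}_k) \in J(A)$ for every $i,k$. Because $f_i$ is a multilinear \emph{associative} polynomial and $J(A)$ is a two-sided ideal of $A$, a standard telescoping expansion yields
\[
f_i(\ad h^{(i)}_1, \ldots, \ad h^{(i)}_{n_i}) = f_i(\psi(h^{(i)}_1), \ldots, \psi(h^{(i)}_{n_i})) + \eta_i, \quad \eta_i \in J(A).
\]
Note also that $f_i(\psi(h^{(i)}_1),\ldots,\psi(h^{(i)}_{n_i})) \in \tilde B_0$, and $\tilde B_0$ preserves $\tilde T_i$ by Lemma~\ref{LemmaSiProperties}, so $f_i(\psi(h^{(i)}_1),\ldots,\psi(h^{(i)}_{n_i}))\,\bar t_i$ lies in $\tilde T_i$.

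Next I would substitute $f_i(\ad h^{(i)}_1,\ldots,\ad h^{(i)}_{n_i})\,\bar t_i = f_i(\psi(h^{(i)}_1),\ldots,\psi(h^{(i)}_{n_i}))\,\bar t_i + \eta_i \bar t_i$ into the left-hand side and expand by multilinearity of the Lie brackets, obtaining a sum indexed by subsets $S \subseteq \{1,\ldots,r\}$. The term for $S = \varnothing$ is the right-hand side, while the term for $S \ne \varnothing$ carries $\eta_i \bar t_i \in J(A)\,\tilde T_i$ at each position $i \in S$ and $f_i(\psi(h^{(i)}_1),\ldots,\psi(h^{(i)}_{n_i}))\,\bar t_i \in \tilde T_i$ at each position $i \notin S$. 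Hence each error term lies in a subspace obtained from the left-hand side of~(\ref{EquationJZero}) by multiplying $\tilde T_i$ on the left by an extra factor from $J(A)$ at each $i \in S$.

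The crucial point, and essentially the only obstacle, is to show that every such subspace with $S \ne \varnothing$ vanishes. This is a mild upgrade of~(\ref{EquationJZero}) from a single-position shift to simultaneous shifts at several positions: if such a subspace were nonzero, one could choose factors $\tilde j_i \in J(A)$ at the positions $i \in S$ that make the expression nonzero, which amounts to replacing $n_i$ by $n_i+1$ for each $i \in S$ and strictly increasing $\sum_i n_i$, contradicting the maximality used to define $j_i = \prod_k j_{ik}$. Once this extension of~(\ref{EquationJZero}) is in hand, every error term is zero, so the two sides of the lemma coincide.
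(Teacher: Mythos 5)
Your proof is correct and follows essentially the same route as the paper's one-line argument: rewrite $\ad a_{i\ell}=c_{i\ell}+d_{i\ell}$ with $d_{i\ell}\in J(A)$, expand by multilinearity, and kill the error terms using the maximality that defines the $j_i$. One small imprecision: $f_i(\psi(h^{(i)}_1),\ldots,\psi(h^{(i)}_{n_i}))$ does not lie in $\tilde B_0$ (which is a Lie subalgebra, not closed under associative products), but rather in the associative subalgebra generated by $\tilde B$ and $G$; since $\tilde T_i$ is a $G$-invariant $\tilde B$-submodule, the needed conclusion $f_i(\psi(\cdot))\bar t_i\in\tilde T_i$ still holds.

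A point worth flagging, and where you are more careful than the published proof: the paper justifies the vanishing of error terms by citing~(\ref{EquationJZero}), but that equation, as stated, inserts a $J(A)$-factor at a \emph{single} position $k$. Your expansion over subsets $S\subseteq\{1,\ldots,r\}$ correctly reveals that some error terms carry extra $J(A)$-factors at \emph{several} positions simultaneously; at the positions $i\notin S$ you do get $j_i\tilde T_i$ again, but for $|S|\geqslant 2$ equation~(\ref{EquationJZero}) does not directly apply. You correctly observe that one must go back to the maximality of $\sum n_i$ from which~(\ref{EquationJZero}) is derived: increasing $n_i$ by $1$ at each $i\in S$ strictly increases $\sum n_i$, forcing the bracket to vanish. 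This fills a gap the paper glosses over, so your version is actually the more complete argument.
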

\begin{proof}
We rewrite $\ad a_{i\ell}=c_{i\ell}+d_{i\ell}=\psi(a_i)+d_{i\ell}$ and use the multilinearity
of $f_i$. By~(\ref{EquationJZero}), terms with $d_{i\ell}$ vanish.
\end{proof}

Denote by $A_3 \subseteq \End_F(L)$ the linear span of products
of operators from $\ad L$ and $G$ such that each product
contains at least one element from $\ad L$.

\begin{lemma}\label{LemmaA3}
$J(A) \subseteq A_3$.
\end{lemma}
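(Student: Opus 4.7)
My plan is to show that $A_3$ is a two-sided ideal of $A$ and that $A/A_3$ is semisimple, which forces $J(A) \subseteq A_3$.

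First, I would show that $A_3$ is a two-sided ideal of $A$. Any generator of $A$ is either an element of $\ad L$ or of $\rho(G)$; multiplying a product that already contains at least one factor from $\ad L$ by any such generator (on either side) yields another product containing at least one factor from $\ad L$, hence lying in $A_3$. So $A \cdot A_3 + A_3 \cdot A \subseteq A_3$.

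Next, using the identity $\rho(g)\,\ad a = \ad(a^g)\,\rho(g)$ for $g \in G$ and $a \in L$ (already exploited in the proof of Lemma~\ref{LemmaJordanDecomp} via (\ref{EqGMoveRight})), one may collect all $\rho(g)$ factors to the right in any monomial generating $A$. Thus every element of $A$ can be written as a sum of monomials of the form $\ad(a_1^{h_1}) \cdots \ad(a_k^{h_k})\,\rho(g)$ with $k \geqslant 0$. The monomials with $k \geqslant 1$ lie in $A_3$, while those with $k = 0$ lie in $\rho(FG)$. Consequently $A = A_3 + \rho(FG)$, and the natural surjection $A \twoheadrightarrow A/A_3$ factors through $\rho(FG)$, exhibiting $A/A_3$ as a quotient of $\rho(FG)$.

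Since $G$ is finite and $F$ has characteristic $0$, Maschke's theorem gives that $FG$ is semisimple, hence so is its homomorphic image $\rho(FG)$, and so is the further quotient $A/A_3$. Therefore $J(A/A_3) = 0$. On the other hand, because $A$ is a finite-dimensional associative $F$-algebra, $J(A)$ is a nilpotent ideal of $A$, so its image $(J(A)+A_3)/A_3$ is a nilpotent (hence quasi-regular) ideal of $A/A_3$, forcing $(J(A)+A_3)/A_3 \subseteq J(A/A_3) = 0$. This yields $J(A) \subseteq A_3$, as claimed.

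The only mild subtlety is verifying that $A_3$ really is a two-sided ideal (as opposed to merely a subspace stable under left/right multiplication by the generators): this is immediate from the definition of $A_3$ as a linear span once one observes the stability under the generators. No deeper obstacle appears, since the argument is purely structural once the decomposition $A = A_3 + \rho(FG)$ is in hand.
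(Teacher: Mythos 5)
Your proof is correct and takes essentially the same route as the paper: the paper also observes that $A_3$ is a two-sided ideal of $A$ with $A = A_3 + \tilde A_3$ where $\tilde A_3$ (your $\rho(FG)$) is the subalgebra generated by $G$, concludes $A/A_3 \cong \tilde A_3/(\tilde A_3 \cap A_3)$ is a quotient of the semisimple algebra $FG$, and hence $J(A) \subseteq A_3$. You merely spell out the last implication (via nilpotence of $J(A)$) more explicitly than the paper does.
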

\begin{proof}
Note that $A_3$ is a $G$-invariant two-sided ideal of $A$
and $A_3 + \tilde A_3 = A$
where $\tilde A_3 \subseteq \End_F(L)$ is the associative subalgebra
generated by operators from $G$.
Thus $A/A_3 \cong \tilde A_3/(\tilde A_3 \cap A_3)$
is a semisimple algebra since $\tilde A_3$ is a homomorphic image
of the semisimple group algebra $FG$.
Thus $J(A) \subseteq A_3$.
\end{proof}

\begin{lemma}\label{LemmaAlt} If $d \ne 0$, then there exist a number $n_0 \in \mathbb N$ such that for every $n\geqslant n_0$
there exist disjoint subsets $X_1$, \ldots, $X_{2k} \subseteq \lbrace x_1, \ldots, x_n
\rbrace$, $k := \left[\frac{n-n_0}{2d}\right]$,
$|X_1| = \ldots = |X_{2k}|=d$ and a polynomial $f \in V^G_n \backslash
\Id^G(L)$ alternating in the variables of each set $X_j$.
\end{lemma}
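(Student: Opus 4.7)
The plan is to glue the alternating associative $G$-polynomials supplied by Theorem~\ref{TheoremAlternateFinal}, one from each reductive block $L_0^{(i)} := (\ad B_i)\oplus R_i$, into a single multilinear Lie $G$-polynomial built on the commutator framework of~(\ref{EquationqNonZero}), and then to collapse the $r$ individual alternations of sizes $\ell_1,\ldots,\ell_r$ at each position $j$ into a single alternation in a set of size $d = \ell_1+\cdots+\ell_r$.

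For each $1\leq i\leq r$ let $\ell_i := \dim L_0^{(i)}$. Theorem~\ref{TheoremAlternateFinal} applied to $L_0^{(i)}$ acting faithfully and irreducibly on $T_i$ yields a constant $T^{(i)}\in\mathbb Z_+$ and, for every $k\in\mathbb N$, an associative $G$-polynomial $\hat f_i \in Q^G_{\ell_i,\,2k,\,2k\ell_i + T^{(i)}}\setminus \Id^G(\varphi_i)$ alternating in $2k$ pairwise disjoint sets $X_j^{(i)}$ of $\ell_i$ variables each. I then assemble the Lie $G$-polynomial
$$ f := \bigl[[j_1\,\hat f_1(\ad\,\cdot)\,t_1,\ u_{11},\ldots,u_{1q_1}],\ \ldots,\ [j_r\,\hat f_r(\ad\,\cdot)\,t_r,\ u_{r1},\ldots,u_{rq_r}]\bigr], $$
expanding each $\hat f_i(\ad\,\cdot)\,t_i$ as a nested Lie commutator through Jacobi, and unfolding each $j_i\in J(A)\cup\{1\}$ through Lemma~\ref{LemmaA3} into a sum of monomials in $\ad L$ and $G$ operators (each introducing auxiliary formal variables to be substituted by fixed elements of $L$). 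This places $f$ in $V^G_{2kd+n'_0}$ for a constant $n'_0$ independent of $k$, with $f$ alternating in each $X_j^{(i)}$. With the variables of $X_j^{(i)}$ substituted by a fixed basis of $B_i\oplus \tilde R_i$, the $t_i$ by elements of $T_i$, and the remaining variables chosen appropriately, $f$ evaluates to a non-zero element of $L$: non-identity of $\hat f_i$ gives some substitution making $\hat f_i(\ldots)\,t_i\neq 0$ in $T_i$ (Lemma~\ref{LemmaChange} ensures $\ad \leftrightarrow c$ compatibility inside the big commutator), and~(\ref{EquationqNonZero}) secures non-vanishing of the outer commutator.

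Now set $X_j := \bigsqcup_{i=1}^r X_j^{(i)}$, so $|X_j|=d$ by Property~3 of Lemma~\ref{LemmaSiProperties}, and consider $f' := \Alt_{X_1}\cdots\Alt_{X_{2k}} f$. The crucial claim is that $f'$ evaluated at the substitution above equals $(\ell_1!\cdots\ell_r!)^{2k}\,f(\text{sub})\neq 0$. For a single $\Alt_{X_j}$, decompose $S_{X_j}$ along cosets of the Young subgroup $H := \prod_i S_{X_j^{(i)}}$: elements of $H$ contribute $|H|\,f(\text{sub})$ since $f$ already alternates in each $X_j^{(i)}$. For a coset representative $\sigma\notin H$, let $k$ be the smallest index with $\sigma(X_j^{(k)})\neq X_j^{(k)}$; since every $X_j^{(k')}$ with $k'<k$ is $\sigma$-invariant, some position $p\in X_j^{(k)}$ satisfies $\sigma(p)\in X_j^{(k')}$ with $k'>k$. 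Under the permuted substitution, the corresponding argument of $\hat f_k$ in the $k$-th slot becomes an element $b\in B_{k'}\oplus \tilde R_{k'}$; by Lemma~\ref{LemmaChange} we replace $\ad b$ with $\psi(b)\in(\ad B_{k'})\oplus R_{k'}\subseteq \tilde B$. Now $(\ad B_{k'})|_{\tilde T_k}=0$ by Property~4 of Lemma~\ref{LemmaSiProperties}, while $c_{k',\ell}|_{\tilde T_k}=0$ because $c_{k',\ell}|_{\tilde T_k}=-d_{k',\ell}|_{\tilde T_k}$ is simultaneously diagonalizable (as $c_{k',\ell}$ is the semisimple part in $\tilde B$, preserving $\tilde T_k$) and nilpotent (as a restriction of $d_{k',\ell}\in J(A)$), hence zero. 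Since $\hat f_k$ is multilinear in this variable, every monomial of $\hat f_k(\ldots)\,t_k$ vanishes, killing the $k$-th slot and hence the whole big commutator.

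Iterating over $j=1,\ldots,2k$ (the $\Alt_{X_j}$ commute on disjoint sets) produces $f'\in V^G_{2kd+n'_0}\setminus \Id^G(L)$ alternating in each $X_j$ of size $d$. For $n > 2kd + n'_0$ the deficit is at most $2d-1$; I pad $f'$ with that many outer left-normed brackets against fresh variables, using non-nilpotency of $L$ to furnish a chain of elements keeping the value non-zero while preserving the alternation structure on the $X_j$. Absorbing the padding into $n_0 := n'_0 + 2d-1$ and choosing $k = [(n-n_0)/(2d)]$ yields the lemma. The main obstacle is the alternation-collapse step: it depends critically on the triangular annihilation pattern $B_{k'}\tilde T_k = \tilde R_{k'}\tilde T_k = 0$ for $k'>k$ combined with Lemma~\ref{LemmaChange}, whose $\ad\leftrightarrow c$ trade converts the a priori uncontrolled $\ad\tilde R_{k'}$-action into a controlled $\tilde B$-action that vanishes on $\tilde T_k$.
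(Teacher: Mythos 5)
Your gluing strategy, the coset decomposition of $S_{X_j}$ along the Young subgroup $\prod_i S_{X_j^{(i)}}$, and the triangular annihilation argument $B_{k'}\tilde T_k = R_{k'}\tilde T_k = 0$ for $k' > k$ all mirror the paper's reasoning. But there is a genuine gap at the very start, in the non-vanishing of the \emph{unalternated} polynomial $f$. You write that ``non-identity of $\hat f_i$ gives some substitution making $\hat f_i(\ldots)\,t_i \ne 0$'' and that (\ref{EquationqNonZero}) ``secures non-vanishing of the outer commutator,'' but these two facts fix \emph{different} elements $\bar t_i$. Equation~(\ref{EquationqNonZero}) only asserts that there \emph{exist} $\bar t_i \in T_i$ and $\bar u_{ij} \in L$ making the outer commutator nonzero; with those $\bar t_i$ fixed, the map $T_i \ni t \mapsto [\ldots,[j_i t, \bar u_{i1},\ldots],\ldots]$ has, in general, a nontrivial kernel, and there is no reason the operator $\hat f_i(\ldots)\in\End_F(T_i)$ (which is merely nonzero, e.g.\ possibly rank one) should send $\bar t_i$ outside that kernel. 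The paper closes exactly this gap with a matrix-unit sandwich: using the density theorem it writes matrix units $e^{(i)}_{j\ell}\in\End_F(T_i)$ as polynomials in $(\ad B_i)\oplus R_i$ and $G$, and replaces $\hat f_i(\ldots)$ by $\sum_{\ell} e^{(i)}_{\ell\ell_i}\hat f_i(\ldots)e^{(i)}_{s_i\ell}$, which is a \emph{nonzero scalar} on $T_i$ and therefore sends the specific $\bar t_i$ from~(\ref{EquationqNonZero}) to a nonzero multiple of itself. Without this (or an equivalent scalarization device), the composite substitution can evaluate to zero and the whole construction fails. Note that Lemma~\ref{LemmaChange}, which you invoke at this spot, only handles the $\ad a_{ij}\leftrightarrow c_{ij}$ trade and says nothing about scalarity.

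A secondary concern is the length adjustment. Padding $f'$ with outer left-normed brackets $[f',z_1,\ldots,z_s]$ is not obviously safe: the element that $f'$ evaluates to could lie in the center of $L$ or in an abelian ideal, and non-nilpotency of $L$ gives no information about \emph{that particular} element. The paper avoids this by padding \emph{inside}: it replaces $\bar t_1$ with $\bigl(\sum_s e^{(1)}_{sj}be^{(1)}_{\ell s}\bigr)^{n-\tilde n}\bar t_1 = \mu\bar t_1$ ($\mu\ne 0$), then uses Lemma~\ref{LemmaA3} and~(\ref{EqGMoveRight}) to unfold this as a nested commutator against new variables of $L$. This keeps the value a nonzero multiple of the original and preserves the alternation structure exactly, whereas your outer padding would need an additional argument. (Your redundant derivation that $c_{k',\ell}|_{\tilde T_k}=0$ via simultaneously diagonalizable and nilpotent is unnecessary --- Property~4 of Lemma~\ref{LemmaSiProperties} already states $R_{k'}\tilde T_k = 0$ directly --- but that is harmless.)
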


\begin{proof}
Denote by $\varphi_i \colon (\ad B_i)\oplus R_i \to
\mathfrak{gl}(T_i)$ the representation
 corresponding to the action of $(\ad B_i)\oplus R_i$
on $T_i$.
In virtue of Theorem~\ref{TheoremAlternateFinal},
there exist constants $m_i \in \mathbb Z_+$
such that for any $k$ there exist
 multilinear polynomials $f_i \in Q^G_{d_i, 2k, 2k d_i+m_i}
  \backslash \Id^G(\varphi_i)$,
$d_i := \dim ((\ad B_i)\oplus R_i)$,
alternating in the variables from disjoint sets
$X^{(i)}_{\ell}$, $1 \leqslant \ell \leqslant 2k$, $|X^{(i)}_{\ell}|=d_i$.

In virtue of~(\ref{EquationqNonZero}),
$$[[j_1 \bar t_1, \bar u_{11}, \ldots, \bar u_{1,q_1}], [j_2 \bar t_2, \bar u_{21}, \ldots, \bar u_{2,q_2}],
 \ldots, [j_r \bar t_r, \bar u_{r1}, \ldots, \bar u_{r,q_r}]] \ne 0,
 $$
 for some $\bar u_{i\ell} \in L$ and $\bar t_i \in T_i$. All $j_i \in J(A)\cup \{1\}$ are polynomials in elements from $G$ and $\ad L$. Denote by $\tilde m$ the maximal degree of them.

 Recall that each $T_i$ is a faithful irreducible $(\ad B_i)\oplus R_i$-module
with $G$-action.
Therefore by the density theorem,
 $\End_F(T_i)$ is generated by operators from $G$
and $(\ad B_i)\oplus R_i$.
Note that $\End_F(T_i) \cong M_{\dim T_i}(F)$.
Thus every matrix unit $e^{(i)}_{j\ell} \in M_{\dim T_i}(F)$ can be
represented as a polynomial in operators from $G$
and $(\ad B_i)\oplus R_i$. Choose such polynomials
for all $i$ and all matrix units. Denote by $m_0$ the maximal degree of those
polynomials.

Let $n_0 := r(2m_0+\tilde m+1)+ \sum_{i=1}^r (m_i+q_i)$.
Now we choose $f_i$ for $k = \left[\frac{n-n_0}{2d}\right]$.
Since $f_i \notin \Id^G(\varphi_i)$,
there exist $\bar x_{i1}, \ldots, \bar x_{i, 2k d_i+m_i} \in (\ad B_i)\oplus R_i$
such that $f_i(\bar x_{i1}, \ldots, \bar x_{i, 2k d_i+m_i})\ne 0$.
Hence $$e^{(i)}_{\ell_i \ell_i} f_i(\bar x_{i1}, \ldots, \bar x_{i, 2k d_i+m_i})
e^{(i)}_{s_i s_i} \ne 0$$ for some matrix units $e^{(i)}_{\ell_i \ell_i},
e^{(i)}_{s_i s_i} \in \End_F(T_i)$, $1 \leqslant \ell_i, s_i \leqslant \dim {T_i}$.
Thus $$\sum_{\ell=1}^{\dim_{T_i}}
e^{(i)}_{\ell \ell_i} f_i(\bar x_{i1}, \ldots, \bar x_{i, 2k d_i+m_i})
 e^{(i)}_{s_i \ell}$$ is a nonzero scalar operator in $\End_F(T_i)$.

Hence
$$ [[j_1\left(\sum_{\ell=1}^{\dim {T_1}}
e^{(1)}_{\ell \ell_1} f_1(\bar x_{11}, \ldots, \bar x_{1,2k d_1+m_1})
 e^{(1)}_{s_1 \ell}\right)\bar t_1, \bar u_{11}, \ldots, \bar u_{1q_1}],
 \ldots, $$
 $$
 [j_r\left(\sum_{\ell=1}^{\dim {T_r}}
e^{(r)}_{\ell \ell_r} f_r(\bar x_{r1}, \ldots, \bar x_{r, 2k d_r+m_r})
 e^{(r)}_{s_r \ell}\right)\bar t_r, \bar u_{r1}, \ldots, \bar u_{rq_r}]]\ne 0.$$
Denote $X_\ell := \bigcup_{i=1}^{r} X^{(i)}_{\ell}$.
Let $\Alt_\ell$ be the operator of alternation
in the variables from $X_\ell$.
Consider
$$\tilde f(x_{11}, \ldots, x_{1, 2k d_1+m_1},
\ \ldots,\ x_{r1}, \ldots, x_{r, 2k d_r+m_r}) :=
$$ $$
 \Alt_1 \Alt_2 \ldots \Alt_{2k} [[j_1\left(\sum_{\ell=1}^{\dim {T_1}}
e^{(1)}_{\ell \ell_1} f_1(x_{11}, \ldots, x_{1, 2k d_1+m_1})
 e^{(1)}_{s_1 \ell}\right)\bar t_1, \bar u_{11}, \ldots, \bar u_{1q_1}],
 \ldots, $$
 $$
 [j_r\left(\sum_{\ell=1}^{\dim {T_r}}
e^{(r)}_{\ell \ell_r} f_r(x_{r1}, \ldots, x_{r,2k d_r+m_r})
 e^{(r)}_{s_r \ell}\right)\bar t_r, \bar u_{r1}, \ldots, \bar u_{rq_r}]].$$
Then
$$\tilde f(\bar x_{11}, \ldots, \bar x_{1, 2k d_1+m_1},
\ \ldots,\ \bar x_{r1}, \ldots, \bar x_{r, 2k d_r+m_r})
= $$ $$(d_1!)^{2k} \ldots (d_r!)^{2k} [[j_1\left(\sum_{\ell=1}^{\dim {T_1}}
e^{(1)}_{\ell \ell_1} f_1(\bar x_{11}, \ldots, \bar x_{1,2k d_1+m_1})
 e^{(1)}_{s_1 \ell}\right)\bar t_1, \bar u_{11}, \ldots, \bar u_{1q_1}],
 \ldots, $$
 $$
 [j_r\left(\sum_{\ell=1}^{\dim {T_r}}
e^{(r)}_{\ell \ell_r} f_r(\bar x_{r1}, \ldots, \bar x_{r, 2k d_r+m_r})
 e^{(r)}_{s_r \ell}\right)\bar t_r, \bar u_{r1},
  \ldots, \bar u_{rq_r}]]\ne 0.$$
since $f_i$ are alternating in each $X^{(i)}_{\ell}$
and, by Lemma~\ref{LemmaSiProperties}, $((\ad B_i)\oplus R_i)\tilde T_\ell = 0$
for $i > \ell$. Now we rewrite
$e^{(i)}_{\ell j}$ as polynomials in elements of $(\ad B_i)\oplus R_i$
and $G$.
Using linearity of $\tilde f$ in $e^{(i)}_{\ell j}$,
we can replace $e^{(i)}_{\ell j}$ with the products
of elements from $(\ad B_i)\oplus R_i$
and $G$, and the expression will not vanish
for some choice of the products. Using~(\ref{EqGMoveRight}),
we can move all $\rho(g)$ to the right.
 By Lemma~\ref{LemmaChange},
we can replace all elements from $(\ad B_i)\oplus R_i$
with elements from $B_i\oplus \tilde R_i$
and the expression will be still nonzero.
Denote by $\psi \colon \bigoplus_{i=1}^r (B_i \oplus \tilde R_i) \to
\bigoplus_{i=1}^r ((\ad B_i)\oplus R_i) $ the corresponding linear isomorphism.
Now we rewrite $j_i$ as polynomials in elements $\ad L$ and $G$.
Since $\tilde f$ is linear in $j_i$,
we can replace $j_i$ with one of the monomials,
i.e. with the product of elements from $\ad L$ and $G$.
Using~(\ref{EqGMoveRight}),
we again move all $\rho(g)$ to the right. Then
we replace the elements from $\ad L$ with new variables,
and
$$\hat f :=
 \Alt_1 \Alt_2 \ldots \Alt_{2k} \biggl[\Bigl[\Bigl[y_{11}, [y_{12}, \ldots
 [y_{1 \alpha_1}, \Bigl[z_{11}, [z_{12},
 \ldots, [z_{1 \beta_1},
$$ $$
  (f_1(\ad x_{11}, \ldots, \ad x_{1, 2k d_1+m_1}))^{g_1}
 [w_{11}, [w_{12}, \ldots, [w_{1 \gamma_1},
  t_1^{h_1}]\ldots \Bigr],
  u_{11}, \ldots, u_{1q_1}\Bigr],
 \ldots, $$
 $$\Bigl[\Bigl[y_{r1}, [y_{r2}, \ldots,
 [y_{r \alpha_r}, \Bigr[z_{r1},
  [z_{r2},
 \ldots, [z_{r \beta_r},
 $$ $$
 (f_r(\ad x_{r1}, \ldots, \ad x_{r, 2k d_r+m_r}))^{g_r}
 [w_{r1}, [w_{r2}, \ldots, [w_{r \gamma_r}, t_r^{h_r}]\ldots \Bigr],
  u_{r1}, \ldots, u_{rq_r}\Bigr]\biggr]$$
  for some  $0 \leqslant \alpha_i \leqslant \tilde m$,
  \quad
  $0 \leqslant \beta_i, \gamma_i \leqslant m_0$,
  \quad $g_{i}, h_i \in G$,\quad
  $\bar y_{i\ell}, \bar z_{i\ell},
  \bar w_{i\ell} \in L$
 does not vanish under the substitution
 $t_i=\bar t_i$, $u_{i\ell}=\bar u_{i\ell}$,
 $x_{i\ell}=\psi^{-1}(\bar x_{i\ell})$, $y_{i\ell}=\bar y_{i\ell}$,
 $z_{i\ell}=\bar z_{i\ell}$, $w_{i\ell}=\bar w_{i\ell}$.

Note that $\hat f \in V_{\tilde n}^G$,
  $\tilde n: = 2kd +r+ \sum_{i=1}^r (m_i + q_i + \alpha_i+\beta_i+\gamma_i)
  \leqslant n$. If $n=\tilde n$, then we take $f:=\hat f$.
  Suppose $n > \tilde n$.
  Let $b \in (\ad B_1) \oplus R_1$, $b\ne 0$.
Then $e^{(1)}_{jj} b e^{(1)}_{\ell\ell} \ne 0$
for some $1 \leqslant j,\ell \leqslant \dim T_1$
and $\left(\sum_{s=1}^{\dim T_1}(e^{(1)}_{sj} b
 e^{(1)}_{\ell s})\right)^{n-\tilde n}\bar t_1 =
\mu \bar t_1$, $\mu \in F\backslash \{0\}$.
Hence $\hat f$ does not vanish under the substitution
 $t_1 = \left(\sum_{s=1}^{\dim T_1}(e^{(1)}_{sj} b
 e^{(1)}_{\ell s})\right)^{n-\tilde n}\bar t_1$; $t_i=\bar t_i$ for $2 \leqslant i \leqslant r$; $u_{i\ell}=\bar u_{i\ell}$,
 $x_{i\ell}=\psi^{-1}(\bar x_{i\ell})$, $y_{i\ell}=\bar y_{i\ell}$,
 $z_{i\ell}=\bar z_{i\ell}$, $w_{i\ell}=\bar w_{i\ell}$.

 By Lemma~\ref{LemmaA3},
  $$b \in J(A) \oplus \ad (B_1 \oplus \tilde R_1)\subseteq A_3$$
  and using~(\ref{EqGMoveRight})
 we can rewrite $\left(\sum_{s=1}^{\dim T_1}(e^{(1)}_{sj} b
 e^{(1)}_{\ell s})\right)^{n-\tilde n}\bar t_1$
 as a sum of elements $[\bar v_1, [\bar v_2, [\ldots,
  [\bar v_q,  \bar t_1^g]\ldots]$,
 $q \geqslant n-\tilde n$, $\bar v_i  \in L$, $g\in G$.
 Hence $\hat f$ does not vanish under a substitution
 $t_1 = [\bar v_1, [\bar v_2, [\ldots, [\bar v_q,  \bar t_1^g]\ldots]$
 for some $q \geqslant n-\tilde n$, $\bar v_i \in L$, $g\in G$;
  $t_i=\bar t_i$ for $2 \leqslant i \leqslant r$; $u_{i\ell}=\bar u_{i\ell}$,
 $x_{i\ell}=\psi^{-1}(\bar x_{i\ell})$, $y_{i\ell}=\bar y_{i\ell}$,
 $z_{i\ell}=\bar z_{i\ell}$, $w_{i\ell}=\bar w_{i\ell}$.
Therefore, $$f :=
 \Alt_1 \Alt_2 \ldots \Alt_{2k} \biggl[\Bigl[\Bigl[y_{11}, [y_{12}, \ldots
 [y_{1 \alpha_1}, \Bigl[z_{11}, [z_{12},
 \ldots, [z_{1 \beta_1},
$$ $$
  (f_1(\ad x_{11}, \ldots, \ad x_{1, 2k d_1+m_1}))^{g_1}
 [w_{11}, [w_{12}, \ldots, [w_{1 \gamma_1},
  $$
 $$\bigl[v_1^{h_1}, [v_2^{h_1}, [\ldots, [v_{n-\tilde n}^{h_1}, t_1^{h_1}]\ldots\bigr]\ldots \Bigr],
  u_{11}, \ldots, u_{1q_1}\Bigr],$$
 $$
  \Bigl[\Bigl[y_{21}, [y_{22}, \ldots
 [y_{2 \alpha_2}, \Bigl[z_{21}, [z_{22},
 \ldots, [z_{2 \beta_2},
$$ $$
  (f_2(\ad x_{21}, \ldots, \ad x_{2, 2k d_2+m_2}))^{g_2}
 [w_{21}, [w_{22}, \ldots, [w_{2 \gamma_2},
  t_2^{h_2}]\ldots \Bigr],
  u_{21}, \ldots, u_{2q_2}\Bigr],  $$
$$
 \ldots, \Bigl[\Bigl[y_{r1}, [y_{r2},\ldots,
 [y_{r \alpha_r}, \Bigr[z_{r1},
  [z_{r2},
 \ldots, [z_{r \beta_r},
 $$ $$
 (f_r(\ad x_{r1}, \ldots, \ad x_{r, 2k d_r+m_r}))^{g_r}
 [w_{r1}, [w_{r2}, \ldots, [w_{r \gamma_r}, t_r^{h_r}]\ldots \Bigr],
  u_{r1}, \ldots, u_{rq_r}\Bigr]\biggr]$$
  does not vanish under the substitution
  $v_\ell = \bar v_\ell$, $1 \leqslant \ell \leqslant n-\tilde n$,
  $t_1 = [\bar v_{n-\tilde n +1}, [\bar v_{n-\tilde n +2}, [\ldots, [\bar v_q,  \bar t_1^g]\ldots]$;
  $t_i=\bar t_i$ for $2 \leqslant i \leqslant r$; $u_{i\ell}=\bar u_{i\ell}$,
 $x_{i\ell}=\psi^{-1}(\bar x_{i\ell})$, $y_{i\ell}=\bar y_{i\ell}$,
 $z_{i\ell}=\bar z_{i\ell}$, $w_{i\ell}=\bar w_{i\ell}$.
 Note that $f \in V_n^G$ and satisfies all the conditions of the lemma.
\end{proof}

\begin{lemma}\label{LemmaCochar} Let
 $k, n_0$ be the numbers from
Lemma~\ref{LemmaAlt}.   Then for every $n \geqslant n_0$ there exists
a partition $\lambda = (\lambda_1, \ldots, \lambda_s) \vdash n$,
$\lambda_i \geqslant 2k-C$ for every $1 \leqslant i \leqslant d$,
with $m(L, G, \lambda) \ne 0$.
Here $C := p((\dim L)p + 3)((\dim L)-d)$ where $p \in \mathbb N$ is such number that $N^p=0$.
\end{lemma}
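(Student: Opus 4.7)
The plan is to use the polynomial produced by Lemma~\ref{LemmaAlt} as a witness: an $f\in V^G_n\setminus\Id^G(L)$ that alternates in $2k$ disjoint $d$-subsets $X_1,\dots,X_{2k}$. I will locate a suitable $\lambda$ directly from the $S_n$-isotypic decomposition of the image $\bar f$ of $f$ in $V^G_n/(V^G_n\cap\Id^G(L))$, and then use the Littlewood--Richardson rule combined with the height bound from Lemma~\ref{LemmaUpper} to force each of $\lambda_1,\dots,\lambda_d$ to be at least $2k-C$.

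First I would decompose $\bar f=\sum_\lambda \bar f_\lambda$ along the isotypic components $W_\lambda\cong M(\lambda)^{\oplus m(L,G,\lambda)}$ of $V^G_n/(V^G_n\cap\Id^G(L))$. Because each central idempotent $P_\lambda\in FS_n$ commutes with the alternators $\Alt_{X_i}=\sum_{\sigma\in S_{X_i}}(\sign \sigma)\,\sigma$, every projection $\bar f_\lambda$ remains alternating in each $X_i$. Pick any $\lambda$ with $\bar f_\lambda\ne 0$; then $m(L,G,\lambda)\ne 0$, so by Lemma~\ref{LemmaUpper} the diagram $D_\lambda$ has at most $\dim L$ rows and $\lambda_{d+1}<p((\dim L)p+3)$, whence
\[
\sum_{i>d}\lambda_i\leq(\dim L-d)\,p((\dim L)p+3)=C.
\]

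Next I would translate the alternation condition into Littlewood--Richardson data. Restricting to the Young subgroup $S_{X_1}\times\cdots\times S_{X_{2k}}\times S_Y$, where $Y$ collects the remaining $n-2kd$ variables, the nonvanishing of $\bar f_\lambda$ forces the multiplicity $c^\lambda_{(1^d),\ldots,(1^d),\nu}$ to be nonzero for some $\nu\vdash n-2kd$. By iterated application of the column Pieri rule, this means $D_\lambda$ is obtained from $D_\nu$ by successively attaching $2k$ vertical strips of size $d$. Writing $e_i$ for the number of attached cells in row $i$, we have $\lambda_i\geq e_i$, $\sum_i e_i=2kd$, and $e_i\leq 2k$ since any vertical strip contributes at most one cell to each row.

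A pigeonhole step then finishes the argument: the height bound yields $\sum_{i>d}e_i\leq\sum_{i>d}\lambda_i\leq C$, so $\sum_{i\leq d}e_i\geq 2kd-C$, and for each $j\leq d$,
\[
e_j\geq 2kd-C-\sum_{\substack{i\leq d\\ i\ne j}}e_i\geq 2kd-C-2k(d-1)=2k-C,
\]
whence $\lambda_j\geq 2k-C$ as required. The main obstacle I expect is the middle step: one must carefully justify that alternation in the $2k$ sets $X_i$ corresponds (via Frobenius reciprocity and the column Pieri rule) exactly to the vertical-strip condition, and then check that combining this with Lemma~\ref{LemmaUpper} genuinely upgrades the obvious sum bound $\sum_{i\leq d}\lambda_i\geq n-C$ into an individual per-row bound via the cap $e_j\leq 2k$.
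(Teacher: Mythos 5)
Your proof is correct, but it takes a genuinely different route from the paper's. The paper decomposes $FS_n f = \sum_{\lambda, T_\lambda} FS_n e^{*}_{T_\lambda} f$ over standard tableaux, picks $T_\lambda$ with $e^{*}_{T_\lambda}f \notin \Id^G(L)$, and then argues purely combinatorially: since $a_{T_\lambda}$ symmetrizes each row and $f$ alternates in each $X_i$, no row of $T_\lambda$ can contain two indices from the same $X_i$ (otherwise $a_{T_\lambda}f=0$). This gives $\sum_{i<d}\lambda_i \leqslant 2k(d-1) + (n-2kd) = n-2k$, which combined with $\sum_{i\leqslant d}\lambda_i \geqslant n - C$ from Lemma~\ref{LemmaUpper} yields $\lambda_d \geqslant 2k-C$, and monotonicity of partitions finishes. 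You instead project $\bar f$ onto its isotypic components via central idempotents (noting these commute with the alternators), then invoke Frobenius reciprocity and the column Pieri rule to deduce that $\lambda$ is built from some $\nu$ by $2k$ vertical $d$-strips, and close with a pigeonhole on the per-row attachment counts $e_i \leqslant 2k$. Both arguments rest on the same underlying fact --- $2k$ alternating $d$-sets force at most $2k$ cells per row to "come from" the alternated variables --- but the paper's version is expressed at the level of Young symmetrizer combinatorics and is more self-contained, whereas yours uses the branching/LR machinery (which the paper already sets up in Section~\ref{SectionMult}) and gives the per-row bound $\lambda_j \geqslant 2k - C$ for all $j \leqslant d$ directly rather than via $\lambda_d$ and monotonicity. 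Both are valid; the paper's is shorter and requires less overhead at this point in the argument.
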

\begin{proof}
Consider the polynomial $f$ from Lemma~\ref{LemmaAlt}.
It is sufficient to prove that $e^*_{T_\lambda} f \notin \Id^G(L)$
for some tableau $T_\lambda$ of the desired shape $\lambda$.
It is known that $FS_n = \bigoplus_{\lambda,T_\lambda} FS_n e^{*}_{T_\lambda}$ where the summation
runs over the set of all standard tableax $T_\lambda$,
$\lambda \vdash n$. Thus $FS_n f = \sum_{\lambda,T_\lambda} FS_n e^{*}_{T_\lambda}f
\not\subseteq \Id^G(L)$ and $e^{*}_{T_\lambda} f \notin \Id^G(L)$ for some $\lambda \vdash n$.
We claim that $\lambda$ is of the desired shape.
It is sufficient to prove that
$\lambda_d \geqslant 2k-C$, since
$\lambda_i \geqslant \lambda_d$ for every $1 \leqslant i \leqslant d$.
Each row of $T_\lambda$ includes numbers
of no more than one variable from each $X_i$,
since $e^{*}_{T_\lambda} = b_{T_\lambda} a_{T_\lambda}$
and $a_{T_\lambda}$ is symmetrizing the variables of each row.
Thus $\sum_{i=1}^{d-1} \lambda_i \leqslant 2k(d-1) + (n-2kd) = n-2k$.
In virtue of Lemma~\ref{LemmaUpper},
$\sum_{i=1}^d \lambda_i \geqslant n-C$. Therefore
$\lambda_d \geqslant 2k-C$.
\end{proof}

\begin{proof}[Proof of Theorem~\ref{TheoremMain}]
The Young diagram~$D_\lambda$ from Lemma~\ref{LemmaCochar} contains
the rectangular subdiagram~$D_\mu$, $\mu=(\underbrace{2k-C, \ldots, 2k-C}_d)$.
The branching rule for $S_n$ implies that if we consider the restriction of
$S_n$-action on $M(\lambda)$ to $S_{n-1}$, then
$M(\lambda)$ becomes the direct sum of all non-isomorphic
$FS_{n-1}$-modules $M(\nu)$, $\nu \vdash (n-1)$, where each $D_\nu$ is obtained
from $D_\lambda$ by deleting one box. In particular,
$\dim M(\nu) \leqslant \dim M(\lambda)$.
Applying the rule $(n-d(2k-C))$ times, we obtain $\dim M(\mu) \leqslant \dim M(\lambda)$.
By the hook formula, $$\dim M(\mu) = \frac{(d(2k-C))!}{\prod_{i,j} h_{ij}}$$
where $h_{ij}$ is the length of the hook with edge in $(i, j)$.
By Stirling formula,
$$c_n^G(L)\geqslant \dim M(\lambda) \geqslant \dim M(\mu) \geqslant \frac{(d(2k-C))!}{((2k-C+d)!)^d}
\sim $$ $$\frac{
\sqrt{2\pi d(2k-C)} \left(\frac{d(2k-C)}{e}\right)^{d(2k-C)}
}
{
\left(\sqrt{2\pi (2k-C+d)}
\left(\frac{2k-C+d}{e}\right)^{2k-C+d}\right)^d
} \sim C_9 k^{r_9} d^{2kd}$$
for some constants $C_9 > 0$, $r_9 \in \mathbb Q$,
as $k \to \infty$.
Since $k = \left[\frac{n-n_0}{2d}\right]$,
this gives the lower bound.
The upper bound has been proved in Theorem~\ref{TheoremUpper}.
\end{proof}

\section{Acknowledgements}

I am grateful to Yuri Bahturin and Mikhail Kotchetov for helpful
discussions.

\end{document}